\newcommand\bb{\mathbb}
\newcommand\End{\text{End}}
\DeclareMathOperator{\GL}{GL}
\DeclareMathOperator{\Id}{Id}
\newcommand\dif{\text{dif}}
\newcommand\inv{\text{inv}}
\newcommand\NCP{\text{NCP}}
\newcommand\boxconv{{~\boxed{\!*\!}|~}}
\newcommand\boxconvred{{~\overline{\boxed{\!*\!}}~}}
\newcommand\boxconvredred{{~|\boxed{\!*\!}|~}}
\newcommand\boxconvline{{~{|\boxed{\!*\!}}~}}
\newcommand\Mult{\text{Mult}}
\newcommand\PT{\text{PT}}
\newcommand\ST{\text{ST}}
\newcommand\RST{\text{RST}}
\newcommand\LST{\text{LST}}
\newcommand\rot{\text{rot}}
\newcommand\Edel{\text{Edel}}
\newcommand\Prod{\text{Prod}}
\newcommand\gaps{\text{gaps}}
\newcommand\NDPF{\text{NDPF}}
\newcommand\onet{\begin{tikzpicture} \draw[fill](0,0) circle (.05); \end{tikzpicture}} 
\newcommand\children[3]{
\begin{tikzpicture}[scale = .8, baseline={(0,0)}]
\draw[fill](0,0) circle (.05);
\draw (-.3,.3) -- (0,0) -- (.3,.3) (0,0)--(0,-.3);
\draw (-.4,.4) node{$#1$};
\draw (.4,.4) node{$#3$};
\draw (0.18,-0.2) node{$#2$};
\end{tikzpicture}
}
\newcommand\tunder[1]{{\begin{tikzpicture}[scale = .25, baseline={(0,0)}] \draw (0,-.5)--(0,0)--(.5,.5) (0,0)--(-.5,.5); \draw (0,.8) node[right]{$#1$}; \end{tikzpicture}}}
\newcommand\tover[1]{{\begin{tikzpicture}[scale = .25, baseline={(0,0)}] \draw (0,-.5)--(0,0)--(.5,.5) (0,0)--(-.5,.5); \draw (0,.8) node[left]{$#1$}; \end{tikzpicture}}}
\newcommand\tovunder[2]{{\begin{tikzpicture}[scale = .3, baseline={(0,0)}] \draw (0,-.5)--(0,0)--(.5,.5) (0,0)--(-.5,.5); \draw (0,0) node[above left]{$#1$}; \draw (0,0) node[above right]{$#2$}; \end{tikzpicture}}}
\newcommand\tovovunder[3]{{\begin{tikzpicture}[scale = .3, baseline={(0,0.1)}] \draw (0,-.5)--(0,0)--(.5,.5) (0,0)--(-1.5,1.5) (-1,1)--(-.5,1.5); \draw (-1,1.9) node[left]{$#1$}; \draw (-1,1.9) node[right]{$#2$}; \draw (0,.7) node[right]{$#3$}; \end{tikzpicture}}}
\newcommand\trecb[2]{\tovovunder{}{#1}{#2}}
\newcommand\toverrightcomb[3]{{\begin{tikzpicture}[scale = .3, baseline={(0,0.4)}] \draw (0,-.5)--(0,0)--(3.5,3.5) (0,0)--(-.5,.5) (1,1)--(.5,1.5) (3,3)--(2.5,3.5); \draw (1.1,2.7) node{\rotatebox[origin=c]{80}{$\ddots$}} (0,0) node[above left]{$#1$} (1,1) node[above left]{$#2$} (3,3) node[above left]{$#3$}; 
\end{tikzpicture}}}
\newcommand\smtunder[1]{{\begin{tikzpicture}[scale = .15, baseline={(0,0)}] \draw (0,-.5)--(0,0)--(.5,.5) (0,0)--(-.5,.5); \draw (-.3,.8) node[right]{$\scriptstyle#1$}; \end{tikzpicture}}}
\newcommand\smtover[1]{\!{\begin{tikzpicture}[scale = .15, baseline={(0,0)}] \draw (0,-.5)--(0,0)--(.5,.5) (0,0)--(-.5,.5); \draw (.4,.8) node[left]{$\scriptstyle#1$}; \end{tikzpicture}}}
\newcommand\smtovunder[2]{\!\!{\begin{tikzpicture}[scale = .15, baseline={(0,0)}] \draw (0,-.5)--(0,0)--(.5,.5) (0,0)--(-.5,.5); \draw (.4,0.8) node[left]{$\scriptstyle#1$}; \draw (-.3,0.8) node[right]{$\scriptstyle#2$}; \end{tikzpicture}}\!\!\!}
\newcommand\smtrecb[2]{{\begin{tikzpicture}[scale = .15, baseline={(0,0)}] \draw (0,-.5)--(0,0)--(.5,.5) (0,0)--(-1.5,1.5) (-1,1)--(-.5,1.5); \draw (-1.3,1.8) node[right]{$\scriptstyle#1$}; \draw (-.3,.8) node[right]{$\scriptstyle#2$}; \end{tikzpicture}}}
\newcommand\smtoverrightcomb[2]{{\!\!\begin{tikzpicture}[scale = .15, baseline={(0,0.1)}] \draw (0,-.5)--(0,0)--(2.5,2.5) (0,0)--(-.5,.5) (2,2)--(1.5,2.5); \draw[fill] (0,.7) circle (.07) (.6,1.3) circle (.07) (1.2,1.9) circle (.07);
\draw (-1.3,1) node{$\scriptstyle#1$} (.7,3) node{$\scriptstyle#2$}; \end{tikzpicture} }}
\newcommand\tO{{\begin{tikzpicture}[scale = .3, baseline={(0,-0.1)}] \draw (0,-.5)--(0,0)--(.5,.5) (0,0)--(-.5,.5); \end{tikzpicture}}}
\newcommand\tA{{\begin{tikzpicture}[scale = .25, baseline={(0,0)}] \draw (0,-.5)--(0,0)--(-1,1)--(-1.5,1.5) (0,0)--(.5,.5) (-1,1)--(-.5,1.5); \end{tikzpicture}}}
\newcommand\tB{{\begin{tikzpicture}[scale = .25, baseline={(0,0)}] \draw (0,-.5)--(0,0)--(1,1)--(1.5,1.5) (0,0)--(-.5,.5) (1,1)--(.5,1.5); \end{tikzpicture}}}
\newcommand\tC{{\begin{tikzpicture}[scale = .2, baseline={(0,0.1)}] \draw (0,-.5)--(0,0)--(-1,1)--(-2,2)--(-2.5,2.5) (0,0)--(.5,.5) (-1,1)--(-.5,1.5) (-2,2)--(-1.5,2.5);  \end{tikzpicture}}}
\newcommand\tD{{\begin{tikzpicture}[scale = .2, baseline={(0,0.1)}] \draw (0,-.5)--(0,0)--(-1,1)--(-1.5,1.5) (0,0)--(.5,.5) (-1,1)--(0,2)--(.5,2.5) (0,2)--(-.5,2.5); \end{tikzpicture}}}
\newcommand\tE{{\begin{tikzpicture}[scale = .2, baseline={(0,0)}] \draw (0,-.5)--(0,0)--(-1,1)--(-1.5,1.5) (-1,1)--(-.5,1.5) (0,0)--(1,1)--(1.5,1.5) (1,1)--(.5,1.5); \end{tikzpicture}}}
\newcommand\tF{{\begin{tikzpicture}[scale = .2, baseline={(0,0.1)}] \draw (0,-.5)--(0,0)--(1,1)--(1.5,1.5) (1,1)--(-.5,2.5) (0,0)--(-.5,.5) (0,2)--(.5,2.5); \end{tikzpicture}}}
\newcommand\tG{{\begin{tikzpicture}[scale = .2, baseline={(0,0.1)}] \draw (0,-.5)--(0,0)--(1,1)--(2,2)--(2.5,2.5) (0,0)--(-.5,.5) (1,1)--(.5,1.5) (2,2)--(1.5,2.5); \end{tikzpicture}}}
\newcommand\trightcomb[1]{{\begin{tikzpicture}[scale = .25, baseline={(0,0.1)}] \draw (0,-.5)--(0,0)--(2.5,2.5) (0,0)--(-.5,.5) (2,2)--(1.5,2.5);
\draw (.35,1.45) node{\rotatebox[origin=c]{80}{$\ddots$}}; \draw (0,1.9) node{$#1$}; \end{tikzpicture}}}
\newcommand\smtrightcomb[1]{{\!\!\begin{tikzpicture}[scale = .15, baseline={(0,0.1)}] \draw (0,-.5)--(0,0)--(2.5,2.5) (0,0)--(-.5,.5) (2,2)--(1.5,2.5); \draw[fill] (0,.7) circle (.07) (.6,1.3) circle (.07) (1.2,1.9) circle (.07); \draw (0,1.9) node{$\scriptstyle#1$}; \end{tikzpicture}}}
\newcommand\smtO{{\begin{tikzpicture}[scale = .2, baseline={(0,-0.1)}] \draw (0,-.5)--(0,0)--(.5,.5) (0,0)--(-.5,.5); \end{tikzpicture}}}
\newcommand\pO{|}
\newcommand\pA{{\begin{tikzpicture} \draw (0,.3)--(0,0) (.3,.3)--(.3,0); \end{tikzpicture}}}
\newcommand\pB{{\begin{tikzpicture} \draw (0,.3)--(0,0)--(.3,0)--(.3,.3); \end{tikzpicture}}}
\newcommand\pC{{\begin{tikzpicture} \draw (0,.3)--(0,0) (.3,0)--(.3,.3) (.6,0)--(.6,.3); \end{tikzpicture}}}
\newcommand\pD{{\begin{tikzpicture} \draw (0,.3)--(0,0)--(.3,0)--(.3,.3) (.6,0)--(.6,.3); \end{tikzpicture}}}
\newcommand\pE{{\begin{tikzpicture} \draw (0,.3)--(0,0) (.3,.3)--(.3,0)--(.6,0)--(.6,.3); \end{tikzpicture}}}
\newcommand\pF{{\begin{tikzpicture} \draw (0,.3)--(0,0)--(.6,0)--(.6,.3)  (.3,.3)--(.3,.1); \end{tikzpicture}}}
\newcommand\pG{{\begin{tikzpicture} \draw (0,.3)--(0,0)--(.6,0)--(.6,.3)  (.3,.3)--(.3,0); \end{tikzpicture}}}
\newcommand\pH{{\begin{tikzpicture} \draw (0,.3)--(0,0)--(.6,0)--(.6,.3)  (.3,.3)--(.3,.1) (.8,0)--(.8,.3); \end{tikzpicture}}}
\newcommand\partexB{\begin{tikzpicture} \draw (-.3,.3) -- (-.3,0) (0,.3) -- (0,0) -- (.6,0) -- (.6,.3) (.3,.3) -- (.3,0.1); \end{tikzpicture}}
\newcommand\partexC{\begin{tikzpicture} \draw (-.3,.3) -- (-.3,0) (0,.3) -- (0,0) -- (.3,0) -- (.3,.3) (.6,.3) -- (.6,0) (.9,.3) -- (.9,0) -- (1.5,0) -- (1.5,.3) (1.2,.3) -- (1.2,0.1); \end{tikzpicture}}
\newcommand\partexD{\begin{tikzpicture} \draw (-.3,.3) -- (-.3,0) (0,.3) -- (0,0) -- (.3,0) -- (.3,.3) (.6,.3) -- (.6,.1) (.9,.3) -- (.9,0) -- (1.5,0) -- (1.5,.3) (1.2,.3) -- (1.2,0.1) (.3,0) -- (.9,0); \end{tikzpicture}}
\newcommand\partexE{\begin{tikzpicture} \draw (-.3,.3) -- (-.3,0) (0,.3) -- (0,0) -- (.3,0) -- (.3,.3) (.6,.3) -- (.6,0); \end{tikzpicture}}
\theoremstyle{plain}
\newtheorem{thm}{Theorem}[section]
\newtheorem{prop}[thm]{Proposition}
\newtheorem{lem}[thm]{Lemma}
\newtheorem{coroll}[thm]{Corollary}
\theoremstyle{definition}
\newtheorem{defn}[thm]{Definition}
\newtheorem{nota}[thm]{Notation}
\newtheorem{ex}[thm]{Example}
\newtheorem{exs}[thm]{Examples}
\theoremstyle{remark}
\newtheorem{rmk}[thm]{Remark}
\title{Planar binary trees, noncrossing partitions \\ and the operator-valued S-transform }
\author[Kurusch Ebrahimi-Fard]{Kurusch Ebrahimi-Fard${}^{\diamond}$}
\address[${}^{\diamond}$]{Department of Mathematical Sciences, Norwegian University of Science and Technology (NTNU), 7491 Trondheim, Norway. Centre for Advanced Study (CAS), 0271 Oslo, Norway.}
\email{kurusch.ebrahimi-fard@ntnu.no}
\urladdr{https://folk.ntnu.no/kurusche/}
\author[Timoth\'e Ringeard]{Timoth\'e Ringeard${}^{\dagger}$}
\address[${}^{\dagger}$]{\'Ecole Normale Sup\'erieure, 75005 Paris, France.}
\email{timothe.ringeard@ens.psl.eu}
\subjclass[2020]{46L54, 06A07, 16W60}
\keywords{noncrossing partitions, planar binary trees, operator-valued free probability, S-transform}
\date{\today}
\begin{document}

\begin{abstract}
We revisit the twisted multiplicativity property of Voiculescu's S-transform in the operator-valued setting, using a specific bijection between planar binary trees and noncrossing partitions. 
\end{abstract}

\maketitle

\tableofcontents


\section{Introduction}
\label{sec:intro}

Operator-valued free probability theory expands upon Voiculescu's free probability by incorporating expectation maps that are algebra-valued. Essential concepts from scalar-valued theory, such as R- and S-transforms, have been extended to the operator-valued context. The mathematical foundation for exploring operator-valued free probability involves the concept of formal multilinear function series defined over an operator-valued probability space, as detailed in \cite{dykema2006strans,dykema2007strans}. The reader is refereed to Mingo and Speicher \cite{MingoSpeicher2017} and Speicher \cite{speicher_notes_op_valued} for background on operator-valued free probability. It is interesting to remark that other types of non-commutative independence also exist (monotone and boolean), which fit naturally in the same combinatorial framework, see for instance \cite{bengsch, monotone_free_bool_cum_shuffle,muraki}.

Following Speicher's seminal work \cite{speicher1994multiplicative}, noncrossing set partitions emerge as the main combinatorial tool in -- operator-valued -- free probability.  It is worth recalling that both the count of distinct noncrossing partitions for a set with n elements and the number of planar binary trees with n internal nodes are intricately tied to the Catalan numbers $C_{n}$. Given this connection, it becomes natural to explore one-to-one correspondences that capture the inherent structural similarities between these two combinatorial entities. In light of these considerations, the primary objective of this work is to use a specific bijection between planar binary trees and noncrossing set partitions to prove two central formulas within the realm of operator-valued free probability: the twisted multiplicativity of the S-transform in Theorem \ref{thm:S_transf_mult_free_proba}, and a factorisation identity in Proposition \ref{prop:formula_cum_product_free_vars}. We note that the multiplicativity of the operator-valued S-transform was shown by Dykema using different methods \cite{dykema2006strans,dykema2007strans}, and the factorisation identity is known and proven in the scalar-valued case, see for instance the standard reference by Nica and Speicher \cite{nica_speicher_book}. We provide a proof for an analog statement in the operator-valued case. This approach not only highlights the interplay between combinatorial structures but also provides a powerful tool for deriving and understanding key results in the operator-valued free probability framework. 
 
More precisely, considering a product $ab$ of two freely independent random variables, we revisit Dykema's \cite{dykema2006strans,dykema2007strans} twisted multiplicativity formula for Voiculescu's operator-valued S-transform \cite{VDN1992,voiculescu1995} 
$$
	S_{ab} = S_{b} \cdot S_{a}\circ (S_{b}^{-1}\cdot  I \cdot  S_{b}),
$$
using planar binary trees instead of noncrossing partitions. Here $S_a$ and $S_b$ are algebra-valued formal multilinear function series. The latter can be equipped with compatible noncommutative multiplication and composition -- as seen on the righthand side of the above identity. We note that in the scalar-valued, that is, commutative setting, the term $S_{b}^{-1}\cdot  I \cdot  S_{b}$ simplifies to the identity map $I$ implying that the scalar-valued S-transform of a product of free random variables factorizes, $S_{ab} = S_{b}S_{a}= S_{a} S_{b}$. 

Considering the sum $a+b$ of two free random variables, it is natural to consider the R-transform which is defined in terms of cumulants and which linearizes free convolution in the sense that
$$
	R_{a+b} = R_{a} + R_{b}.
$$
In the scalar-valued setting, the R-transform with respect to a product $ab$ of two freely independent random variables can be understood at the level of cumulants through the well-known formula \cite{nica_speicher_book} 
\begin{equation}
\label{eq:scalar_valued_cumulant_of_ab}
	c_{n}(ab,\ldots,ab) = \sum_{\pi \in \mathrm{NC}(n)} c_{\pi}(a) c_{{\rm Kr}(\pi)}(b),
\end{equation}
which expresses the n-th cumulant of the product $ab$ in terms of a sum of products of cumulants with respect to $a$ and $b$ alone; Speicher showed that freeness of random variables is equivalent to the vanishing of mixed free cumulants. Here ${\rm Kr}(\pi) \in \mathrm{NC}(n)$ is the so-called Kreweras complement of the noncrossing partition $\pi \in \mathrm{NC}(n)$. In the same spirit as for the S-transform, we will revisit the scalar-valued identity \eqref{eq:scalar_valued_cumulant_of_ab} by proving, in Proposition \ref{prop:formula_cum_product_free_vars}, its operator-valued analog using the refined combinatorial structure of planar binary trees. 

A noteworthy observation pertains to the specific bijection between binary trees and noncrossing partitions, elaborated in the following section. We propose that, throughout the paper, exploring the application of this bijection can offer insights into the results concerning the corresponding noncrossing partitions. It serves as a valuable perspective, highlighting instances where the explicit recursive nature of binary trees proves advantageous compared to relying solely on noncrossing partitions. This comparison is detailed in the first section. Additionally, it could be valuable to explore Knuth's rotation correspondence, which maps planar binary trees to planar rooted trees, providing another avenue for investigation.

\medskip

Here is an outline of the paper. In Section \ref{sec:bij_tree_non_crossing_partition}, a bijection between noncrossing partitions and planar binary trees is given, that will be useful in the rest of the paper. A general way of constructing such a bijection, based on the concept of Catalan pairs, is presented. In Appendix \ref{app:other_bij} it is shown how this concept can be used to describe other already existing bijections between trees and noncrossing partitions. Section \ref{sec:S_transf_multilin_fct_series} explores formal series of multilinear functions and how they can be constructed using binary trees. Several so-called boxed convolution products on such series are defined together with S-transforms thereof using binary trees. In Appendix \ref{app:operad_pt_view} an operadic version of several of the results of Section \ref{sec:S_transf_multilin_fct_series} are presented which complement the combinatorial proofs therein. In Section \ref{sec:meaning_free_proba}, it is explained how the results can be interpreted in the context of operator-valued free probability. In particular, we give a meaning to one of the boxed convolutions, which corresponds to the multiplication of two free non-commutative random variables. The results presented in the previous sections allow to re-derive Dykema's twisted factorisation property of Voiculescu's S-transform. We also provide a operator-valued formula for free cumulants of the product of two free random variables, which depends on the free cumulants of the respective random variables (see Proposition \ref{prop:formula_cum_product_free_vars}). 

\medskip

{\bf{Acknowledgements}}: KEF is supported by the Research Council of Norway through project 302831 “Computational Dynamics and Stochastics on Manifolds” (CODYSMA). He would also like to thank the Centre for Advanced Study (CAS) in Oslo for hospitality and its support.  TR would like to thank the Department of Mathematical Sciences at the Norwegian University of Science and Technology in Trondheim for hospitality during a visit in the spring of 2023.


\section{Catalan pairs, binary trees and noncrossing partitions}
\label{sec:bij_tree_non_crossing_partition}

Planar binary trees and noncrossing partitions have an interesting combinatorial connection. Indeed, there exist many bijections between them, and these bijections preserve some kind of structure on both sides (see for example \cite{edelman_bij}, or \cite{prodinger}, \cite{dershowitz_zaks} along with Knuth's correspondance \cite{knuth}). We are going to define yet another bijection, by defining a Catalan structure on both planar binary trees and noncrossing partitions. This bijection will preserve the Catalan structure. The motivation is that using the former -- more obvious and visual -- Catalan structure can be helpful in computations involving the latter together with its more hidden Catalan structure.

For the reader wanting to learn more about the relations between several Catalan objects, we refer the reader to the recent book by Stanley \cite{Stanley2015}. We remark that the use of trees in the study of noncrossing partitions appears for instance in a standard part of the analysis of GUE matrices, see \cite[2.1.3, 2.1.4]{anderson}.


\subsection{Catalan pairs}
\label{ssec:catalan_pairs}

The common denominator that we are going to use between planar binary trees and noncrossing partitions is the notion of Catalan pair, defined next. We remark that the terminology of Catalan pairs also appears in \cite{disanto_catalan_pairs}, but is unrelated to the notion we introduce in this paper.

\begin{defn}
\label{def:isom_Cat_sets}
Let $C = (C_n)_{n\ge 0}$ be a collection of finite sets, and assume that $C_0 = \{1_C\}$. A \textit{Catalan pair} $(C,f)$ over $C$ is defined in terms of a \textit{Catalan map} $f:C\times C\to C$ inducing bijections $\bigsqcup_{k=0}^{n} \big(C_k \times C_{n-k}\big) \to C_{n+1}$ for all $n\ge 0$. 
\end{defn}

\begin{rmk}
\begin{enumerate}
	\item If $(C,f)$ is a Catalan pair, then $|C_n| = \frac 1 {n+1} \binom{2n}{n}$ is the $n$-th Catalan number.
	
	\item A Catalan map $f$ gives a way to construct all elements of $C$ from the element of $C_0=\{1_C\}$. Indeed, from Definition \ref{def:isom_Cat_sets} it is clear that any element in $\bar{C}:=(C_n)_{n > 0}$ can be written as iterated expressions of the map $f$ and $1_C$ such as, for example, $f(1_C, 1_C)$ or $f( f(1_C, f(1_C,1_C)),~ f(1_C, 1_C))$. Moreover, these expressions are unique. Both of these facts follow from simple inductions.
	
	\item Inductions are particularly efficient in the context of Catalan pairs: if a property is true for the element $1_C$ and preserved by the Catalan map $f$ (in the sense that if $a,b$ satisfy both the property so does $f(a,b)$), then the property is true for all elements.
\end{enumerate}
\end{rmk}

\begin{lem}
\label{lem:isom_Cat_pairs}
Let $(C,f)$ and $(C',f')$ be Catalan pairs. Then there exists a unique isomorphism of Catalan pairs, i.e., there exists a unique bijection $\varphi: C\to C'$ such that $\varphi(1_C) = 1_{C'}$ and for all $x, y\in C$, $\varphi (f(x,y)) = f'(\varphi(x), \varphi(y))$.
\end{lem}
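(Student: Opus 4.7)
The plan is to construct $\varphi$ by induction on the index $n$, using the unique-decomposition property built into the definition of a Catalan pair, and then to verify uniqueness by exactly the same induction.

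For existence, I would proceed as follows. Set $\varphi(1_C) := 1_{C'}$, which handles $C_0$. Suppose $\varphi$ has already been defined on $C_k$ for all $k \le n$, and that it restricts to a bijection $C_k \to C'_k$ on each such level. Given $z \in C_{n+1}$, the fact that $f$ induces a bijection $\bigsqcup_{k=0}^{n}(C_k \times C_{n-k}) \to C_{n+1}$ means there is a unique pair $(x,y) \in C_k \times C_{n-k}$, for a unique $k$, with $z = f(x,y)$. Define $\varphi(z) := f'(\varphi(x), \varphi(y))$, which is legal since $\varphi(x)$ and $\varphi(y)$ have already been defined. By the induction hypothesis, the map $(x,y) \mapsto (\varphi(x),\varphi(y))$ is a bijection $\bigsqcup_{k=0}^{n}(C_k \times C_{n-k}) \to \bigsqcup_{k=0}^{n}(C'_k \times C'_{n-k})$; composing with the Catalan bijection for $f'$ shows that $\varphi : C_{n+1} \to C'_{n+1}$ is a bijection, closing the induction. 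By construction, $\varphi(f(x,y)) = f'(\varphi(x),\varphi(y))$ for all $x,y \in C$, so $\varphi$ is an isomorphism of Catalan pairs.

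For uniqueness, suppose $\psi : C \to C'$ is another such isomorphism. Then $\psi(1_C) = 1_{C'} = \varphi(1_C)$, and if $\psi$ and $\varphi$ agree on $C_0, \ldots, C_n$, then for any $z = f(x,y) \in C_{n+1}$ one has $\psi(z) = f'(\psi(x),\psi(y)) = f'(\varphi(x),\varphi(y)) = \varphi(z)$. By induction $\psi = \varphi$ everywhere.

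The only potentially subtle point is making sure the recursive definition of $\varphi$ is well-posed, which reduces to the uniqueness of the decomposition $z = f(x,y)$; but this is exactly what it means for $f$ to induce a bijection from $\bigsqcup_{k}(C_k \times C_{n-k})$ onto $C_{n+1}$, so there is really no obstacle. The proof is essentially the universal property of an initial algebra for the functor $X \mapsto 1 + X \times X$, expressed in elementary combinatorial language.
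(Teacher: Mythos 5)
Your proof is correct and follows exactly the approach the paper indicates: the paper's proof is the one-line remark that ``$\varphi$ can be constructed on $C_n$ by induction on $n$, as does unicity,'' and your argument is simply a careful spelling-out of that induction, including the well-posedness point about unique decomposition. Nothing to add.
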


\begin{proof}
The map $\varphi$ can be constructed on $C_n$ by induction on $n$, as does unicity.
\end{proof}

\begin{lem}
\label{lem:cat_pair_other_decomp}
Let $(C,f)$ be a Catalan pair. Then for $n\ge 0$:
\begin{align*}
	C_n &= \bigsqcup_{\substack{n=k_1+\cdots+k_m \\ k_i \ge 1}}\{f(a_1, f(a_2, f(\ldots, f(a_m, 1_C))) \mid a_i \in C_{k_i-1},\ 1\le i\le m\}.
\end{align*}
\end{lem}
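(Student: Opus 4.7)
The plan is to prove the identity by induction on $n$, using the defining Catalan bijection $\bigsqcup_{k=0}^{n-1} C_k \times C_{n-1-k} \to C_n$ repeatedly on the right factor, so that elements of $C_n$ are unfolded into ``right combs'' whose spines are decorated by elements $a_1, a_2, \ldots, a_m$ lying in smaller $C_{k_i-1}$'s.

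First, I would handle the base case $n = 0$: the only composition of $0$ into positive parts is the empty one ($m = 0$), and the corresponding nested expression is by convention just $1_C$, so the right-hand side equals $\{1_C\} = C_0$.

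For the inductive step, fix $n \ge 1$ and assume the identity holds for all integers $<n$. By Definition \ref{def:isom_Cat_sets}, any $x \in C_n$ can be written uniquely as $x = f(a_1, y)$ with $a_1 \in C_{k_1 - 1}$ and $y \in C_{n-k_1}$ for a unique $k_1 \in \{1, \ldots, n\}$ (here I reindex the Catalan decomposition by setting $k_1 - 1$ for the size of the left factor so that $a_1 \in C_{k_1-1}$). Since $n - k_1 < n$, the induction hypothesis applies to $y$, giving a unique composition $n - k_1 = k_2 + \cdots + k_m$ with $k_i \ge 1$ and unique elements $a_i \in C_{k_i - 1}$ for $2 \le i \le m$ (with the convention that $m = 1$ and $y = 1_C$ when $n - k_1 = 0$) such that $y = f(a_2, f(a_3, \ldots, f(a_m, 1_C)))$. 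Prepending $k_1$ and $a_1$ to these data yields a composition $n = k_1 + k_2 + \cdots + k_m$ and an expression of $x$ of the required form. Uniqueness of the whole decomposition follows from the uniqueness at the first step (which is the Catalan bijection) combined with the uniqueness given by the induction hypothesis.

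I expect no serious obstacle beyond bookkeeping; the only subtlety is matching the indexing convention $a_i \in C_{k_i - 1}$ with $k_i \ge 1$ to the Catalan decomposition $C_k \times C_{n-1-k}$, and treating the degenerate case $y = 1_C$ (equivalently $m = 1$, $k_1 = n$) and the empty-composition base case $n = 0$, $m = 0$ uniformly. Both are naturally accommodated by reading the nested $f$-expression with the innermost slot filled by $1_C$.
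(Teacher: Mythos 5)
Your proof is correct and follows essentially the same route as the paper: both arguments unfold an element of $C_n$ by applying the Catalan bijection to write it as $f(a_1, y)$ and then recurse on the right factor $y$, with uniqueness coming from the bijectivity of $f$ at each stage. The only cosmetic difference is that the paper phrases this as injectivity plus surjectivity of an explicit map $g_n$, whereas you establish existence and uniqueness of the decomposition in a single induction.
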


\begin{proof}
Let
$$
	g_n : (a_1, \ldots, a_m) \in \bigsqcup_{\substack{n=k_1+\cdots+k_m \\ k_i \ge 1}} 
	C_{k_1-1}\times \cdots\times C_{k_m-1} \mapsto f(a_1, f(a_2, f(\ldots, f(a_m, 1_C))) \in C_n .
$$
Then $g_n$ is injective: $f(a_1, f(a_2, f(\ldots, f(a_m, 1_C))) = f(b_1, f(b_2, f(\ldots, f(b_{m'}, 1_C)))$, then by injectivity of $f$, $a_1=b_1$ and $f(a_2, f(\ldots, f(a_m, 1_C))) = f(b_2, f(\ldots, f(b_{m'}, 1_C)))$ and by induction we arrive to $m=m'$ and $a_i=b_i$ for all $1\le i\le m$. Next, we  show by induction on $n$ that $g_n$ is surjective. Let $a \in C_n$. Then $a = f(a_1, a')$ with $a_1 \in C_{k_1-1}$, $a' \in C_{n'}$, $k_1\ge 1$, $n' \ge 0$ and $n = k_1+n' > n'$. So we can write by induction $a' = f(a_2, f(\ldots, f(a_m, 1_C)))$, and thus $a = f(a_1, f(a_2, f(\ldots, f(a_m, 1_C)))$.
\end{proof}


\subsection{Planar binary trees and noncrossing partitions}
\label{ssec:bin_trees_ncp}

Before defining the Catalan pair structures on binary trees and noncrossing partitions, let us first recall some notations for planar binary trees.

\begin{defn}
\label{def:Y_trees}
\begin{enumerate}
\item A \textit{planar binary tree} is a rooted tree where each internal vertex has exactly two children, a right one and a left one. More formally, for $n \ge 0$, the set of planar binary trees with $n$ internal vertices is
\allowdisplaybreaks
\begin{align*}
	Y_0 &= \{|\} \\
	Y_n &= \bigcup_{\substack{k+l=n-1\\ k,l\ge 0}}\{\tovunder \sigma \tau \mid \sigma \in Y_k,\ \tau \in Y_l\} \qquad n > 0,
\end{align*}
where $|$ is the so-called empty tree, i.e., with no internal vertex. Let $Y := \bigcup_{n\ge 0} Y_n$ and $\overline Y := \bigcup_{n\ge 1} Y_n$. Denoting for a tree $\tau \in Y_n$ the set of internal vertices by $V(\tau)$, we write $|\tau|:=|V(\tau)|=n$. We will abusively call vertex an internal vertex.

\item For a tree $\tau \in \overline Y_n$, the "left-to-right" ordering of its vertices is the linear order $o(\tau)=\{v_1 < \cdots < v_n\}$ on its vertices $v_i \in V(\tau)$, defined recursively by
$$
	o\left( \children{\sigma}{x}{\tau} \right) = \{o(\sigma) < x < o(\tau)\}.
$$
We can therefore identify the vertices of $\tau$ with the set $[|\tau|]:=\{1, \ldots, |\tau|\}$. 
\end{enumerate}
\end{defn}

As an example, we consider the planar binary tree on the lefthand side of Figure 1, where only internal vertices are shown. The "left-to-right" ordering $o(\tau)$ is indicated by the labelling of the vertices of $\tau$. 

\begin{defn}
\label{def:grafttrees}
We define two grafting operations, called {\it{over}} and {\it{under}}, on trees
$$
	/, \backslash : Y\times Y \to Y,
$$ 
where $\tau/\sigma$ (read $\tau$ over $\sigma$) is the tree obtained by adding $\tau$ as the left-child of the left-most internal vertex of $\sigma$, and $\tau \backslash \sigma$ (read $\tau$ under $\sigma$) is the tree obtained by adding $\sigma$ as the right-child of the right-most internal vertex of $\tau$.
\end{defn}

For example:
$$
	\tO / \tO= \tA
	\qquad\
	\tO \backslash \tO = \tB
	\qquad\
	(\tO \backslash \tO ) / \tO = \tD
	\qquad\
	\tO  \backslash (\tO / \tO) = \tF.
$$

\begin{rmk}
\label{emk:operad}
\begin{enumerate}
\item
The grafting operations, {\it{over}} and {\it{under}}, on planar binary trees where introduced by J.-L.~Loday in \cite{loday2002}, which provides a framework for understanding algebraic structures on trees, particularly in the context of dendriform operations, by visualising them using combinatorial structure of planar rooted trees.

\item
Note that in both $\tau$ over $\sigma$ and $\tau$ under $\sigma$, the vertices of $\tau$ appear before the vertices of $\sigma$ in the left-to-right ordering. The words \textit{over} and \textit{under} refer to the tree growing up, rather than the left-to-right order. In that sense, the root of $\tau$ is higher than the root of $\sigma$ in $\tau$ over $\sigma$, for example.

\item
For $\tau\in \overline Y$, we define the substitution $\mu_\tau : \overline Y^{|\tau|} \to \overline Y$, where the tree $\mu_\tau(\sigma_1, \ldots, \sigma_{|\tau|})$ is obtained from the tree $\tau$ by replacing each vertex $v_i \in V(\tau)$ with the tree $\sigma_i$. Note that the order on the vertices is $o(\mu_\tau(\sigma_1, \ldots, \sigma_{|\tau|})) = \{o(\sigma_1) < \cdots < o(\sigma_{|\tau|})\}$, where we identify the vertices of $\sigma_i$ with their image in $\mu_\tau(\sigma_1, \ldots, \sigma_{|\tau|})$. We remark that this operation defines a operadic structure on $\overline Y$(in the sense of a non-symmetric set-operad, as opposed to a symmetic or linear operad), which allows to write $\mu_\tau(\sigma_1, \ldots, \sigma_{|\tau|})$ in operadic notation
\begin{equation}
\label{eq:operadprod}
	\tau \circ (\sigma_1, \ldots, \sigma_{|\tau|}) \in  \overline Y.
\end{equation}
We refer the reader to Frabetti's work \cite{frabetti} for more details. 
\end{enumerate}
\end{rmk}

The definition of planar binary trees implies the following statement.

\begin{lem}
\label{lem:Cat_pair_Y}
Define the map $\mathrm{Cat}_Y: Y\times Y \to Y$, $(\sigma, \tau) \mapsto \tovunder \sigma \tau$. 
Then $(Y, \mathrm{Cat}_Y)$ is a Catalan pair.
\end{lem}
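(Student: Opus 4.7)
The plan is to verify directly the two defining conditions of a Catalan pair from Definition \ref{def:isom_Cat_sets}. First, $Y_0 = \{|\}$ is a singleton by definition, so the base requirement $C_0 = \{1_C\}$ is satisfied with $1_Y = |$. It then remains to show that, for each $n \ge 0$, the map $\mathrm{Cat}_Y$ induces a bijection $\bigsqcup_{k=0}^{n} \bigl(Y_k \times Y_{n-k}\bigr) \to Y_{n+1}$.

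For the grading and for surjectivity I would argue as follows. Given $\sigma \in Y_k$ and $\tau \in Y_{n-k}$, the tree $\tovunder{\sigma}{\tau}$ has exactly one new internal vertex (the new root) on top of the internal vertices of $\sigma$ and $\tau$, so it lies in $Y_{k+(n-k)+1} = Y_{n+1}$; this is the grading axiom. Surjectivity is then immediate from the recursive definition of $Y_{n+1}$ given in Definition \ref{def:Y_trees}, which states exactly that every element of $Y_{n+1}$ is of the form $\tovunder{\sigma}{\tau}$ with $\sigma \in Y_k$, $\tau \in Y_l$, and $k + l = n$.

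For injectivity I would use the observation that every nonempty planar binary tree has a canonical root, with canonically defined left and right subtrees. Reading off these two subtrees recovers $\sigma$ and $\tau$ uniquely from $\tovunder{\sigma}{\tau}$, so $\mathrm{Cat}_Y$ is injective on each $Y_k \times Y_{n-k}$. Moreover, if two pairs $(\sigma, \tau) \in Y_k \times Y_{n-k}$ and $(\sigma', \tau') \in Y_{k'} \times Y_{n-k'}$ produce the same tree, then $\sigma = \sigma'$ forces $k = |\sigma| = |\sigma'| = k'$, so the union on the left is in fact disjoint, and $\mathrm{Cat}_Y$ is bijective level by level.

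The only point worth highlighting — and the single place where any thought is needed — is that the decomposition at the root is canonical, so no planar binary tree can be written as $\tovunder{\sigma}{\tau}$ in two genuinely different ways. Everything else is a direct repackaging of the recursive definition of $Y_n$, so I do not expect any real obstacle: the proof amounts to straightforward bookkeeping.
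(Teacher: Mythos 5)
Your proof is correct and matches the paper's approach: the paper gives no written proof at all, merely noting that ``the definition of planar binary trees implies the following statement,'' and your argument is exactly the careful unpacking of that remark (the recursive definition of $Y_{n+1}$ is surjectivity, and uniqueness of the root decomposition is injectivity). No issues.
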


\begin{defn}
\label{def:partitions_basics}
\begin{enumerate}
\item For $n\ge 0$, let $\text{SP}_n$ denote the set of partitions on $[n] = \{1, \ldots, n\}$, and $\NCP_n \subseteq \text{SP}_n$ is the set of noncrossing partitions of $[n]$. The latter is a set partition $P = \{V_1,\ldots,V_k\}$ of $[n]$ without elements $a<b<c<d$ in $[n]$ such that $a,c \in V_i$ and $b,d \in V_j$, and $i \neq j$. We call $P \in \NCP_n$ irreducible if both $1$ and $n$ belong to the same block in $P$. Let $\text{SP} := \bigcup_{n\ge 0} \text{SP}_n$ and $\NCP := \bigcup_{n\ge 0} \NCP_n$. By convention $\NCP_0 = \{\emptyset\}=\text{SP}_0$ consists of the empty partition. If $P \in \NCP_n$, we write $|P| = n$. For $n\ge 1$, we will denote by $0_n= \{ \{1\},\ldots, \{n\}\}$ the partition of $[n]$ into $n$ singletons, i.e., blocks of size one, and by $1_n= \{\{1,\ldots,n\}\}$ the one-block partition with $n$ elements. The standard diagrammatic representation for noncrossing partitions is used, where, for instance, $0_n = | | | \cdots |$ and $1_n = {\begin{tikzpicture}[baseline={(0,0.05)}] \draw (0,.3)--(0,0)--(1,0)--(1,.3) (.2,.3)--(.2,0) (.8,.3)--(.8,0); \draw (.5,.2) node{$\cdots$}; \end{tikzpicture}}$. 

\item For $P,Q \in \NCP$, define $P * Q$ to be the noncrossing partition resulting from concatenation of the two partitions.

\item For $P,Q \in \NCP$, define the right-merging $P \underline{*} Q$ to be the noncrossing partition obtained from $P * Q$ by merging the last block of $P$ and the last block of $Q$ into a single block. 
\end{enumerate}
\end{defn}

\begin{rmk}
\label{rmk:associativity}
\begin{enumerate}
\item
Observe that both concatenation and right-merging of partitions are associative but non-commutative operations. As an example for $P * Q$ and $P \underline{*} Q$, consider the two partitions $\pE$ and $\partexB $\ , then
$$
	\pE ~*~ \partexB \ =\ \partexC \quad\quad\quad \pE ~\underline*~ \partexB \ =\ \partexD\ .
$$

\item The two operations of concatenation and right-merging also satisfy the following mixed-associative identity. For partitions $P,Q$ and $R$, we have
\[
	(P * Q) \underline* R = P * (Q \underline* R).
\]
Indeed, the partition obtained in both cases amounts to concatenating $P,Q$ and $R$, and then connecting the last block of $Q$ with the last block of $R$. Note, however, that the reversed identity does not hold, i.e. $(P \underline* Q) * R \neq P \underline* (Q * R)$ in general.

\end{enumerate}
\end{rmk}

\begin{lem}
\label{lem:Cat_pair_NCP}
Define the map $\mathrm{Cat}_\mathrm{NCP}: \mathrm{NCP} \times \mathrm{NCP} \to \mathrm{NCP}$, 
\begin{equation}
\label{eq:NCPcatalanpair}
	(P,Q) \mapsto P*(|\underline * Q).
\end{equation}
Then $(\mathrm{NCP}, \mathrm{Cat}_\mathrm{NCP})$ is a Catalan pair.
\end{lem}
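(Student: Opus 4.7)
The plan is to exhibit an explicit inverse to $\mathrm{Cat}_\mathrm{NCP}$. The base case $\mathrm{NCP}_0 = \{\emptyset\}$ is a singleton, so the remaining task is to verify that $\mathrm{Cat}_\mathrm{NCP}$ induces a bijection $\bigsqcup_{k=0}^n \mathrm{NCP}_k \times \mathrm{NCP}_{n-k} \to \mathrm{NCP}_{n+1}$ for each $n \ge 0$. The key structural input is that in any $R \in \mathrm{NCP}_{n+1}$, the block $B$ containing the maximal element $n+1$ is uniquely determined, and its minimum $\min B$ isolates a prefix of $[n+1]$ from the rest: the noncrossing condition prevents any block of $R$ from straddling $B$.

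Given $R \in \mathrm{NCP}_{n+1}$, I would let $B$ be the block containing $n+1$, set $k := \min(B) - 1$, and use the noncrossing condition to conclude that every block of $R$ other than $B$ lies entirely in $[k]$ or entirely in $\{k+2,\ldots,n+1\}\setminus B$. I would then define $P \in \mathrm{NCP}_k$ to be the restriction $R|_{[k]}$, and $Q \in \mathrm{NCP}_{n-k}$ to be the restriction of $R$ to $\{k+2,\ldots,n+1\}$ relabeled by $k+1+i \mapsto i$; after relabeling, $B \setminus \{k+1\}$ becomes a block of $Q$ containing its maximal element $n-k$, i.e., the last block of $Q$. To verify $R = P * (|\underline * Q)$, I would unfold the right-merge: in $|\underline * Q$, the singleton $\{1\}$ is fused with the shifted last block of $Q$, producing exactly the image of $B$ under $b \mapsto b-k$; concatenating with $P$ shifts everything back by $k$ and reassembles $R$. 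Conversely, starting from $(P,Q) \in \mathrm{NCP}_k \times \mathrm{NCP}_{n-k}$, the block of $P * (|\underline * Q)$ containing the maximal element has minimum $k+1$, so the inverse procedure recovers $(P,Q)$.

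The principal obstacle I anticipate is the careful bookkeeping with index shifts and the convention for right-merge on the empty partition: when $Q = \emptyset$ one must read $|\underline * \emptyset = |$, so that this case corresponds to appending a singleton $\{n+1\}$ to $P$ and, on the inverse side, to $B = \{n+1\}$ with $k = n$. Beyond this edge case, the fact that $P * (|\underline * Q)$ is noncrossing is automatic: the only block of $|\underline * Q$ not coming directly from $Q$ is $\{1\}$ fused with the shifted last block of $Q$, and since this fused block extends to the maximal element, no block of $Q$ can cross it; concatenation with $P$ then preserves the noncrossing property.
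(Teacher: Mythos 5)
Your proof is correct and takes essentially the same route as the paper's: the paper's entire argument is the observation that every $R\in\mathrm{NCP}_{n+1}$ decomposes uniquely as $P*(|\,\underline{*}\,Q)$ by cutting at the minimum of the block containing the last element, conveyed by a diagram rather than by your explicit index bookkeeping. Your careful handling of the edge case $Q=\emptyset$ and of why the fused block cannot be crossed simply fills in details the paper leaves to the picture.
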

\begin{proof}
This amounts to saying that every noncrossing partition $R$ has a unique decomposition $R=P*(|\underline * Q)$, as depicted next
\begin{figure}[h!]
\begin{tikzpicture}
	\draw (0,0) rectangle (2,1) (3,0) rectangle (5,1);
	\draw (2.5,1) -- (2.5,-0.5) -- (5,-0.5) -- (5,0);
	\draw (-1,0.5) node{$R~~=$} (1,0.5) node{$P$} (4,0.5) node{$Q$};
\end{tikzpicture}~.
\end{figure}

For example, for $R = ~\partexC~$, this decomposition is $P = ~\partexE~$ and $Q = ~\pA~$.
\end{proof}

Combining this with Lemma \ref{lem:Cat_pair_Y} and  Lemma \ref{lem:isom_Cat_pairs} gives:

\begin{thm}
\label{def:varphi}
There exists a unique isomorphism $\varphi : Y \to \mathrm{NCP}$ of Catalan pairs.
\end{thm}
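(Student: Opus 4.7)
The plan is to observe that this theorem is an immediate corollary of the abstract machinery already developed: Lemma \ref{lem:isom_Cat_pairs} asserts that any two Catalan pairs admit a unique isomorphism between them, Lemma \ref{lem:Cat_pair_Y} equips $Y$ with the Catalan pair structure $\mathrm{Cat}_Y$, and Lemma \ref{lem:Cat_pair_NCP} equips $\mathrm{NCP}$ with the Catalan pair structure $\mathrm{Cat}_\mathrm{NCP}$. Thus the proof reduces to invoking these three results in succession.

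For the sake of concreteness, I would spell out briefly what $\varphi$ looks like. Following the inductive recipe from the proof of Lemma \ref{lem:isom_Cat_pairs}, one sets $\varphi(|) := \emptyset$ (the unique element of $\mathrm{NCP}_0$), and for $\sigma, \tau \in Y$,
\[
    \varphi(\tovunder{\sigma}{\tau}) \;:=\; \mathrm{Cat}_\mathrm{NCP}\bigl(\varphi(\sigma),\varphi(\tau)\bigr) \;=\; \varphi(\sigma)\,*\,\bigl(\,|\,\underline{*}\,\varphi(\tau)\bigr).
\]
Because every tree in $\overline{Y}_n$ is uniquely of the form $\tovunder{\sigma}{\tau}$ with $(\sigma,\tau)\in Y_k\times Y_{n-1-k}$ for a unique $k$, this recursion well-defines $\varphi$ level by level. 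Bijectivity and uniqueness are then inherited from the corresponding properties of $\mathrm{Cat}_Y$ and $\mathrm{Cat}_\mathrm{NCP}$ via a straightforward induction on $n$, exactly as in the proof of Lemma \ref{lem:isom_Cat_pairs}.

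There is essentially no obstacle: the entire content of the theorem has been offloaded onto Lemma \ref{lem:Cat_pair_NCP}, where one had to verify that the unique decomposition $R = P * (\,|\,\underline{*}\,Q)$ furnishes a genuine Catalan map on $\mathrm{NCP}$. Once that is in hand, Theorem \ref{def:varphi} is a one-line consequence of Lemma \ref{lem:isom_Cat_pairs}.
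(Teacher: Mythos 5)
Your proof is correct and follows exactly the paper's route: the theorem is obtained by combining Lemma \ref{lem:isom_Cat_pairs} with the Catalan pair structures from Lemmas \ref{lem:Cat_pair_Y} and \ref{lem:Cat_pair_NCP}. Your explicit inductive description of $\varphi$ via $\varphi(\tovunder{\sigma}{\tau}) = \varphi(\sigma)*(|\,\underline{*}\,\varphi(\tau))$ is likewise the content of the paper's Remark \ref{def:varphi_explicit} and equation \eqref{eq:inductive}, so nothing is missing.
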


\begin{rmk}
\label{def:varphi_explicit}
We give an explicit description of the bijection $\varphi : Y \to \NCP$, which is graded in the sense that $\varphi(Y_n) = \NCP_n$, and was already considered in \cite{novelli_thibon_varphi}. Let us construct $\varphi(\tau)$ for $\tau\in Y_n$, $n\ge 1$. First, number the vertices of $\tau$ by elements from $[n]$ using the left-to-right order. Then $\varphi (\tau)\in \NCP(n)$ is the noncrossing partition on $[n]$ where the block of $i\in [n]$ is the set of all vertices connected to $i$ by "right arms". More precisely, define $R(x,y)$ to be the binary relation on $[n]$: "x is the right child of y". Then $\varphi (\tau)$, seen as an equivalence relation, is the reflexive-symmetric-transitive closure of $R$. Figure 1 shows an example, and Example \ref{ex:first_value_varphi} shows the first few values of $\varphi$ on small trees.

To verify that the constructed map $\varphi$ is the same as the one in Definition \ref{def:varphi}, it suffices to notice that $\varphi(|) = \emptyset$, and that for all trees $\sigma, \tau$ in $Y$, the inductive formula holds
\begin{equation}
\label{eq:inductive}
	\varphi(\tovunder \sigma \tau) = \varphi (\sigma) * \big(| \underline * \varphi(\tau)\big).
\end{equation}
\end{rmk}

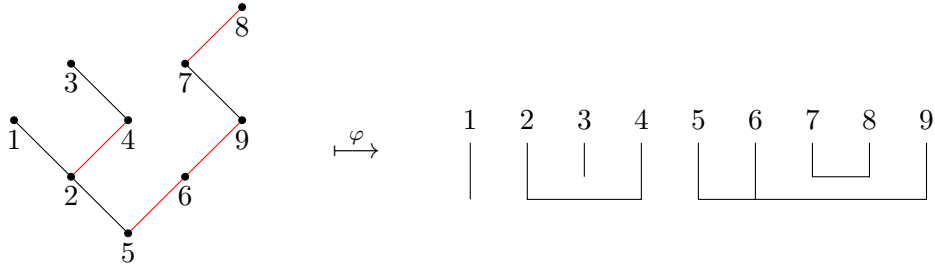
\begin{figure}[ht!]
\label{fig:ex_of_varphi}
\begin{center}
\begin{tikzpicture}[scale = 1.5]
\draw[red] (0,0) -- (.5,.5) -- (1,1) (.5,1.5) -- (1,2) (-.5,.5) -- (0,1);
\draw (1,1) -- (.5,1.5) (0,0) -- (-.5,.5) -- (-1,1)  (0,1) -- (-.5,1.5);
\draw[fill] (-1,1) circle (.03) node[below]{1};
\draw[fill] (-.5,.5) circle (.03) node[below]{2};
\draw[fill] (-.5,1.5) circle (.03) node[below]{3};
\draw[fill] (0,1) circle (.03) node[below]{4};
\draw[fill] (0,0) circle (.03) node[below]{5};
\draw[fill] (.5,.5) circle (.03) node[below]{6};
\draw[fill] (.5,1.5) circle (.03) node[below]{7};
\draw[fill] (1,2) circle (.03) node[below]{8};
\draw[fill] (1,1) circle (.03) node[below]{9};

\draw (2,.8) node{$\overset{\varphi}{\longmapsto}$}; 

\draw (3, 1) node{1};
\draw (3.5, 1) node{2};
\draw (4, 1) node{3};
\draw (4.5, 1) node{4};
\draw (5, 1) node{5};
\draw (5.5, 1) node{6};
\draw (6, 1) node{7};
\draw (6.5, 1) node{8};
\draw (7, 1) node{9};

\draw (3, .8) -- (3, 0.3);
\draw (3.5, .8) -- (3.5, 0.3) -- (4.5, 0.3) -- (4.5, .8);
\draw (4, .8) -- (4, 0.5);
\draw (5, .8) -- (5, 0.3) -- (7, 0.3) -- (7, .8) (5.5, 0.3) -- (5.5, .8);
\draw (6, .8) -- (6, 0.5) -- (6.5, 0.5) -- (6.5, .8);
\end{tikzpicture}
\caption{An example value of the function $\varphi$. Only internal vertices are shown in the tree on the left.}
\end{center}
\end{figure}

\begin{ex}
\label{ex:first_value_varphi}
Here are the first values of $\varphi$:
\begin{align*}
	\varphi\left(\tO\right) &= \pO & \varphi\left(\tD\right) &= \pD\\
	\varphi\left(\tA\right) &= \pA & \varphi\left(\tE\right) &= \pE\\
	\varphi\left(\tB\right) &= \pB & \varphi\left(\tF\right) &= \pF\\
	\varphi\left(\tC\right) &= \pC & \varphi\left(\tG\right) &= \pG
\end{align*}
\end{ex}

The following lemma shows the relation between the inductive formula \eqref{eq:inductive} and the two grafting operations on trees, over and under, given in Definition \ref{def:grafttrees}.

\begin{lem}
\label{lem:varphi_over_under}
For $\sigma, \tau\in Y$, 
\begin{align}
	\varphi(\sigma/\tau) 
		&= \varphi(\sigma) * \varphi(\tau) \\
	\varphi(\sigma\backslash \tau) 
		&= \varphi(\sigma) \underline* \varphi(\tau).
\end{align}
\end{lem}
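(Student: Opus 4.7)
My plan is to prove both identities by induction, using the recursive definition of $\varphi$ given by the inductive formula \eqref{eq:inductive}, together with the two associativity properties of $*$ and $\underline{*}$ stated in Remark \ref{rmk:associativity}.

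For the first identity, I will induct on $|\tau|$. The key observation is that the grafting $\sigma/\tau$ admits a recursive decomposition with respect to the Catalan decomposition $\tau = \tovunder{\tau_1}{\tau_2}$: since the left-most internal vertex of $\tovunder{\tau_1}{\tau_2}$ lies in $\tau_1$ when $\tau_1 \neq |$ and is the root otherwise, one obtains $\sigma / \tovunder{\tau_1}{\tau_2} = \tovunder{\sigma/\tau_1}{\tau_2}$ (with the convention $\sigma/| = \sigma$). The base case $\tau = |$ is immediate since $\varphi(|) = \emptyset$. For the inductive step, applying \eqref{eq:inductive} and then the induction hypothesis yields
\[
	\varphi(\sigma/\tau) = \varphi(\sigma/\tau_1) * \bigl(|\underline{*}\varphi(\tau_2)\bigr) = \varphi(\sigma) * \varphi(\tau_1) * \bigl(|\underline{*}\varphi(\tau_2)\bigr) = \varphi(\sigma) * \varphi(\tau),
\]
where associativity of $*$ and a final application of \eqref{eq:inductive} close the argument.

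For the second identity, I will symmetrically induct on $|\sigma|$. Here $\sigma = \tovunder{\sigma_1}{\sigma_2}$ satisfies $\sigma \backslash \tau = \tovunder{\sigma_1}{\sigma_2 \backslash \tau}$ (with $|\backslash \tau = \tau$), since the right-most internal vertex of $\sigma$ lies in $\sigma_2$ when $\sigma_2 \neq |$ and is the root otherwise. The base case $\sigma = |$ follows from $\emptyset \underline{*} \varphi(\tau) = \varphi(\tau)$. For the inductive step, applying \eqref{eq:inductive} and the induction hypothesis gives
\[
	\varphi(\sigma \backslash \tau) = \varphi(\sigma_1) * \bigl(|\underline{*} (\varphi(\sigma_2) \underline{*} \varphi(\tau))\bigr),
\]
which I must identify with $\varphi(\sigma) \underline{*} \varphi(\tau) = \bigl(\varphi(\sigma_1) * (|\underline{*} \varphi(\sigma_2))\bigr) \underline{*} \varphi(\tau)$. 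Reconciling these two expressions is the heart of the argument, and is exactly where both associativity properties enter: the mixed-associative identity $(P * Q)\underline{*} R = P * (Q \underline{*} R)$ from Remark \ref{rmk:associativity}(2) moves the outer $\underline{*}\varphi(\tau)$ past the $*$, after which the associativity of $\underline{*}$ allows the regrouping $(|\underline{*} \varphi(\sigma_2)) \underline{*} \varphi(\tau) = |\underline{*} (\varphi(\sigma_2) \underline{*} \varphi(\tau))$.

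The only real subtlety is keeping the two recursive decompositions of the grafting operations straight and verifying that the conventions $\sigma/| = \sigma$ and $|\backslash \tau = \tau$ are compatible with the empty-partition boundary case; everything else is a bookkeeping application of the three associativity identities. I do not expect any significant obstacle.
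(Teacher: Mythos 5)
Your proof is correct and follows essentially the same route as the paper: the first identity is proved by exactly the same induction on $|\tau|$ using the decomposition $\sigma/\tovunder{\tau_1}{\tau_2} = \tovunder{\sigma/\tau_1}{\tau_2}$ and formula \eqref{eq:inductive}. For the second identity the paper merely writes ``follows by a symmetry argument,'' whereas you supply the actual details -- the decomposition $\sigma\backslash\tau = \tovunder{\sigma_1}{\sigma_2\backslash\tau}$, the mixed-associativity $(P*Q)\underline{*}R = P*(Q\underline{*}R)$, and the associativity of $\underline{*}$ -- which is a welcome and correct elaboration rather than a different approach.
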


\begin{proof}
The first equality can be shown by induction on the size of $\tau$. If $\tau = |$, the equality is clear. Otherwise, let us write $\tau = \tovunder {\tau_1}{\tau_2}$ with $|\tau_1|<|\tau|$. Then $\sigma/\tau = \tovunder{\sigma/\tau_1}{\tau_2}$ so
\allowdisplaybreaks
\begin{align*}
	\varphi(\sigma/\tau) 
	&= \varphi(\tovunder{\sigma/\tau_1}{\tau_2}) \\
	&\stackrel{\eqref{eq:inductive}}{=} \varphi(\sigma/\tau_1) *( | \underline * \varphi(\tau_2)) \\
	&\overset{\text{ind.}}{=} \varphi(\sigma)*\varphi(\tau_1) *( | \underline * \varphi(\tau_2)) \\
	&= \varphi(\sigma) * \varphi(\tovunder{\tau_1}{\tau_2}) \\
	&= \varphi(\sigma) * \varphi(\tau).
\end{align*}
The second equality follows by a symmetry argument.
\end{proof}

\begin{rmk}
We will not provide an explicit description of the inverse, $\varphi^{-1}$, and only note that, as for all isomorphisms between Catalan pairs, there is an inductive way to construct it. Indeed, going back to \eqref{eq:NCPcatalanpair} in Lemma \ref{lem:Cat_pair_NCP}, we see that given a noncrossing partition $P \in \text{NCP}$, we can write $P = P_1 * (| \underline * P_2)$, and thus 
$$
	\varphi^{-1}(P) = \tovunder{\varphi^{-1}(P_1)}{\varphi^{-1}(P_2)}.
$$
\end{rmk}


\subsection{The Kreweras complement as a Catalan isomorphism}
\label{ssec:Kreweras_complement}

\begin{defn}
\begin{enumerate}
	\item For noncrossing partitions $P,Q\in \NCP$, define $P\, \overline * \, Q$ to be obtained from $P*Q$, by merging the first block of $P$ with the first block of $Q$ into a single block. 

	Define $\text{Cat}'_\NCP : (P,Q)\in \NCP^2 \mapsto P\, \overline *\, | * Q$, which turns $(\NCP, \text{Cat}'_\NCP)$ into a Catalan pair. Define $\psi : Y \to \NCP$ the corresponding isomorphism of Catalan pairs, and $K' : \NCP \to \NCP$ the isomorphism of Catalan pairs $(\NCP, \text{Cat}'_\NCP) \to (\NCP, \text{Cat}_\NCP)$.

	\item (Poset structure of $\NCP_n$ \cite{nica_speicher_book}) We can define an partial order on $\NCP_n$ by reverse refinement: One has $P\le Q$ if every block of $P$ is included in a block of $Q$. This defines a lattice structure on $\NCP_n$, with minimum element $0_n$ and maximum element $1_n$.
	
	\item For partitions $P,Q\in \text{SP}_n$, $n\ge 1$, we define the partition $P \cup Q\in \text{SP}_{2n}$ by partitioning the odd integers through $P$, and the even integers through $Q$.
\end{enumerate}
\end{defn}

\begin{exs}
\begin{enumerate}
	\item We have $~\pF~ \cup ~\pE~ = ~\begin{tikzpicture}[scale=.3, baseline={(0,0)}]
	\draw (0,1)--(0,0)--(4,0)--(4,1) (1,1)--(1,.3) (2,1)--(2,.3) (3,1)--(3, -.4)--(5,-.4)--(5,1);
	\end{tikzpicture}$. Note that the two partitions, $\pF$ and $\pE$, are noncrossing, but not $~\pF~ \cup ~\pE$.
\end{enumerate}

\end{exs}

\begin{defn}(Kreweras complement)
Given $P\in \NCP_n$, the \textit{Kreweras complement} of $P$, denoted by $K(P)$, is defined to be the largest partition $Q\in \NCP_n$ such that $P\cup Q$ is noncrossing.
\end{defn}

\begin{ex}
Consider the noncrossing partition in $\NCP_8$
$$
	P := 
\begin{tikzpicture}[scale = .5, baseline={(0,.1)}]
\draw (0,1)--(0,0)--(1,0)--(1,1) (2,1)--(2,0)--(7,0)--(7,1) (5,0)--(5,1) (3,1)--(3,.3) (4,1)--(4,.3) (6,1)--(6,.3);
\end{tikzpicture}\, . 
$$
Then its Kreweras complement
$$
	K(P) =
\begin{tikzpicture}[scale = .5, baseline={(0,.1)}]
\draw (0,1)--(0,0) (1,1)--(1,0)--(7,0)--(7,1) (2,1)--(2,.3)--(4,.3)--(4,1) (3,1)--(3,.3) (5,1)--(5,.3)--(6,.3)--(6,1);
\end{tikzpicture}\, .
$$
\end{ex}

The Kreweras complement has a nice diagrammatical interpretation, depicted in the following example. As we will be particularly interested in the noncrossing partition $P\cup K(P)$, it will be useful to have the following notation: Interlace $\tilde 1, \ldots, \tilde n$ with $1, \ldots, n$ in the following way:
$$
	1, \tilde 1, 2, \tilde 2, \ldots, n, \tilde n.
$$
Then the noncrossing partition $P\cup K(P)$ can be thought of as a partition over $1, \tilde 1, 2, \tilde 2, \ldots, n, \tilde n$. For example, if $P$ is the partition of the last example, then the partition $P\cup K(P)$ is:
$$
	\begin{tikzpicture}[scale = .5]
	\draw[thick, red] (0,1)--(0,-1.2)--(2,-1.2)--(2,1) (4,1)--(4,-.7)--(14,-.7)--(14,1) (10,-.7)--(10,1) (6,1)--(6,.3) (8,1)--(8,.3) (12,1)--(12,.3);
	\draw[thick, blue] (1,1)--(1,-.7) (3,1)--(3,-1.2)--(15,-1.2)--(15,1) (5,1)--(5,-.2)--(9,-.2)--(9,1) (7,1)--(7,-.2) (11,1)--(11,-.2)--(13,-.2)--(13,1);
	\draw (0,1) node[above]{$1$} (1,1) node[above]{$\tilde 1$} (2,1) node[above]{$2$} (3,1) node[above]{$\tilde 2$} (4,1) node[above]{$3$} (5,1) node[above]{$\tilde 3$} (6,1) node[above]{$4$} (7,1) node[above]{$\tilde 4$} (8,1) node[above]{$5$} (9,1) node[above]{$\tilde 5$} (10,1) node[above]{$6$} (11,1) node[above]{$\tilde 6$} (12,1) node[above]{$7$} (13,1) node[above]{$\tilde 7$} (14,1) node[above]{$8$} (15,1) node[above]{$\tilde 8$}; 
	\end{tikzpicture}
$$

\begin{lem}
\label{lem:Kreweras_change_*}
For $P,Q \in \mathrm{NCP}$, the Kreweras complement satisfies
\begin{align*}
	K(P * Q) &= K(P) \underline * K(Q), \\ 
	K(P \overline * Q) &= K(P) * K(Q).
\end{align*}
\end{lem}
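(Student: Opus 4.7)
The plan is to argue directly from the diagrammatic characterisation of the Kreweras complement recalled just before the lemma: for $R\in\NCP_N$, the combined partition $R\cup K(R)$ is the noncrossing partition on the interlaced points $1,\tilde 1,2,\tilde 2,\ldots,N,\tilde N$ whose restriction to the non-tildes is $R$ and whose restriction to the tildes is as coarse as possible. In our setting $R$ is either $P*Q$ or $P\,\overline*\,Q$ with $P\in\NCP_n$, $Q\in\NCP_m$ and $N=n+m$, so the tildes split into a left half $\{\tilde 1,\ldots,\tilde n\}$ and a right half $\{\widetilde{n+1},\ldots,\widetilde{n+m}\}$ separated by the gap between $\tilde n$ and $n+1$.

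For the first identity $K(P*Q)=K(P)\,\underline*\,K(Q)$, the crucial observation is that no block of $P*Q$ crosses the gap. First I would check that $K(P)\,\underline*\,K(Q)$ is itself a valid completion: the only arc bridging the gap is the one joining the last block of $K(P)$ (containing $\tilde n$) with the last block of $K(Q)$ (containing $\widetilde{n+m}$) into a single outer block $O$, and this outer arc crosses no block of $P*Q$. For maximality, fix any coarser valid completion $L$; by monotonicity of restriction and the coarseness assumption, the left restriction of $L$ is coarser than $K(P)$, while, being noncrossing with $P$, the maximality of $K(P)$ gives the reverse inequality. Hence the left restriction equals $K(P)$ and similarly the right restriction equals $K(Q)$. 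Any cross-gap block $\mathcal B\ne O$ of $L$ is therefore of the form $B'\cup B''$ with $B'$ a block of $K(P)$ other than the last and $B''$ a block of $K(Q)$ other than the last (otherwise $\mathcal B$ would share an element with $O$); picking $\tilde a\in B'$ and $\tilde b\in B''$ gives $\tilde a<\tilde n<\tilde b<\widetilde{n+m}$ with $\tilde n,\widetilde{n+m}\in O$, which is precisely the forbidden crossing configuration of Definition \ref{def:partitions_basics} between $\mathcal B$ and $O$. Hence $L=K(P)\,\underline*\,K(Q)$.

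For the second identity $K(P\,\overline*\,Q)=K(P)*K(Q)$, the partition $P\,\overline*\,Q$ differs from $P*Q$ only in that the first blocks of $P$ and $Q$ are merged into a single block $B$ containing both $1$ and $n+1$. One checks that $K(P)*K(Q)$ is noncrossing with $B$ (and with the other blocks, as in $P*Q$) by a direct case analysis. For maximality, the same restriction argument as above shows that any coarser valid completion $L$ restricts to $K(P)$ on the left and $K(Q)$ on the right; a hypothetical cross-gap block of $L$ would then contain some $\tilde a\le\tilde n$ and some $\tilde b\ge\widetilde{n+1}$, yielding $1<\tilde a<n+1<\tilde b$ with $1,n+1\in B$, once again the forbidden crossing pattern. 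Hence no cross-gap merger is possible and $L=K(P)*K(Q)$.

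The main obstacle is the first identity, where one must simultaneously show that the merger of the outermost blocks is forced by maximality and that no further cross-gap merger can occur. The key combinatorial input is the crossing argument above, which exploits that $\tilde n$ and $\widetilde{n+m}$ lie in the forced outer block $O$ and bracket any putative additional cross-gap block; this is the step where the noncrossing structure (as opposed to arbitrary set partitions) is genuinely used.
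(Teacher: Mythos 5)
Your proof is correct and follows exactly the route the paper indicates: the paper's own proof of Lemma \ref{lem:Kreweras_change_*} consists of the single remark that it ``can be done diagrammatically'' with details left to the reader, and your argument is precisely that diagrammatic argument carried out in full (validity of the candidate complement via restriction to the two halves, plus the crossing configuration $\tilde a<\tilde n<\tilde b<\widetilde{n+m}$ against $O$, resp.\ $1<\tilde a<n+1<\tilde b$ against $B$, to rule out further cross-gap mergers). The only very minor omissions are the explicit case analysis you defer in the second identity and the degenerate cases where $P$ or $Q$ is empty, neither of which affects correctness.
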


\begin{proof}
The proof can be done diagrammatically, and the details are left to the reader.
\end{proof}

\begin{lem}
The isomorphism $K'$ of Catalan pairs and the Kreweras complement $K$ coincide.
\end{lem}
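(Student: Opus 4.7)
The plan is to invoke the uniqueness of isomorphisms of Catalan pairs given by Lemma \ref{lem:isom_Cat_pairs}. Since $K'$ is by definition the unique isomorphism $(\NCP, \text{Cat}'_\NCP) \to (\NCP, \text{Cat}_\NCP)$, it suffices to show that the Kreweras complement $K$ is also such an isomorphism. So the strategy reduces to verifying the three defining properties: bijectivity, preservation of the base point, and compatibility with the Catalan maps.

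First, I would dispatch the easy ingredients. The Kreweras complement is classically known to restrict to a bijection $\NCP_n \to \NCP_n$ for every $n$ (see e.g.\ \cite{nica_speicher_book}), so $K$ is a bijection $\NCP \to \NCP$. The base point is preserved since $K(\emptyset) = \emptyset$, and one also has $K(|) = |$ because $\NCP_1$ is a singleton.

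The heart of the proof is then the Catalan compatibility
\[
	K\bigl(P \,\overline * \, | \, * \, Q\bigr) \;=\; K(P) \, * \, \bigl(| \,\underline * \, K(Q)\bigr) \qquad \text{for all } P, Q \in \NCP .
\]
For this I would rely entirely on Lemma \ref{lem:Kreweras_change_*}. Reading the left-hand side as $K\bigl(P \,\overline * \, (| * Q)\bigr)$ (a harmless parenthesisation, thanks to a mixed-associativity between $\overline *$ and $*$ that is the mirror image of Remark \ref{rmk:associativity}(2) and checked in the same way), the second identity of Lemma \ref{lem:Kreweras_change_*} turns the outer $\overline *$ into a $*$, giving $K(P) * K(| * Q)$. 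Then the first identity of the same lemma turns the inner $*$ into a $\underline *$, giving $K(P) * \bigl(K(|) \,\underline * \, K(Q)\bigr)$. Substituting $K(|) = |$ yields exactly the required identity, which is $\text{Cat}_\NCP(K(P), K(Q))$.

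Since the combinatorial substance is already packaged into Lemma \ref{lem:Kreweras_change_*}, there is no real obstacle here: the argument is just a two-line algebraic manipulation glued to a uniqueness principle. If one insisted on a self-contained proof, the work would be the diagrammatic check of Lemma \ref{lem:Kreweras_change_*}, which the paper explicitly leaves to the reader.
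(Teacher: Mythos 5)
Your proof is correct and follows essentially the same route as the paper: both arguments reduce the statement to the defining recursion of $K'$ (equivalently, the uniqueness of the Catalan-pair isomorphism from Lemma \ref{lem:isom_Cat_pairs}) and then verify the compatibility $K(P \,\overline{*}\, | * Q) = K(P) * \bigl(|\,\underline{*}\,K(Q)\bigr)$ by the same two applications of Lemma \ref{lem:Kreweras_change_*} together with $K(|) = |$ and the mixed-associativity observation. The only cosmetic difference is that you check bijectivity of $K$ separately, which the paper's induction on the recursion renders unnecessary.
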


\begin{proof}
The map $K'$ is defined inductively by 
\begin{align*}
	K'(\emptyset) &= \emptyset \\ 
	K'(P \overline * | * Q) &= K'(P) * | \underline * K'(Q) \qquad \text{ for }P,Q \in \NCP.
\end{align*}
It suffices to show that $K$ verifies the same equalities. Note that already $K(\emptyset) = \emptyset$, and also $K(|) = |$. Then for $P,Q\in \NCP$, by Lemma \ref{lem:Kreweras_change_*},
$$
	K(P \overline * | * Q) = K(P) * K(| * Q) = K(P) * | \underline * K(Q).
$$
Regarding these equalities it is important to recall Remark \ref{rmk:associativity}.
\end{proof}

Henceforth, this map will be denoted by $K:\mathrm{NCP} \to \mathrm{NCP}$.


\section{The S-transform of multilinear function series}
\label{sec:S_transf_multilin_fct_series}


\subsection{Series of multilinear functions}
\label{ssec:series_multilin_functions}

We follow the terminology of Dykema in \cite{dykema2007strans}. Let $\mathbb{K}$ denote the base field of characteristic zero over which all algebraic structures are defined.

\begin{defn}
Let $B$ be a $\mathbb{K}$-algebra with unit $1$. A sequence of multilinear functions $f = (f_n)_{n\ge 0}$, $f_n : B^{\otimes n} \to B$ (with $B^{\otimes 0} = \mathbb{K}$) will be called a multilinear series. The set of these multilinear series will be called $\Mult [[B]]$. 
\begin{enumerate}
\item
We can multiply such series by defining for $f,g \in \Mult[[B]]$:
\begin{equation}
\label{multipl}
	(f \cdot g)_n(x_1, \ldots, x_n) := \sum_{k=0}^n f_k(x_1, \ldots, x_k) g_{n-k}(x_{k+1}, \ldots, x_n).
\end{equation}
$(\Mult[[B]], \cdot)$ is a monoid, with unit $1 := (\delta_{n,0}1)_{n\ge 0}$. We will often write $fg$ for $f\cdot g \in \Mult[[B]]$. 

\item
We can compose such series by defining for $f,g \in \Mult[[B]]$ such that $g_0=0$:
\begin{equation}
\label{compo}
	(f\circ g)_n(x_1, \ldots, x_n) 
	:= \sum_{\substack{n = k_1 + \cdots + k_l \\ k_i\ge 1}} f_l(g_{k_1}(x_1, \ldots, x_{k_1}), \ldots, g_{k_l}(x_{n-k_l+1}, \ldots, x_n)).
\end{equation}
The set $\{g\in \Mult[[B]], g_0=0\}$ is a monoid from this product $\circ$, with unit $I := (\delta_{n,1}\Id)_{n\ge 0}$. 
\end{enumerate}
Note that if $f\cdot g = 0$, then either $f=0$ or $g=0$. This is, however, not the case for the composition $\circ$.

\end{defn}

We define now groups, $(G_B^\inv ,\cdot )$ and  $(G_B^\dif , \circ)$, with respect to multiplication \eqref{multipl} respectively composition \eqref{compo} of elements in $\Mult[[B]]$:
\begin{align*}
	G_B^\inv &:= \{f \in \Mult [[B]] \mid f_0\in B^\times\} \subset \Mult[[B]] \\
	G_B^\dif &:= \{f \in \Mult [[B]] \mid f_0=0, f_1\in \GL(B)\} \subset \Mult[[B]],
\end{align*}
where $B^\times$ is the set of invertible elements of $B$. We note that replacing $f_0\in B^\times$ by $f_0=1$ and $f_1\in \GL(B)$ by $f_1=\Id$ would not change much. The set $G_B^\inv $ can be mapped to a subset of $G_B^\dif$, which we define as a third set 
\begin{equation}
\label{Ishiftset}
	G_B^I:= I \cdot G_B^\inv.
\end{equation}
Alternatively,  
\begin{equation}
\label{eq:GBI_alternative_def}
	G_B^I := \{f \in G_B^\dif \mid f_1(1)\in B^\times, \forall n\ge 1, f_n(x_1, x_2, \ldots, x_n) = x_1 f_n(1, x_2, \ldots, x_n)\}.
\end{equation}
For $f \in G_B^\inv$, we will denote by $f^{-1}$ the multiplicative inverse of $f$. The compositional inverse of $f \in G_B^\dif$ will be denoted by $f^{\circ-1}$. Regarding expressions involving both composition and multiplication, $\circ$ and $\cdot$, the former operation will be given precedence, i.e., $f \cdot g \circ h \cdot k = f\cdot (g\circ h) \cdot k$.

\begin{lem}
\label{lem:rightaction}
Composition and multiplication, $\circ$ and $\cdot$, are both associative and non-commutative. Moreover, composition is right-distributive over multiplication, that is, for any $f,g,h \in \mathrm{Mult}[[B]]$
\begin{equation}
\label{rightaction}
	(f \cdot g)\circ h = (f\circ h) \cdot (g\circ h).
\end{equation}
\end{lem}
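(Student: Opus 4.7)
The plan is to unfold the definitions \eqref{multipl} and \eqref{compo} and verify each claim by a direct combinatorial matching of the resulting sums. Associativity of $\cdot$ is the easiest: expanding both $((f\cdot g)\cdot h)_n$ and $(f\cdot (g\cdot h))_n$ through two applications of \eqref{multipl} gives the common triple sum
\[
\sum_{\substack{i+j+k=n \\ i,j,k\ge 0}} f_i(x_1,\ldots,x_i)\,g_j(x_{i+1},\ldots,x_{i+j})\,h_k(x_{i+j+1},\ldots,x_n),
\]
where associativity of the product in $B$ absorbs the internal parenthesisation. Non-commutativity of $\cdot$ is automatic: even for scalar $B$, $(f\cdot g)_n(x_1,\ldots,x_n)$ plugs the \emph{first} arguments into $f$ and the \emph{last} into $g$, an asymmetry that produces inequivalent functions for generic choices of $f,g$. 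For $\circ$ one argues similarly with $f,g\in G_B^\dif$ supported in degree one, with non-commuting linearisations $f_1,g_1\in\GL(B)$.

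For the associativity of $\circ$, the plan is to write out both $((f\circ g)\circ h)_n$ and $(f\circ(g\circ h))_n$ via two nested applications of \eqref{compo}. Both expansions are indexed by pairs of ordered compositions: in the first, by an outer composition $n=k_1+\cdots+k_l$ together with an inner composition $l=p_1+\cdots+p_m$; in the second, by an outer composition $n=n_1+\cdots+n_m$ together with inner compositions $n_j=k_{j,1}+\cdots+k_{j,p_j}$. The bijection between these index sets sends $(k_1,\ldots,k_l;p_1,\ldots,p_m)$ to the grouping $n_j=k_{p_1+\cdots+p_{j-1}+1}+\cdots+k_{p_1+\cdots+p_j}$, and matches summands term by term. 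This is the most notationally fiddly part of the proof, though conceptually it is just the two-step refinement structure that makes $\circ$ into a (non-symmetric) operadic composition on $\Mult[[B]]$.

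The main content is identity \eqref{rightaction}, the right-distributivity of $\circ$ over $\cdot$. I would expand both sides directly. On the left, applying \eqref{compo} to $(f\cdot g)\circ h$ and then \eqref{multipl} inside the resulting $(f\cdot g)_l$ yields
\begin{align*}
((f\cdot g)\circ h)_n(x_1,\ldots,x_n)
&=\sum_{n=k_1+\cdots+k_l}\sum_{p=0}^{l} f_p\bigl(h_{k_1}(\cdots),\ldots,h_{k_p}(\cdots)\bigr)\,g_{l-p}\bigl(h_{k_{p+1}}(\cdots),\ldots,h_{k_l}(\cdots)\bigr),
\end{align*}
where each "$\cdots$" stands for the corresponding consecutive block of the $x_i$'s. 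On the right, applying \eqref{multipl} to $(f\circ h)\cdot(g\circ h)$ and then \eqref{compo} to each factor yields
\begin{align*}
((f\circ h)\cdot(g\circ h))_n(x_1,\ldots,x_n)
&=\sum_{m=0}^{n}\,\sum_{m=k_1+\cdots+k_p} f_p(\cdots) \sum_{n-m=k_{p+1}+\cdots+k_l} g_{l-p}(\cdots).
\end{align*}
The two index sets are in obvious bijection: set $m=k_1+\cdots+k_p$ in one direction and concatenate in the other. Under this bijection the summands match identically, which proves \eqref{rightaction}. The only real obstacle throughout is bookkeeping -- making sure that the consecutive block of $x_i$'s assigned to each $h_{k_j}$ is the same on both sides -- but this is forced by the fact that in both expressions the $x$-arguments appear in their original left-to-right order.
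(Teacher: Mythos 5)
Your proof is correct, but it is worth noting that the paper does not actually prove this lemma at all: it simply cites Dykema (Proposition 2.3(vi) of \cite{dykema2007strans}). So your proposal supplies the direct verification that the paper outsources. The substance is right: for \eqref{rightaction}, expanding the left side via \eqref{compo} and then \eqref{multipl} inside $(f\cdot g)_l$ gives a sum over pairs (composition $n=k_1+\cdots+k_l$ into positive parts, cut point $0\le p\le l$), while the right side is indexed by (cut point $0\le m\le n$, composition of $m$, composition of $n-m$); the map $(k_1,\ldots,k_l;p)\mapsto(m=k_1+\cdots+k_p;(k_1,\ldots,k_p);(k_{p+1},\ldots,k_l))$ is the required bijection and the summands match because the $x_i$'s are consumed in consecutive blocks in their original order on both sides. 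The associativity arguments (triple Cauchy sum for $\cdot$, two-step refinement of compositions for $\circ$) are likewise standard and correct. One small slip: your justification of non-commutativity of $\cdot$ ``even for scalar $B$'' is wrong. For $B=\mathbb{K}$ every multilinear map $\mathbb{K}^{\otimes n}\to\mathbb{K}$ is determined by a single scalar, $\cdot$ degenerates to the Cauchy product of ordinary power series, and $\GL(\mathbb{K})=\mathbb{K}^\times$ is abelian, so both operations \emph{are} commutative there. Non-commutativity must be witnessed with a genuinely noncommutative ingredient, e.g.\ constants $f=b_1$, $g=b_2$ with $b_1b_2\neq b_2b_1$ for $\cdot$, or non-commuting $f_1,g_1\in\GL(B)$ for $\circ$; this is a one-line fix and does not affect the rest of the argument. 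You should also record explicitly that \eqref{rightaction} only makes sense when $h_0=0$, as required by the definition \eqref{compo}.
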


\begin{proof}
See Dykema \cite{dykema2007strans} (Proposition 2.3, (vi), Page 8), for details.
\end{proof}

\begin{lem}
\label{lem:G_B^I_group}
The set $G_B^I$ forms a group with respect to composition $\circ$.
\end{lem}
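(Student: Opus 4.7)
The plan is to verify that $G_B^I$ is a subgroup of the group $(G_B^\dif,\circ)$ defined just above, which boils down to three checks: membership of the identity $I$, closure under composition, and closure under compositional inversion. The first is immediate, since $I=I\cdot 1$ with $1=(\delta_{n,0}1)_{n\ge 0}\in G_B^\inv$.

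For closure under composition, my approach is to apply the right-distributivity of Lemma \ref{lem:rightaction}. Writing $f=I\cdot h$ and $g=I\cdot k$ with $h,k\in G_B^\inv$, one immediately gets
\[
f\circ g = (I\circ g)\cdot(h\circ g) = g\cdot(h\circ g) = I\cdot\bigl(k\cdot(h\circ g)\bigr),
\]
so everything reduces to the routine check that $k\cdot(h\circ g)\in G_B^\inv$, i.e.\ that its zeroth-order term is invertible. Because $g_0=0$, the composition $(h\circ g)_0$ collapses to $h_0$, giving $(k\cdot(h\circ g))_0=k_0 h_0\in B^\times$.

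The main obstacle is closure under inversion. Given $f=I\cdot h\in G_B^I$, the compositional inverse $g=f^{\circ -1}\in G_B^\dif$ already exists, and I would verify that it satisfies the alternative characterization \eqref{eq:GBI_alternative_def}. At order $1$, the equation $f_1(g_1(x))=g_1(x)h_0=x$ forces $g_1(x)=x h_0^{-1}$, hence $g_1(1)=h_0^{-1}\in B^\times$. For $n\ge 2$, isolating the $l=1$ term in the vanishing of $(f\circ g)_n$ yields the recursion
\[
g_n(x_1,\ldots,x_n)\,h_0 = -\sum_{\substack{l\ge 2,\ k_1+\cdots+k_l=n\\ k_i\ge 1}} f_l\bigl(g_{k_1}(x_1,\ldots,x_{k_1}),\ldots,g_{k_l}(x_{n-k_l+1},\ldots,x_n)\bigr).
\]
Induction on $n$ then closes the argument: the hypothesis gives $g_{k_1}(x_1,\ldots,x_{k_1})=x_1\,g_{k_1}(1,x_2,\ldots,x_{k_1})$, and the defining property of $f\in G_B^I$ applied in the first slot of each $f_l$ pulls the factor $x_1$ out of every summand; right-multiplying by $h_0^{-1}$ then delivers the required factorization $g_n(x_1,\ldots,x_n)=x_1\,g_n(1,x_2,\ldots,x_n)$.

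The delicate point is exactly this inductive step, where one has to trace a single $x_1$-factor through nested substitutions and arrange for the $G_B^I$-structures of $f$ and of the lower-order pieces of $g$ to reinforce each other; the individual manipulations are routine, but the argument only closes because both structural facts are available simultaneously.
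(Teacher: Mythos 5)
Your proof is correct. The identity and closure-under-composition steps coincide with the paper's: both write $f=I\cdot h$, $g=I\cdot k$ and apply the right-distributivity of Lemma \ref{lem:rightaction} to get $f\circ g=I\cdot\bigl(k\cdot(h\circ g)\bigr)$. Where you genuinely diverge is closure under inversion. The paper stays entirely at the level of the algebra of series: applying \eqref{rightaction} to $I=f\circ f^{\circ-1}=(Ih)\circ f^{\circ-1}$ gives $I=f^{\circ-1}\cdot(h\circ f^{\circ-1})$, and since $h\circ f^{\circ-1}\in G_B^\inv$ one can right-multiply by its multiplicative inverse to read off $f^{\circ-1}=I\cdot(h\circ f^{\circ-1})^{-1}\in I\cdot G_B^\inv$ in two lines. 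You instead verify the componentwise characterization \eqref{eq:GBI_alternative_def} by isolating the $l=1$ term of $(f\circ g)_n=0$ and propagating the factorization $g_{k_1}(x_1,\ldots)=x_1g_{k_1}(1,\ldots)$ through the first slot of each $f_l$ by induction on $n$; this is sound (the key step $f_l(bz,\ldots)=b\,f_l(z,\ldots)$ is exactly the $G_B^I$-property of $f$), but it is longer and, unlike the paper's argument, does not produce the closed-form expression $f^{\circ-1}=I\cdot(h\circ f^{\circ-1})^{-1}$, which the paper reuses immediately afterwards to derive the fixed-point equation \eqref{Strans} for the S-transform in Lemma \ref{lem:fixed_point_S_f}. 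So your route proves the lemma, but you would need to redo a small amount of work later where the paper simply cites its own proof.
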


\begin{proof}
Let $f, g\in G_B^I \subset G_B^\dif$. As $I = I \cdot 1\in G_B^I$, it suffices to show that $f\circ g$ and $f^{\circ -1}$ (calculated in $G_B^\dif$) are elements of $G_B^I$. Writing $f=IF$ and $g=IG$, for $F,G \in G_B^\mathrm{inv}$, and using \eqref{rightaction} in Lemma \ref{lem:rightaction}, we have
\begin{align*}
	f \circ g = (IF) \circ (IG) 
	&= IG \cdot (F\circ(IG))\\
	&= I\cdot \big(G \cdot (F\circ(IG))\big) \in G_B^I. 
\end{align*}
Moreover, 
$$
	I = f\circ f^{\circ-1} = (IF) \circ f^{\circ-1} = f^{\circ-1}\cdot F\circ f^{\circ-1}.
$$ 
But $F\circ f^{\circ-1} \in G_B^\inv$ since $F\in G_B^\inv$, so 
$$
	f^{\circ-1} = I \cdot (F\circ f^{\circ-1})^{-1} \in G_B^I.
$$
\end{proof}

\begin{defn} (S-transform)
\label{def:S_transform}
Let $f\in G_B^I$. Then, as $f^{\circ-1}\in G_B^I$, we can write 
$$
	f^{\circ-1} = I \cdot S_f,
$$ 
where $S_f \in G_B^\mathrm{inv}$ is called the $S$-transform of $f$.
\end{defn}

\begin{lem}
\label{lem:fixed_point_S_f}
Let $f = IF \in G_B^I$, $F \in G_B^\mathrm{inv}$. Its S-transform is the unique solution to the fixed point equation:
\begin{equation}
\label{Strans}
	S_f = (F\circ (IS_f))^{-1}.
\end{equation}
\end{lem}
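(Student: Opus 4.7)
The plan is to manipulate the defining identity $f \circ f^{\circ -1} = I$ directly, by substituting the factorisations $f = IF$ and $f^{\circ -1} = I\cdot S_f$ and then using the structural lemmas already established. The core ingredients are (a) the right-distributivity of composition over multiplication (Lemma~\ref{lem:rightaction}), (b) the fact that $I \circ g = g$ for every $g \in G_B^\dif$, which is immediate from the composition formula \eqref{compo} since $I_n = \delta_{n,1}\,\Id$, and (c) the injectivity of left-multiplication by $I$ on $\mathrm{Mult}[[B]]$, which follows from the identity $(I\cdot h)_n(x_1,\ldots,x_n) = x_1\, h_{n-1}(x_2,\ldots,x_n)$ (specialising $x_1 = 1$ recovers $h$).

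Concretely, starting from $I = f \circ f^{\circ -1} = (IF) \circ (I S_f)$, I would apply \eqref{rightaction} to obtain
\[
    I \;=\; \bigl(I \circ (I S_f)\bigr)\cdot \bigl(F \circ (I S_f)\bigr) \;=\; (I S_f)\cdot \bigl(F \circ (I S_f)\bigr),
\]
using (b). Associativity of $\cdot$ then rewrites this as $I \cdot \bigl(S_f \cdot (F \circ (I S_f))\bigr) = I\cdot 1$, and the cancellation principle (c) yields
\[
    S_f \cdot \bigl(F \circ (I S_f)\bigr) \;=\; 1.
\]
To conclude, I need to know that $F \circ (I S_f)$ is multiplicatively invertible. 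This is automatic from $\bigl(F\circ (IS_f)\bigr)_0 = F_0 \in B^\times$ (the composition formula \eqref{compo} produces $F_0$ as its degree-zero term, since $(IS_f)_0 = 0$), so $F\circ (IS_f) \in G_B^\inv$ and the identity \eqref{Strans} follows.

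For uniqueness, suppose $T \in G_B^\inv$ satisfies $T = (F\circ (IT))^{-1}$. Running the chain of equalities above in reverse, I would multiply by $F\circ (IT)$, then by $I$ on the left, use associativity and (b) once more to obtain $(IF)\circ(IT) = I$, i.e.\ $f \circ (IT) = I$. Since $f \in G_B^\dif$ admits a unique compositional inverse, $IT = f^{\circ -1} = I\cdot S_f$, and one final application of (c) gives $T = S_f$. The only step requiring care is the injectivity of left-multiplication by $I$, since elsewhere in the paper it is noted that $f\cdot g = 0$ implies $f = 0$ or $g = 0$ but the sharper cancellation property is not explicitly stated; establishing it from the shift formula above is what lets both existence and uniqueness go through cleanly.
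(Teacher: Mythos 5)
Your proof is correct. For existence it is essentially the paper's route: the identity $I=(IF)\circ(IS_f)=(IS_f)\cdot\big(F\circ(IS_f)\big)$ obtained from right-distributivity \eqref{rightaction} is exactly the computation the paper imports from the proof of Lemma~\ref{lem:G_B^I_group}, and your explicit justification of left-cancellation by $I$ (via $(I\cdot h)_n(x_1,\ldots,x_n)=x_1h_{n-1}(x_2,\ldots,x_n)$) is a welcome addition, since the paper tacitly uses this both here and in order for $S_f$ to be well defined at all. Where you genuinely diverge is uniqueness: the paper argues coefficient by coefficient, showing inductively that $g_k$ for $k<n$ determines $\big((F\circ(Ig))^{-1}\big)_n$ and hence $g_n$, whereas you reverse the algebraic chain to get $f\circ(IT)=I$ and invoke uniqueness of compositional inverses in the group $(G_B^\dif,\circ)$. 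Your version is cleaner and avoids Möbius inversion, at the cost of leaning on the group property of $G_B^\dif$ (asserted but not proved in the paper) and on the observation that $IT\in G_B^\dif$ when $T\in G_B^\inv$; the paper's induction is self-contained and is essentially the same degree-by-degree argument that would establish that group property in the first place. One small point worth making explicit in your write-up: any solution $T$ of $T=(F\circ(IT))^{-1}$ automatically has $T_0=F_0^{-1}\in B^\times$, so restricting attention to $T\in G_B^\inv$ loses no generality.
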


\begin{proof}
In the proof of Lemma \ref{lem:G_B^I_group}, it is shown that $f^{\circ-1} = I (F\circ f^{\circ-1})^{-1}$. Therefore 
$$
	S_f = (F\circ f^{\circ-1})^{-1} = (F\circ (IS_f))^{-1}.
$$ 
Regarding unicity, let $g$ be a solution of $g = (F\circ(Ig))^{-1}$. The multilinear functions $g_k$, $k < n$, determine the $j$-linear functions $(F\circ(Ig))_j$ for $j\le n$, which in turn determine the $n$-linear function $((F\circ(Ig))^{-1})_n$, using classical Möbius inversion. So $g$ is indeed uniquely defined by induction.
\end{proof}

Observe that the foregoing lemma also provides formulas for $S_f^{-1}IS_f$ (which will appear later).

\begin{coroll}
\label{cor:other_expr_S_f-1IS_f}
Let $f = IF \in G_B^I$. Then 
$$
	S_f^{-1}IS_f = (FI)\circ (IS_f) = (FI)\circ (IF)^{\circ -1}.
$$
\end{coroll}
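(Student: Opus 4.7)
The plan is to read the two equalities as a short manipulation using Lemma \ref{lem:fixed_point_S_f} and Lemma \ref{lem:rightaction}. The key identity to unlock everything is the fixed point equation
\[
	S_f^{-1} = F \circ (IS_f),
\]
rewritten from \eqref{Strans}.

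First I would establish the second equality $(FI)\circ (IS_f) = (FI)\circ (IF)^{\circ -1}$, which is immediate: by Definition \ref{def:S_transform}, $f^{\circ-1}=IS_f$, and since $f = IF$, we get $(IF)^{\circ -1}=IS_f$; substituting into the righthand argument yields the claim.

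Next I would prove the first equality $S_f^{-1}IS_f = (FI)\circ (IS_f)$. Apply right-distributivity \eqref{rightaction} from Lemma \ref{lem:rightaction} to the product $F \cdot I$ being composed with $IS_f$:
\[
	(FI)\circ (IS_f) = \bigl(F \circ (IS_f)\bigr) \cdot \bigl(I \circ (IS_f)\bigr).
\]
Now $I \circ g = g$ for every $g$ with $g_0=0$ (inspecting the defining sum \eqref{compo}, only $l=1,\ k_1=n$ contributes, giving $I_1(g_n)=g_n$). Therefore $I \circ (IS_f) = IS_f$. Combining with the fixed point identity $F \circ (IS_f)=S_f^{-1}$ gives
\[
	(FI)\circ (IS_f) = S_f^{-1} \cdot (IS_f) = S_f^{-1} I S_f,
\]
using associativity of $\cdot$.

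No step looks like a genuine obstacle: the corollary is really just a repackaging of Lemma \ref{lem:fixed_point_S_f} together with right-distributivity. The only point that requires a moment of care is checking that $I$ is a left identity for composition on series with zero constant term, which follows directly from \eqref{compo}. Everything else is associativity.
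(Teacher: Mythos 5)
Your proof is correct and is exactly the argument the paper intends: the corollary is stated as an observation following Lemma \ref{lem:fixed_point_S_f}, and your derivation via the fixed-point identity $F\circ(IS_f)=S_f^{-1}$, right-distributivity \eqref{rightaction}, and $I$ being the unit for $\circ$ simply makes that observation explicit. No issues.
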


\medskip

From one or several multilinear series, one can construct other multilinear maps:

\begin{defn}
\label{def:treenesting}
For $f,g \in \Mult[[B]]$ and a planar binary tree $\tau \in Y_n$, which we write as
$$
	\tau = \toverrightcomb{\tau_1}{\tau_2}{\tau_k}
$$
we define:
 
\begin{enumerate}
\item
\label{item1}
$f_\tau : B^{\otimes n} \to B$ inductively as follows:
$$
	\left\{\begin{array}{rcl}
	f_{|} &=& 1 \in B \\ 
	f_\tau(x_1, \ldots, x_n) &=& f_k\Big(f_{\tau_1}(x_1, \ldots, x_{j_1-1}) x_{j_1}, ~\ldots~, 
	~f_{\tau_k}(x_{j_{k-1}+1}, \ldots, x_{n-1}) x_n \Big) \in B 	
	\end{array}\right.
$$

\item
\label{item2}
$(f\cup g)_\tau : B^{\otimes n} \to B$ inductively as follows:
$$
	\left\{\begin{array}{rcl}
	(f\cup g)_{|} 
	&=& 1 \in B \\ 
	(f\cup g)_\tau(x_1, \ldots, x_{n}) 
	&=& g_k\Big((g\cup f)_{\tau_1}(x_1, \ldots, x_{j_1-1}) x_{j_1},~\ldots~,~ (g\cup f)_{\tau_k}	(x_{j_{k-1}+1}, 
		\ldots, x_{n-1}) x_n\Big) \in B 
	\end{array}\right.
$$
\end{enumerate}
\end{defn}

\begin{rmk}
\label{rmk:def_of_f_tau}
\begin{enumerate}
\item
	We remark that $(f\cup g)_\tau$ is defined by alternating between $f$ and $g$. Note that $(f\cup g)$ as a whole (without indices) is not defined, in particular, it is not an element of $\Mult[[B]]$. 
\item
	The definition of $f_\tau$ can be extended to a slightly more general setting. If $C$ is a unital algebra, $B$ is a left $C$-module, and $f=(f_n)_{n\ge 0}$ consists of linear maps $f_n:B^{\otimes n} \to C$, then $f_{\tau}$ is still defined, for any $\tau\in Y$. In this case, $f_\tau$ will be a linear map from $B^{\otimes |\tau|}$ to $C$.
\end{enumerate}
\end{rmk}

\begin{ex}
Suppose 
$$
\tau = {\begin{tikzpicture}[scale = 1, baseline={(0,0.5)}]
\draw (0,0) -- (.5,.5) -- (1,1) (.5,1.5) -- (1,2) (-.5,.5) -- (0,1) (1,1) -- (.5,1.5) (0,0) -- (-.5,.5) -- (-1,1)  (0,1) -- (-.5,1.5);
\draw[fill, color=red] (-1,1) circle (.03) node[below]{1};
\draw[fill, color=blue] (-.5,.5) circle (.03) node[below]{2};
\draw[fill, color=red] (-.5,1.5) circle (.03) node[below]{3};
\draw[fill, color=blue] (0,1) circle (.03) node[below]{4};
\draw[fill, color=red] (0,0) circle (.03) node[below]{5};
\draw[fill, color=red] (.5,.5) circle (.03) node[below]{6};
\draw[fill, color=blue] (.5,1.5) circle (.03) node[below]{7};
\draw[fill, color=blue] (1,2) circle (.03) node[below]{8};
\draw[fill, color=red] (1,1) circle (.03) node[below]{9};
\end{tikzpicture}},
$$ 
where only internal vertices are shown. For $f,g \in \Mult[[B]]$, we have
\begin{align*}
	f_\tau(x_1, \ldots, x_9) 
	&= f_3\Big( f_2\big( f_1(x_1)x_2, f_1(x_3)x_4\big) x_5, ~ x_6, ~ f_2(x_7,x_8) x_9\Big)\\
	({\color{blue} f} \cup {\color{red}g})_\tau (x_1, \ldots, x_9) 
	&= {\color{red}g_3\Big( {\color{blue}f_2\big( {\color{red}g_1(x_1)}x_2, {\color{red}g_1(x_3)}x_4\big)} x_5, 
	~ x_6, ~ {\color{blue}f_2(x_7,x_8)} x_9\Big)} \\ 
	({\color{blue}g} \cup {\color{red}f})_\tau (x_1, \ldots, x_9) 
	&= {\color{red}f_3\Big( {\color{blue}g_2\big( {\color{red}f_1(x_1)}x_2, {\color{red}f_1(x_3)}x_4\big)} x_5, 
	~ x_6, ~ {\color{blue}g_2(x_7,x_8)} x_9\Big)}.
\end{align*}
\end{ex}

\begin{nota}
A tree of the form $\tau = \sigma/\tO = \tover \sigma$ will be called a \textit{right-planted tree}.
\end{nota}

The substitution of a tree by right-planted trees behaves well with respect to $f_\tau$:

\begin{lem}
\label{lem:compo_multilinear_series_trees}
Let $\tau \in Y_k$, $\sigma_1 / \tO = \tover{\sigma_1}, \ldots, \sigma_k / \tO = \tover{\sigma_k}\in Y/\tO$. Let $n = \sum_{i=1}^k |\sigma_i / \tO|$. Then for $f \in I \cdot \mathrm{Mult}[[B]]$ and for all $x_1, \ldots, x_n\in B$, 
$$
	f_{\tau\circ\big(\smtover{\sigma_1}, \ldots, \smtover{\sigma_k}\big)}(x_1, \ldots, x_n)
	= f_\tau\Big(f_{\sigma_1}(x_1, \ldots, x_{|\sigma_1|})x_{|\sigma_1|+1}, \ldots, 
	~ f_{\sigma_k}(x_{n-|\sigma_k|}, \ldots, x_{n-1})x_n\Big).
$$
\end{lem}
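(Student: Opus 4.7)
My plan is to proceed by induction on $|\tau|$, using $\tau$'s right-comb decomposition, together with a factorisation sub-lemma that handles the interaction with the grafting operation $/$.

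The sub-lemma to establish first is: for any $T, \sigma \in Y$,
\[
f_{T/\sigma}(x_1, \ldots, x_{|T|+|\sigma|}) = f_T(x_1, \ldots, x_{|T|}) \cdot f_\sigma(x_{|T|+1}, \ldots, x_{|T|+|\sigma|}),
\]
where, by Definition \ref{def:grafttrees}, $T/\sigma$ is $\sigma$ with its leftmost leaf replaced by $T$ (extended by the convention $T/| := T$). The proof is by induction on $|\sigma|$ via its right-comb decomposition $\sigma = \toverrightcomb{\alpha_1}{\alpha_2}{\alpha_p}$; the right-comb of $T/\sigma$ differs only in its first entry ($T/\alpha_1$, or $T$ if $\alpha_1=|$). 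The key point is that $f \in I \cdot \Mult[[B]]$ forces each $f_k$ to be left $B$-linear in its first argument, $f_k(by_1, y_2, \ldots, y_k) = b \cdot f_k(y_1, y_2, \ldots, y_k)$ for $b \in B$, which lets the factor $f_T(x_1, \ldots, x_{|T|})$ be pulled cleanly to the left of the outermost $f_p$ coming from $\sigma$'s right-comb.

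For the main statement, the base case $\tau = |$ is immediate. For the inductive step, write $\tau = \toverrightcomb{\tau_1}{\tau_2}{\tau_p}$ and denote the positions of the right-spine vertices by $j_1 < \cdots < j_p$; set $T_i := \tau_i \circ (\tover{\sigma_{j_{i-1}+1}}, \ldots, \tover{\sigma_{j_i-1}})$. A structural analysis of the operadic composition on planar binary trees (performed by iterated Catalan decomposition $\tau = \tovunder{\tau_1}{\tau'}$) yields
\[
\tau \circ (\tover{\sigma_1}, \ldots, \tover{\sigma_k}) = \toverrightcomb{T_1/\sigma_{j_1}}{T_2/\sigma_{j_2}}{T_p/\sigma_{j_p}}.
\]
The clean form is due to the specific shape of each substituted $\tover{\sigma_{j_i}} = \sigma_{j_i}/\tO$: its extra right leaf cleanly absorbs the right-neighbourhood of the $i$-th spine vertex, while the substituted left subtree $T_i$ ends up attached at the leftmost leaf of $\sigma_{j_i}$, producing precisely the insertion $T_i / \sigma_{j_i}$. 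Now unfolding $f$ on this right-comb, applying the sub-lemma to each $f_{T_i/\sigma_{j_i}} = f_{T_i} \cdot f_{\sigma_{j_i}}$, and invoking the inductive hypothesis on each $\tau_i$ (rewriting $f_{T_i}$ as $f_{\tau_i}(y_{j_{i-1}+1}, \ldots, y_{j_i-1})$ with $y_j = f_{\sigma_j}(\cdots)x_\cdot$), I recover exactly $f_\tau(y_1, \ldots, y_k)$, the right-hand side.

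The main obstacle will be the structural claim about the right-comb decomposition of $\tau \circ (\tover{\sigma_1}, \ldots, \tover{\sigma_k})$. The fact that each substituted tree is right-planted is what makes the right-neighbourhood decouple from the $\sigma_{j_i}$; without this, the left block of each spine vertex would be a tree more complicated than $T_i/\sigma_{j_i}$. Once this structural fact is nailed down, the sub-lemma (which is where the hypothesis $f \in I \cdot \Mult[[B]]$ enters) splits each block, and the $p$-fold product $f_p(\cdots)$ matches directly.
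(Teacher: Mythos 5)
Your proposal is correct and follows essentially the same route as the paper: induction on $|\tau|$ through its right-comb decomposition, the structural identification of $\tau\circ\big(\tover{\sigma_1},\ldots,\tover{\sigma_k}\big)$ as a right-comb whose spine carries the left subtrees $T_i/\sigma_{j_i}$, and the left $B$-linearity coming from $f\in I\cdot\Mult[[B]]$ to split each block. The only difference is organisational: you isolate the factorisation $f_{T/\sigma} = f_T\cdot f_\sigma$ as an explicit sub-lemma, whereas the paper uses it silently in the step that rewrites $f_{T_i}(x,\ldots,x)\,f_{\sigma_{k_i}}(x,\ldots,x)$ as $f_{\rho_i}(x,\ldots,x)$ with $\rho_i = T_i/\sigma_{k_i}$ — making that step explicit is a genuine improvement in readability, since it is precisely where the hypothesis $f\in I\cdot\Mult[[B]]$ enters.
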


\begin{proof}
Note that we have used the operadic notation \eqref{eq:operadprod} mentioned in Remark \ref{emk:operad}. We prove the lemma by induction on $|\tau|$. Let us write 
$$
	\tau = \toverrightcomb{\tau_1}{\tau_2}{\tau_k},
$$ 
thus
$$
	f_\tau(x_1,\ldots,x_n)=
	f_k \Big(f_{\tau_1} 
	 (x_1, \ldots, x_{|\tau_1|}) x_{|\tau_1|+1}, \ldots, ~ f_{\tau_k}(x_{n-|\tau_k|}, \ldots, x_{n-1})x_n\Big).
$$
Simplifying notation, we replace the $x_i$'s by $x$. This is justified since the order of the $x_i$'s does not change. We prove the equality of the Lemma by deriving the left-hand-side from the right-hand-side.
\allowdisplaybreaks
\begin{align*}
	f_\tau\Big(&f_{\sigma_1}(x_1, \ldots, x_{|\sigma_1|})x_{|\sigma_1|+1}, \ldots, ~ f_{\sigma_k}(x_{n-|\sigma_k|}, \ldots, x_{n-1})x_n\Big). \\
	&= f_\tau\Big( f_{\sigma_1}(x, \ldots, x)x, \ldots, ~ f_{\sigma_k}(x, \ldots, x)x\Big) \\
	&= f_j\Big(f_{\tau_1}\big(f_{\sigma_1}(x, \ldots, x)x, \ldots, ~ f_{\sigma_{k_1-1}}(x, \ldots, x)x\big)f_{\sigma_{k_1}}(x, \ldots, x)x, \ldots \\
	& \hspace{70pt} \ldots, f_{\tau_j}\big(f_{\sigma_{k_{j-1}+1}}(x, \ldots, x)x, \ldots, ~ f_{\sigma_{k-1}}(x, \ldots, x)x\big)f_{\sigma_k}(x, \ldots, x)x\Big) \\ 
	&\overset{(\star)}{=} f_j\Big( f_{\tau_1\circ \big(\smtover{\sigma_1}, \ldots, \smtover{\sigma_{k_1-1}}\big)}(x, \ldots, x)f_{\sigma_{k_1}}(x, \ldots, x)x, 
		\ldots, f_{\tau_j\circ \big(\smtover{\sigma_{k_{j-1}+1}}, \ldots, \smtover{\sigma_{k-1}}\big)}(x, \ldots, x)f_{\sigma_k}(x, \ldots, x)x \Big) \\ 
	&= f_j\Big( f_{\rho_1}(x, \ldots, x)x, \ldots, f_{\rho_j}(x, \ldots, x)x \Big) \\ 
	&= f_{\smtoverrightcomb{\rho_1}{\rho_j}}(x, \ldots, x)
\end{align*}
with $\rho_i = \big(\tau_i\circ(\smtover{\sigma_{k_{i-1}+1}}, \ldots, \smtover{\sigma_{k_i-1}}) \big) / \sigma_{k_i}$, and therefore
$$
	\toverrightcomb{\rho_1}{\rho_2}{\rho_j} =
	\begin{tikzpicture}[scale = .5, baseline={(0,0.4)}]
	\draw (0,-.5)--(0,0)--(3.5,3.5) (0,0)--(-.5,.5) (1,1)--(.5,1.5) (3,3)--(2.5,3.5); \draw (1.1,2.7) node{\rotatebox[origin=c]{80}{$\ddots$}} (0,0);
	\draw (0,.4) node[above left]{$\sigma_{k_1}$} (1,1.4) node[above left]{$\sigma_{k_2}$} (3,3.4) node[above left]{$\sigma_k$};
	\draw (-1.3, 1.3)--(-2, 2) (-.3, 2.3)--(-1, 3) (1.7, 4.3)--(1, 5);
	\draw (-1.8, 1.4) node[above left]{$\tau_1\circ(\smtover{\sigma_1}, \ldots, \smtover{\sigma_{k_1-1}})$};
	\draw (-.8, 2.7) node[above left]{$\tau_2\circ(\smtover{\sigma_{k_{1}+1}}, \ldots, \smtover{\sigma_{k_2-1}})$};
	\draw (1.2, 4.5) node[above left]{$\tau_j\circ(\smtover{\sigma_{k_{j-1}+1}}, \ldots, \smtover{\sigma_{k-1}})$};
	\end{tikzpicture}
	= \tau \circ\Big(\tover{\sigma_1}, \ldots, \tover{\sigma_k}\Big).
$$
\end{proof}

Note that the induction hypothesis is used on the $\tau_i$ in equality ($\star$), as we use for example
\[
	f_{\tau_1}\big(f_{\sigma_1}(x, \ldots, x)x, \ldots, ~ f_{\sigma_{k_1-1}}(x, \ldots, x)x\big) = f_{\tau_1\circ \big(\smtover{\sigma_1}, \ldots, \smtover{\sigma_{k_1-1}}\big)}(x, \ldots, x)
\]

The use of \eqref{eq:operadprod} in this lemma suggests an operadic point of view. In Appendix \ref{app:operad_pt_view}, a more algebraic (i.e., transparent) proof of Lemma \ref{lem:compo_multilinear_series_trees} is given.


\subsection{The boxed convolution operations}
\label{ssec:boxed_conv}


%
%
%

\begin{defn}
\label{def:R_and_Y^b}
Define a map $R : Y \to Y$ recursively by
\begin{align*}
	R(|) &= | \\ 
	R\Big( \tovunder \sigma\tau \Big) &= \trecb{R(\sigma)}{R(\tau)}.
\end{align*}
Denote  
$$
	Y^{b}_{2n} := R(Y_n) \ \text{ and }\ Y^b := R(Y).
$$
Observe that $Y^b_{2n}\subseteq Y_{2n}$, and that $R$ is injective. Moreover, note that
\begin{equation}
	R\Big(\toverrightcomb{\sigma_1}{\sigma_2}{\sigma_k}\Big) = \mathrm{RC}(R(\sigma_1), \ldots, R(\sigma_k)), \label{eq:R_right_comb_RC}
\end{equation}
where $\mathrm{RC}$ is the rotated comb tree defined by
\begin{equation}
\label{eq:right_comb_RC_def}
	\mathrm{RC}(\sigma_1, \ldots, \sigma_j) := ~\mathbin{
	\begin{tikzpicture}[scale=.25, baseline={(0,.5)}]
	\draw (0,-.5)--(0,0)--(4.5,4.5) (0,0)--(-1.5,1.5) (-1,1)--(-.5,1.5) (4,4)--(2.5,5.5) (3,5)--(3.5,5.5);
	\draw (0,2) node{$\sigma_1$} (4,6) node{$\sigma_j$} (1.3,3.7) node{\rotatebox[origin=c]{80}{$\ddots$}};
	\end{tikzpicture}}.
\end{equation}
\end{defn}

\begin{rmk}
\label{rmk:Yb_Catalan_pair}
The set $Y^b_{2n}$, together with the map $(\sigma, \tau) \mapsto \trecb\sigma\tau$, form a Catalan pair.
\end{rmk}

\begin{prop}
\label{prop:Krewerstree}
It holds that
\begin{align*}
	\varphi(Y^b_{2n}) &= \{P \cup K(P) \mid P \in \mathrm{NCP}_n\},
\end{align*}
where $K(P)$ is the Kreweras complement of the noncrossing partition $P  \in  \mathrm{NCP}$.
\end{prop}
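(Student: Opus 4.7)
My plan is to prove by induction on $|\tau|$ that for every $\tau \in Y_n$, the noncrossing partition $Q_\tau := \varphi(R(\tau)) \in \NCP_{2n}$ has the form $P \cup K(P)$ for some $P \in \NCP_n$. The proposition will then follow from a cardinality argument: the map $P \mapsto P \cup K(P)$ is injective (restriction to odd positions recovers $P$), so both $\{P \cup K(P) : P \in \NCP_n\}$ and $\varphi(Y^b_{2n})$ have cardinality $C_n$, and the established inclusion forces equality.

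The core of the induction is the simultaneous establishment of two properties of $Q_\tau$:
(i) every block of $Q_\tau$ lies entirely among the odd-indexed positions of $[2|\tau|]$ or entirely among the even-indexed positions;
(ii) for $|\tau|\ge 1$, the partition $Q_\tau$ has exactly $|\tau|+1$ blocks.
Both are proved by the same induction using the recursion $R(\tovunder{\sigma_1}{\sigma_2}) = \tovunder{\tovunder{|}{R(\sigma_1)}}{R(\sigma_2)}$ and a bookkeeping of the induced left-to-right ordering: inside $R(\tovunder{\sigma_1}{\sigma_2})$, two new vertices sit at positions $1$ (odd) and $2|\sigma_1|+2$ (even), the subtree $R(\sigma_1)$ occupies positions $2,\ldots,2|\sigma_1|+1$ (a $+1$ shift, flipping parities), and $R(\sigma_2)$ occupies positions $2|\sigma_1|+3,\ldots,2|\tau|$ (a shift by an even amount, preserving parities). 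An auxiliary inductive claim --- that the root of $R(\sigma)$ occupies an even position inside $R(\sigma)$ whenever $\sigma \neq |$ --- then ensures that the right-arm attaching the new left vertex to the root of $R(\sigma_1)$ links two odd-parity vertices (and symmetrically on the right), which combined with the inductive parity-monochromaticity of blocks inside $\varphi(R(\sigma_i))$ yields (i). Property (ii) is a direct counting argument: each new vertex either stands alone as a fresh singleton block (when the corresponding $\sigma_i$ equals $|$) or merges with an existing block via its right arm, and in every case the total comes out to $|\tau|+1$.

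With (i) and (ii) in hand, let $P$ and $Q$ be the restrictions of $Q_\tau$ to the odd- and even-indexed positions respectively, viewed as elements of $\NCP_n$. Property (i) gives $Q_\tau = P \cup Q$, and the noncrossing property of $Q_\tau$ forces $Q$ to refine $K(P)$ by the maximality clause in the definition of the Kreweras complement. The classical identity $(\text{number of blocks of }P)+(\text{number of blocks of }K(P)) = n+1$ for $P \in \NCP_n$, $n\ge 1$ (see e.g.~\cite{nica_speicher_book}), combined with (ii), gives that $Q$ and $K(P)$ have the same number of blocks; a refinement with matching block count is an equality, so $Q = K(P)$ and hence $Q_\tau = P \cup K(P)$.

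The main obstacle will be property (i): propagating parity-monochromaticity through the recursion requires the auxiliary claim about root positions together with a careful analysis of how right-arm chains reassemble after the two shifts, which behave oppositely on parities. Once (i) is in hand, property (ii) and the concluding identification of $Q_\tau$ with $P \cup K(P)$ are routine.
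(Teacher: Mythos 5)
Your proposal is correct, and while it shares the paper's top-level skeleton (prove the inclusion $\varphi(Y^b_{2n}) \subseteq \{P\cup K(P)\}$ by induction, then conclude by a cardinality count, using that $R$ and $\varphi$ are injective and that $P \mapsto P\cup K(P)$ is injective), the heart of your induction is genuinely different. The paper tracks the partition explicitly through the recursion: writing $\varphi(\sigma)=P\cup K(P)$ and $\varphi(\tau)=Q\cup K(Q)$, it computes $\varphi\big(\protect\trecb\sigma\tau\big) = \big((|\,\underline*\,K(P))*Q\big)\cup K\big((|\,\underline*\,K(P))*Q\big)$ diagrammatically, verifying the Kreweras relation directly via Lemma \ref{lem:Kreweras_change_*}; this has the side benefit of exhibiting the recursive formula for which $P$ actually occurs. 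You instead propagate two softer invariants --- parity-monochromaticity of the blocks and the block count $|\tau|+1$ --- and then force $Q=K(P)$ by combining the maximality clause in the definition of the Kreweras complement with the classical identity that $P$ and $K(P)$ have $n+1$ blocks in total (a fact external to the paper but standard and available in \cite{nica_speicher_book}). Your route avoids any explicit manipulation of the operations $*$, $\underline*$, $\overline*$ and of Lemma \ref{lem:Kreweras_change_*}, at the cost of the parity bookkeeping (the $+1$ shift flipping parities on the left factor, the even shift preserving them on the right, and the auxiliary observation that the root of $R(\sigma)$ always sits at the even position $2|\sigma_1|+2$) and of not identifying the resulting $P$ explicitly. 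Both arguments are complete; yours is self-contained modulo the two standard Kreweras facts it invokes, while the paper's yields more structural information that is reused in spirit elsewhere (compare the relative Kreweras complement appearing after Definition \ref{def:Pi_tau_subset_Y^be}).
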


\begin{proof}
Define 
$$
	\NCP^b_{2n} := \{P \cup K(P) \mid P \in \NCP_n\}.
$$
Since $|\NCP^b_{2n}| = |\NCP_n| = \frac{1}{n+1}\binom{2n}{n} = |Y^b_{2n}|$ (the last equality follows from the fact that $Y^b_{2n}$ admits a Catalan map by Remark \ref{rmk:Yb_Catalan_pair}), we only need to show that $\varphi(Y^b_{2n}) \subseteq \NCP^b_{2n}$. Let us show this by induction on $n$. Let $\sigma \in Y^b_{2k}$, $\tau \in Y^b_{2l}$, and write $\varphi(\sigma) = P\cup K(P)$ and $\varphi(\tau) = Q\cup K(Q)$ with $P\in \NCP_k$ and $Q\in \NCP_l$. Then 
\allowdisplaybreaks
\begin{align*}
	\varphi\Big(\trecb\sigma\tau\Big) 
	&= \big(|\underline * \varphi(\sigma)\big) * \big(|\underline * \varphi(\tau)\big) \\
	&= \big(|\underline * (P\cup K(P))\big) * \big(|\underline * (Q\cup K(Q))\big) \\
	&= \big((| \underline * K(P)) * Q\big) \cup \big((P * |) \underline * K(Q)\big) \\ 
	&= \big((| \underline * K(P)) * Q\big) \cup K\big((| \underline * K(P)) * Q\big) \in \NCP^b_{2n}.
\end{align*}
The last two equalities can be seen using the diagrammatical representation of noncrossing partitions. Here is how to represent the second to last-equality:
$$
	\begin{tikzpicture}
	\draw (0, 1)--(0,-.5)--(3,-.5)--(3,0) (3.5,1)--(3.5,-.5)--(6.5,-.5)--(6.5,0);
	\draw (.5, 0)rectangle(3,1) (4,0)rectangle(6.5,1);
	\draw (1.75, .5)node{$P\cup K(P)$} (5.25, .5)node{$Q\cup K(Q)$};
	\draw (0,1)node[above]{$1$} (.5,1)node[above]{$\tilde 1$} (1.75,1)node[above]{$\cdots$} (3,1)node[above]{$k$} (3.5,1)node[above]{$\tilde k$} (4.2,1)node[above]{$k+1$} (5.4,1)node[above]{$\cdots$} (6.5,1)node[above]{$\tilde n$};
	\end{tikzpicture}~.
$$
The partition on $\{1, \ldots, n\}$ is $(|\underline * K(P)) * Q$, and the partition on $\{\tilde 1, \ldots, \tilde n\}$ is $P * (| \underline * K(Q)) = (P * |) \underline * K(Q)$. For the last equality, one has 
$$
	K\big((| \underline * K(P)) * Q\big) = K\big((| \underline * K(P))\big) \underline * K(Q),
$$ 
thanks to Lemma \ref{lem:Kreweras_change_*}, so we only need to see that $K(|\underline * K(P)) = P*|$. We represent here $(|\underline * K(P)) \cup (P*|)$, which is noncrossing:
$$
	\begin{tikzpicture}
	\draw (0, 1)--(0,-.5)--(3,-.5)--(3,0) (.5, 0)rectangle(3,1) (3.5,1)--(3.5,-.5);
	\draw (1.75, .5)node{$P\cup K(P)$};
	\draw (0,1)node[above]{$1$} (.5,1)node[above]{$\tilde 1$} (1.75,1)node[above]{$\cdots$} (3,1)node[above]{$k$} (3.5,1)node[above]{$\tilde k$};
	\end{tikzpicture}~.
$$
We see that any larger partition than $P*|$ on the right side of $\cup$ would create a crossing, meaning that $K(|\underline * K(P)) = P*|$, as claimed.
\end{proof}

\begin{rmk}
\label{rmk:not}
We note, however, that it is \textit{not} true that for $\tau\in Y$, $\varphi(R(\tau)) = \varphi(\tau) \cup K(\varphi(\tau))$. 
The following example shows that with the help of $\tau = \tA$.
\end{rmk}

\begin{ex}
We exemplify the claim made in Remark \ref{rmk:not} by showing that $\varphi(R(\tA))= \pB \cup K(\pB)$, even though $\varphi(\tA) = | \ |$. Indeed, observe that
$$
	R(\tO)
	=R(| / \tO \backslash |) 
	=(| / \tO \backslash R(|)) / \tO \backslash R(|)
	= \trecb{R(|)}{R(|)}=\tA
$$
and 
$$
	\varphi(\tA) = | \ |.	 
$$
\begin{align*}
	R(\tA)
	=R(\tO / \tO \backslash |) 
	=(| / \tO \backslash R(\tO)) / \tO \backslash R(|)
	= \trecb{R(\tO)}{R(|)}
\end{align*}
and 
$$
	\varphi(\trecb{R(\tO)}{R(|)}) 
	= \varphi(\tO \backslash R(\tO)) * (| \ \underline{*}\ \varphi(|))
	=\big(\emptyset  * (|\ \underline{*}\  \varphi(R(\tO)))\big) * \big(| \ \underline{*}\ \emptyset\big)
	=(|\  \underline{*}\  | \ | ) * \pO 	 
	=\pH,
$$
which is indeed the partition $\pB \cup K(\pB)$, as $K(\pB)=| \ |$.
\end{ex}

The following definition proposes several operator-valued boxed-convolution operations, which are meant to generalize both the classical boxed-convolution and the reduced boxed-convolution defined in the scalar-valued case. See Section 18 of the standard reference \cite{nica_speicher_book} by A.~Nica and R.~Speicher. The motivation for the definition of these convolution products on $\Mult [[B]]$ is given in Section $\ref{sec:meaning_free_proba}$ below.

\begin{defn}
\label{def:modboxedconv}
Let $f,g\in \Mult[[B]]$. Define, for $n\ge 1$, 
\allowdisplaybreaks
\begin{align}
	(f\boxconv g)_n(x_1, \ldots, x_n) 
	&:= \sum_{\tau\in Y_n} (f\cup g)_{R(\tau)}(x_1, 1, x_2, 1, \ldots, 1, x_n, 1) \label{convol1}\\ 
	(f\boxconvline g)_n(x_1, \ldots, x_n) 
	&:= \sum_{\tau\in Y_n} (f\cup g)_{R(\tau)}(1, x_1, 1, x_2, \ldots, 1, x_n) \label{convol2}\\ 
	(f\boxconvred g)_n(x_1, \ldots, x_n) 
	&:= \sum_{\tau\in Y_ {n-1}} (g\cup f)_{\smtunder{R(\tau)}}(x_1, 1, x_2, 1, \ldots, 1, x_n) \label{convol3}\\ 
	(f\boxconvredred g)_n(x_1, \ldots, x_n) 
	&:= \sum_{\tau\in Y_n} (f\cup g)_{\smtunder{R(\tau)}}(1, x_1, 1, x_2, 1, \ldots, 1, x_n, 1) . \label{convol4}
\end{align}
By requiring additionally $(f\boxconv g)_0 = g_0,~ (f\boxconvline g)_0 = g_0,~ (f\boxconvred g)_0 = 0$ and $(f\boxconvredred g)_0 = g_1(1)$, we have defined four series $f\boxconv g,~ f\boxconvline g,~ f\boxconvred g,~ f\boxconvredred g \in \Mult[[B]]$.
\end{defn}

Let us first give some insight into the values of these sums.

\begin{ex}
For any $f,g\in \Mult[[B]]$, the first terms in the series $f\boxconv g$ are

\begin{align*}
	(f\boxconv g)_0 &= g_0, \\
	(f\boxconv g)_1(x_1) &= g_1(f_1(x_1)), \\ 
	(f\boxconv g)_2(x_1, x_2) &= g_1(f_2(x_1, g_1(1)x_2)) + g_2(f_1(x_1),f_1(x_2)).
\end{align*}
The first terms in the series $f\boxconvline g$ are
\begin{align*}
	(f\boxconvline g)_0 &= g_0, \\
	(f\boxconvline g)_1(x_1) &= g_1(f_1(1)x_1), \\ 
	(f\boxconvline g)_2(x_1, x_2) &= g_1(f_2(1, g_1(x_1))x_2) + g_2(f_1(1)x_1,f_1(1)x_2).
\end{align*}
The first terms in the series $f\boxconvred g$ are
\begin{align*}
	(f\boxconvred g)_0 &= 0, \\
	(f\boxconvred g)_1(x_1) &= f_1(x_1), \\ 
	(f\boxconvred g)_2(x_1, x_2) &= f_2(x_1, g_1(1)x_2).
\end{align*}
The first terms in the series $f\boxconvredred g$ are
\begin{align*}
	(f\boxconvredred g)_0 &= g_1(1), \\
	(f\boxconvredred g)_1(x_1) &= g_2(1, f_1(x_1)), \\ 
	(f\boxconvredred g)_2(x_1, x_2) &= g_2(1, f_2(x_1, g_1(1)x_2)) + g_3(1, f_1(x_1), f_1(x_2)).
\end{align*}
\end{ex}

\begin{rmk}
\begin{enumerate}
	\item Note that equation \eqref{convol4} still holds for $n=0$.
	\item If $f,g\in G_B^\dif$, then $f\boxconv g\in G_B^\dif$ and $f\boxconvred g \in G_B^\dif$.
	\item If moreover $f,g\in G_B^I$, then also $f_1(1)\in B^\times$ and $g_1(1)\in B^\times$, so $(f\boxconvline g)_1 \in \GL(B)$ and $(f\boxconvredred g)_0 \in B^\times$, and therefore $f\boxconvline g\in G_B^\dif$ and $f\boxconvredred g \in G_B^\inv$.
\end{enumerate}
\end{rmk}

\begin{lem}
\label{lem:2_boxconv_in_G^I}
Let $f,g\in G_B^I$. Then $f\boxconv g\in G_B^I$ and $f\boxconvred g \in G_B^I$.
\end{lem}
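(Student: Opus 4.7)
The approach rests on the alternative characterization \eqref{eq:GBI_alternative_def} of $G_B^I$: an element of $G_B^\dif$ belongs to $G_B^I$ exactly when its first coefficient at the unit is invertible in $B$ and the left pull-out identity $h_n(x_1, \ldots, x_n) = x_1 h_n(1, x_2, \ldots, x_n)$ holds for every $n \geq 1$. Since the remark preceding the lemma already supplies $f \boxconv g, f \boxconvred g \in G_B^\dif$, it only remains to verify these two conditions. The first is immediate from the explicit formulas computed in the preceding example: using $f_1(x) = x f_1(1)$ and $g_1(x) = x g_1(1)$, one finds $(f \boxconv g)_1(x_1) = g_1(f_1(x_1)) = x_1 f_1(1) g_1(1)$, so $(f \boxconv g)_1(1) = f_1(1) g_1(1) \in B^\times$; analogously $(f \boxconvred g)_1(1) = f_1(1) \in B^\times$.

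The heart of the proof is the pull-out identity, which I would derive from the following auxiliary claim, proved by strong induction on $|\tau|$: \emph{if $f, g \in G_B^I$, then for every $\tau \in \overline{Y}$ and every $y_1, \ldots, y_{|\tau|} \in B$,}
\[
	(f \cup g)_\tau(y_1, y_2, \ldots, y_{|\tau|}) = y_1 \cdot (f \cup g)_\tau(1, y_2, \ldots, y_{|\tau|}),
\]
\emph{and the same identity holds with the roles of $f$ and $g$ exchanged.} The base case $\tau = \tO$ reduces to $g_1(y_1) = y_1 g_1(1)$, which is the defining identity of $G_B^I$ applied to $g$. For the inductive step, write $\tau = \toverrightcomb{\tau_1}{\tau_2}{\tau_k}$; by Definition \ref{def:treenesting}(2), $(f \cup g)_\tau(y_1, \ldots, y_n) = g_k(A_1, \ldots, A_k)$ with leftmost argument $A_1 = (g \cup f)_{\tau_1}(y_1, \ldots, y_{j_1-1}) \cdot y_{j_1}$, understood as $A_1 = y_1$ when $\tau_1 = |$. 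When $\tau_1 \neq |$, the induction hypothesis applied to $(g \cup f)_{\tau_1}$ gives $A_1 = y_1 \cdot B_1$ with $B_1$ independent of $y_1$; in either case the pull-out identity for $g_k$ (from $g \in G_B^I$) allows $y_1$ to be extracted on the left, yielding the claim. The symmetric statement for $(g \cup f)_\tau$ follows by interchanging $f$ and $g$ throughout.

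Given the claim, the pull-out property for $f \boxconv g$ and $f \boxconvred g$ follows term by term from Definition \ref{def:modboxedconv}: in each summand $(f \cup g)_{R(\tau)}(x_1, 1, x_2, \ldots, 1, x_n, 1)$ or $(g \cup f)_{\tover{R(\tau)}}(x_1, 1, x_2, \ldots, 1, x_n)$, the variable $x_1$ occupies the first slot of a nonempty tree ($|R(\tau)| = 2n$ for $\tau \in Y_n$, and $|\tover{R(\tau)}| = 2n-1$ for $\tau \in Y_{n-1}$), so the claim extracts $x_1$ from every summand and hence from the whole sum. I expect the main obstacle to be keeping track of the alternation between $f$ and $g$ baked into the notation $(f \cup g)_\tau$; the clean way to handle this is to prove the claim simultaneously for $(f \cup g)_\tau$ and $(g \cup f)_\tau$, so that the induction always has the right outer function available regardless of the parity of the depth in the recursion.
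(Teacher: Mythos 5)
Your proof is correct and takes essentially the same route as the paper's: it reduces to the characterization \eqref{eq:GBI_alternative_def} and establishes the left pull-out identity $(f\cup g)_\tau(y_1,\ldots,y_{|\tau|}) = y_1\,(f\cup g)_\tau(1,y_2,\ldots,y_{|\tau|})$ by induction on $\tau$, proved simultaneously for both orderings of $f$ and $g$, and then applies it term by term to the defining sums. The paper merely asserts this tree induction without detail, so your write-up simply fills it in; the only (harmless) slip is in the final paragraph, where the tree appearing in \eqref{convol3} is the root with $R(\tau)$ grafted as its \emph{right} child, i.e.\ $\tunder{R(\tau)}$ rather than $\tover{R(\tau)}$, which does not affect the argument since your auxiliary claim applies to every nonempty tree.
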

\begin{proof}
We already saw that $f\boxconv g\in G_B^\dif$ and $f\boxconvred g\in G_B^\dif$. Moreover, from the definition of $(f\cup g)_\tau$ we can show by induction on $n\ge 1$ that for all $\tau \in Y_n$, for all $x_1, \ldots, x_n \in B$, we have $(f\cup g)_{\tau}(x_1, \ldots, x_n) = x_1(f\cup g)_{\tau}(1, x_2, \ldots, x_n)$. Therefore
\begin{align*}
	(f\boxconv g)_n(x_1, \ldots x_n) &= \sum_{\tau\in Y_n} (f\cup g)_{R(\tau)}(x_1, 1, x_2, 1, \ldots, 1, x_n, 1)\\ 
	&= \sum_{\tau\in Y_n} x_1(f\cup g)_{R(\tau)}(1, 1, x_2, 1, \ldots, 1, x_n, 1) \\ 
	&= x_1 (f\boxconv g)_n(1, x_2, \ldots x_n).
\end{align*}
Thus $f\boxconv g\in G_B^I$ thanks to Equation \eqref{eq:GBI_alternative_def}, and similarly $f\boxconvred g \in G_B^I$.
\end{proof}

\smallskip

\begin{lem}
\label{lem:4_eq_op_valued_boxconv_compo}
For $f,g\in\mathrm{Mult}[[B]]$, the following holds
\begin{align}
	(f\boxconv g) 
	&= g \circ (f\boxconvred g) \label{eq_conv_compo}\\ 
	(g\boxconvline f) 
	&= f \circ ((f\boxconvredred g)I) \label{eq_convline_compo}.
\end{align}
\end{lem}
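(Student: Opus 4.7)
My plan is to expand both sides of each identity as a sum indexed by planar binary trees and then match term-by-term using the right-comb decomposition provided by Lemma \ref{lem:cat_pair_other_decomp}. Recall that any $\tau\in Y_n$ with $n\ge 1$ admits a unique decomposition $\tau = \toverrightcomb{\tau_1}{\tau_2}{\tau_m}$ with $\tau_i\in Y_{k_i-1}$, $k_i\ge 1$, $k_1+\cdots+k_m = n$. I will first establish the combinatorial compatibility
\[
R(\tau) \;=\; \toverrightcomb{\tunder{R(\tau_1)}}{\tunder{R(\tau_2)}}{\tunder{R(\tau_m)}},
\]
which follows from the defining recursion $R(\tovunder{\sigma}{\tau})=\trecb{R(\sigma)}{R(\tau)}$ together with the identification $\tunder{\rho}=\tovunder{|}{\rho}$, by induction on $m$. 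Each $\tunder{R(\tau_i)}$ has $2k_i-1$ internal vertices.

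Next I apply the recursive definition of $(f\cup g)_{R(\tau)}$ to this right-comb form: the root contributes an outer $g_m$ (resp.\ $f_m$ after swapping $f\leftrightarrow g$), and with $K_i:=k_1+\cdots+k_i$, the spine vertices of $R(\tau)$ sit at left-to-right positions $2K_i$, so the $i$-th argument of the outer function reads
\[
(g\cup f)_{\tunder{R(\tau_i)}}(y_{2K_{i-1}+1},\ldots,y_{2K_i-1})\cdot y_{2K_i}.
\]
For \eqref{eq_conv_compo}, substituting the input pattern $(x_1,1,x_2,1,\ldots,x_n,1)$ gives $y_{2K_i}=1$, while the arguments of the inner factor become $(x_{K_{i-1}+1},1,x_{K_{i-1}+2},1,\ldots,1,x_{K_i})$. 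Summing over $\tau_i\in Y_{k_i-1}$ I recognize this as precisely $(f\boxconvred g)_{k_i}(x_{K_{i-1}+1},\ldots,x_{K_i})$ by definition \eqref{convol3}. Finally, summing over the decomposition data $(m,k_1,\ldots,k_m,\tau_1,\ldots,\tau_m)$ and comparing with \eqref{compo} yields the right-hand side $g\circ(f\boxconvred g)$.

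For \eqref{eq_convline_compo}, the input pattern $(1,x_1,1,x_2,\ldots,1,x_n)$ exchanges the roles of odd and even positions: now $y_{2K_i}=x_{K_i}$ and the arguments of the inner factor read $(1,x_{K_{i-1}+1},1,\ldots,1,x_{K_i-1},1)$. Summing over $\tau_i$ produces $(f\boxconvredred g)_{k_i-1}(x_{K_{i-1}+1},\ldots,x_{K_i-1})$ by \eqref{convol4}, and multiplying by $y_{2K_i}=x_{K_i}$ rebuilds $((f\boxconvredred g)\cdot I)_{k_i}(x_{K_{i-1}+1},\ldots,x_{K_i})$ via the product \eqref{multipl} applied to $I=(\delta_{n,1}\Id)_{n\ge 0}$. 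Assembling gives $f\circ((f\boxconvredred g)I)$, with the $n=0$ case handled separately by the boundary conventions in Definition \ref{def:modboxedconv}. The only real obstacle is the index bookkeeping in the alternating input patterns, but once the identity $j_i=2K_i$ for the spine positions is in hand, both equations fall out by direct substitution.
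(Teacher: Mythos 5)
Your proposal is correct and follows essentially the same route as the paper: both proofs rest on the unique right-comb (spine) decomposition of a tree, the compatibility of $R$ with that decomposition (the paper's equation \eqref{eq:R_right_comb_RC}, which you re-derive as $R(\tau)=\toverrightcomb{\tunder{R(\tau_1)}}{\tunder{R(\tau_2)}}{\tunder{R(\tau_m)}}$), and direct substitution of the alternating input patterns, with the sum over the inner trees $\tau_i$ reassembling into $f\boxconvred g$, respectively $(f\boxconvredred g)I$. Your explicit index bookkeeping $j_i=2K_i$ and the worked-out second identity only flesh out what the paper leaves as "done similarly".
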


\begin{proof}
At the core of these two equalities is the same following fact: for every tree $\tau \in Y$, there exists unique $j\ge 1$, $\sigma_1, \ldots, \sigma_j \in Y$ such that $\tau = \smtoverrightcomb{\sigma_1}{\sigma_j}$. Now, since $R : Y \to Y^b$ is a bijection, we deduce that: for every tree $R(\tau) \in Y^b$, there exists unique $j\ge 1$, $R(\sigma_1), \ldots, R(\sigma_j) \in Y^b$ such that $R(\tau) = R\Big(\smtoverrightcomb{\sigma_1~}{\sigma_j~}\!\Big) =\text{RC}(R(\sigma_1), \ldots, R(\sigma_j))$. The last equality comes from \eqref{eq:R_right_comb_RC}.

Let us derive \eqref{eq_conv_compo}, the other one being done similarly. Suppose $n\ge 1$. Then 
\allowdisplaybreaks
\begin{align*}
	(f\boxconv g)_n(x_1, \ldots, x_n) 
	&= \sum_{\tau\in Y_n} (f\cup g)_{R(\tau)}(x_1, 1, x_2, 1, \ldots, 1, x_n, 1) \\ 
	&{\mkern-80mu} = \sum_{\substack{n=k_1 + \cdots + k_j \\ k_i\ge 1}} 
	\sum_{\sigma_i\in Y_{k_i-1}} (f\cup g)_{\text{RC}(R(\sigma_1), \ldots, R(\sigma_j))}(x_1, 1, \ldots, 1, x_n, 1) \\ 
	&{\mkern-80mu} = \sum_{\substack{n=k_1 + \cdots + k_j \\ k_i\ge 1}} 
	\sum_{\sigma_i\in Y_{k_i-1}} g_j\Big( (g\cup f)_\smtunder{R(\sigma_1)}(x_1, 1, \ldots, 1, x_{k_1}) \cdot 1, 
	~\ldots, ~(g\cup f)_\smtunder{R(\sigma_j)}(x_{n-k_j+1}, 1, \ldots, 1, x_n)\cdot 1\Big) \\ 
	&{\mkern-80mu} = \sum_{\substack{n=k_1 + \cdots + k_j \\ 
	k_i\ge 1}} g_j\Big( (f\boxconvred g)_{k_1}(x_1, \ldots, x_{k_1}), ~\ldots, ~(f\boxconvred g)_{k_j}(x_{n-k_j+1}, \ldots, x_n)\Big) \\ 
	&{\mkern-80mu}= \Big(g \circ (f\boxconvred g)\Big)_n(x_1, \ldots, x_n).
\end{align*}
Note that in the third equality, each $(g\cup f)_\smtunder{R(\sigma_i)}$ takes an odd number of arguments, since $\smtunder{R(\sigma_i)}$ always have an odd number of vertices. Finally, we check that $(f\boxconv g)_0 = g_0 = \Big(g \circ (f\boxconvred g)\Big)_0$.
\end{proof}

For multilinear series in $I\cdot \Mult[[B]]$, the same boxed convolutions, $f\boxconv g$ and $f\boxconvline g$, have another factorisation:

\begin{lem}
\label{lem:4_eq_op_valued_boxconv_mult}
For $f,g\in I\cdot\mathrm{Mult}[[B]]$,
\begin{align}
	(f\boxconv g) 
	&= (f \boxconvred g)\cdot (f \boxconvredred g) \label{eq_conv_mult}\\ 
	(g\boxconvline f) 
	&= (f \boxconvredred g)\cdot(f \boxconvred g) \label{eq_convline_mult}.
\end{align}
Note that it is not necessarily true if we only assume $f,g \in \mathrm{Mult}[[B]]$
\end{lem}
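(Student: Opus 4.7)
The plan is to deduce both identities by combining the Catalan pair decomposition $\tau = \tovunder{\sigma_1}{\sigma_2}$ of $Y_n$ (Lemma~\ref{lem:Cat_pair_Y}) with the first-argument factoring property of elements in $I\cdot\Mult[[B]]$: if $h = IH \in I\cdot\Mult[[B]]$, then
\[
	h_k(y_1, y_2, \ldots, y_k) = y_1 \cdot h_k(1, y_2, \ldots, y_k),
\]
which follows directly from the definition of multiplication in $\Mult[[B]]$ and was already used implicitly in the proof of Lemma~\ref{lem:2_boxconv_in_G^I}. The base cases $n=0$ are immediate: both sides of each identity vanish because $f_0 = g_0 = 0$ and $(f\boxconvred g)_0 = 0$.

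For \eqref{eq_conv_mult}, one would partition the defining sum as
\[
	(f\boxconv g)_n(x_1,\ldots,x_n) = \sum_{k=1}^n \sum_{\substack{\sigma_1 \in Y_{k-1}\\ \sigma_2 \in Y_{n-k}}} (f\cup g)_{R(\tovunder{\sigma_1}{\sigma_2})}(x_1, 1, \ldots, 1, x_n, 1),
\]
and invoke $R(\tovunder{\sigma_1}{\sigma_2}) = \trecb{R(\sigma_1)}{R(\sigma_2)}$ from Definition~\ref{def:R_and_Y^b}. Expanding $(f\cup g)$ along the right-comb decomposition of this tree places an outer $g_{m+1}$ at the root (where $m$ is the length of the right spine of $R(\sigma_2)$), whose first slot equals $(g\cup f)_{\smtunder{R(\sigma_1)}}(x_1, 1, \ldots, 1, x_k) \cdot 1$, the pivot $1$ at position $2k$ factoring out trivially. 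Applying the factoring property to $g_{m+1}$ pulls this first slot out as a left factor, and a direct comparison of the remaining right-comb slots identifies the surviving factor $g_{m+1}(1,\ldots)$ as $(f\cup g)_{\smtunder{R(\sigma_2)}}(1, x_{k+1}, 1, \ldots, 1, x_n, 1)$. Summing over $\sigma_1 \in Y_{k-1}$ and $\sigma_2 \in Y_{n-k}$ at each $k$ produces $(f\boxconvred g)_k(x_1,\ldots,x_k) \cdot (f\boxconvredred g)_{n-k}(x_{k+1},\ldots,x_n)$, as desired.

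The proof of \eqref{eq_convline_mult} proceeds symmetrically: decompose $\tau = \tovunder{\sigma_1}{\sigma_2}$ with $\sigma_1 \in Y_l$, $\sigma_2 \in Y_{n-l-1}$ and note that $(g\boxconvline f)_n$ involves $(g\cup f)_{R(\tau)}$, so the outer function at the root is now $f_{m+1}$; the factoring property of $f \in I\cdot\Mult[[B]]$ again splits each summand, but this time the pivot $x_{l+1}$ (an $x$ at an even position in the argument sequence $(1, x_1, \ldots, 1, x_n)$) naturally attaches to the second factor, yielding $(f\boxconvredred g)_l(x_1,\ldots,x_l) \cdot (g\cup f)_{\smtunder{R(\sigma_2)}}(x_{l+1}, 1, \ldots, 1, x_n)$ per summand.

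The main obstacle is the careful bookkeeping of the left-to-right vertex ordering in $R(\tau) = \trecb{R(\sigma_1)}{R(\sigma_2)}$ and the matching of its argument positions against the alternating patterns expected by $(f\boxconvred g)$ and $(f\boxconvredred g)$. The necessity of the hypothesis $f, g \in I\cdot\Mult[[B]]$ is already transparent at $n=1$: $(f\boxconv g)_1(x_1) = g_1(f_1(x_1))$ equals $f_1(x_1) \cdot g_1(1)$ precisely because the factoring identity forces $g_1(y) = y \cdot g_1(1)$; without this property the two sides of \eqref{eq_conv_mult} differ in general.
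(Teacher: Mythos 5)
Your proposal is correct and follows essentially the same route as the paper's proof: decompose each $\tau \in Y_n$ at its root via the Catalan pair structure, use $R(\tovunder{\sigma_1}{\sigma_2}) = \trecb{R(\sigma_1)}{R(\sigma_2)}$, and apply the first-argument factoring property of $I\cdot\Mult[[B]]$ at the root to split each summand into the two reduced convolution factors. The only cosmetic difference is that the paper isolates this splitting step as a standalone identity for general trees $\tovunder{\tau}{\sigma}$ before specializing, whereas you carry it out inline on the trees $\trecb{R(\sigma_1)}{R(\sigma_2)}$ directly.
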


\begin{proof}
As $f,g\in I\cdot \Mult[[B]]$, the following fact is true: If $\sigma, \tau \in Y$,
\begin{equation}
\label{eq:Imult_additionnal_eq}
	(f\cup g)_{\smtovunder\tau\sigma}(x,\ldots, x) = (g\cup f)_\tau(x, \ldots, x)(f\cup g)_{\smtunder\sigma}(x,\ldots, x).
\end{equation}
Indeed, let us write $\tunder\sigma = \toverrightcomb{}{\sigma_2}{\sigma_k}$, so that $\tovunder\tau\sigma = \toverrightcomb{\tau}{\sigma_2}{\sigma_k}$, thus
\allowdisplaybreaks
\begin{align*}
	(f\cup g)_{\smtovunder\tau\sigma}(x,\ldots, x) &= g_k((g\cup f)_{\tau}(x, \ldots, x)x, (g\cup f)_{\sigma_2}(x, \ldots, x)x, 
	\ldots, (g\cup f)_{\sigma_k}(x, \ldots, x)x) \\
	&= (g\cup f)_{\tau}(x, \ldots, x) g_k(x, (g\cup f)_{\sigma_2}(x, \ldots, x)x, \ldots, (g\cup f)_{\sigma_k}(x, \ldots, x)x) \\
	&= (g\cup f)_\tau(x, \ldots, x)(f\cup g)_{\smtunder\sigma}(x,\ldots, x).
\end{align*}
Note that we have used that $g_k(bx, x, \ldots, x) = bx g_k(1,x, \ldots, x) = b g_{k}(x, x, \ldots, x)$ for $b\in B$ in the second equality, which holds since $g\in I\cdot \Mult[[B]].$

Now, the derivation of the two equalities from the statement in the lemma is a similar computation as in the proof of Lemma \ref{lem:4_eq_op_valued_boxconv_compo}. This time, the core fact is that for $R(\tau)\in Y^b$, there exists unique $R(\sigma), R(\rho) \in Y^b$ such that $R(\tau) = \trecb{R(\sigma)}{\hspace{1pt}R(\rho)}$.

Let us derive \eqref{eq_conv_mult}, the other one is done similarly. Suppose that $n\ge 1$. Then
\allowdisplaybreaks
\begin{align*}
	(f\boxconv g)_n(x_1, \ldots, x_n) 
	&= \sum_{\tau\in Y_n} (f\cup g)_{R(\tau)}(x_1, 1, x_2, 1, \ldots, 1, x_n, 1) \\ 
	&= \sum_{\substack{k+l=n-1 \\ k, l\ge 0}} \sum_{\substack{\sigma\in Y_k \\ 
		\rho\in Y_l}} (f\cup g)_{\smtrecb{\underset{~}{R(\sigma)}}{R(\rho)}}(x_1, 1, x_2, 1, \ldots, 1, x_n, 1) \qquad \text{ with } \tau = \tovunder \sigma\rho\\ 
	&\overset{\eqref{eq:Imult_additionnal_eq}}{=} \sum_{\substack{k+l=n-1 \\ k, l\ge 0}} \sum_{\substack{\sigma\in Y_k \\ 
	\rho\in Y_l}} (g\cup f)_{\smtunder{R(\sigma)}}(x_1, 1, x_2, 1, \ldots, 1, x_{k+1}) (f\cup g)_{\smtunder{R(\rho)}}(1, x_{k+2}, 1, \ldots, 1, x_n, 1) \\
	&= \sum_{\substack{k+l=n-1 \\ k, l\ge 0}} (f\boxconvred g)_{k+1}(x_1, x_2, \ldots, x_{k+1}) 
	(f\boxconvredred g)_l(x_{k+2}, \ldots, x_n) \\ 
	&= \Big((f\boxconvred g) (f\boxconvredred g)\Big)_n(x_1, \ldots, x_n).
\end{align*}
Finally, we check that $(f\boxconv g)_0 = 0 = \Big((f \boxconvred g)\cdot (f \boxconvredred g)\Big)_0$, since $(f \boxconvred g)_0 = 0$.
\end{proof}

We consider now the properties of the S-transform (Definition \ref{def:S_transform}) with respect to the first convolution product in Definition \ref{def:modboxedconv}. 

\begin{thm}
\label{thm:S_transf_boxconv}
Let $f,g \in G_B^I$. Then the S-transform satisfies the following identity with respect to the boxed convolution product \eqref{convol1}
$$
	S_{f\boxconv g} = S_g \cdot (S_f \circ (S_g^{-1}IS_g)).
$$
\end{thm}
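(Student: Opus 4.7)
The plan is to reduce the identity to a key relation $S_{f\boxconvred g} = S_f \circ (GI)$ (with $g = IG$), and then verify this by showing that $S_{f\boxconvred g}$ and $S_f \circ (GI)$ satisfy the same fixed-point equation, to which uniqueness applies. Throughout, set $m := f\boxconvred g$, $p := f\boxconvredred g$, and write $f = IF$, $g = IG$.

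The first step uses equation \eqref{eq_conv_compo} of Lemma \ref{lem:4_eq_op_valued_boxconv_compo}, namely $f\boxconv g = g \circ m$, with both sides in $G_B^I$ by Lemma \ref{lem:2_boxconv_in_G^I}. Taking compositional inverses and applying right-distributivity (Lemma \ref{lem:rightaction}) gives
\[
IS_{f\boxconv g} = IS_m \circ IS_g = (I \circ IS_g)(S_m \circ IS_g) = I \cdot \bigl(S_g \cdot (S_m \circ IS_g)\bigr),
\]
whence $S_{f\boxconv g} = S_g\cdot(S_m \circ IS_g)$. Combined with Corollary \ref{cor:other_expr_S_f-1IS_f}, which rewrites $S_g^{-1}IS_g = (GI)\circ IS_g$, and associativity of $\circ$, the theorem becomes equivalent to the identity $S_m = S_f\circ (GI)$.

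To establish this identity, I combine all four equations of Lemmas \ref{lem:4_eq_op_valued_boxconv_compo} and \ref{lem:4_eq_op_valued_boxconv_mult}. Applying \eqref{eq_convline_compo} and \eqref{eq_convline_mult} to $g\boxconvline f$ gives $pm = f\circ (pI) = (pI)(F\circ pI)$ (using $f=IF$ and right-distributivity), and since $p\in G_B^\inv$ is multiplicatively invertible, cancelling $p$ on the left yields $m = I(F\circ pI)$; equivalently, $m = IT$ with $T = F\circ pI$. Meanwhile, comparing $(IG)\circ m = m(G\circ m)$ from \eqref{eq_conv_compo} with $f\boxconv g = mp$ from \eqref{eq_conv_mult}, and cancelling $m$ on the left (injective because $m_1(x) = xF_0$ with $F_0\in B^\times$), gives $p = G\circ m$. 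Consequently $p\circ IS_m = G\circ m\circ IS_m = G\circ I = G$, and Lemma \ref{lem:fixed_point_S_f} rewrites as $S_m\cdot (F\circ (GI\cdot S_m)) = 1$.

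To conclude, I show that $Y := S_f\circ (GI)$ satisfies the very same equation. Composing $S_f(F\circ IS_f) = 1$ on the right by $GI$ (well-defined since $(GI)_0 = 0$), right-distributivity together with $IS_f\circ GI = (I\circ GI)(S_f\circ GI) = GI\cdot Y$ give $Y\cdot (F\circ(GI\cdot Y)) = 1$. A standard induction on $n$ shows this equation has a unique solution: at order $n$, isolating the term $Y_nF_0$ in $(Y(F\circ(GI\cdot Y)))_n = \delta_{n,0}$ (with $F_0\in B^\times$) and noting that $(GI\cdot Y)_j$ depends only on $Y_0,\ldots,Y_{j-1}$ (because $(GI)_0=0$) determines $Y_n$ from $Y_0,\ldots,Y_{n-1}$. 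Hence $S_m = Y$. The main obstacle is precisely the simultaneous use of all four relations of Lemmas \ref{lem:4_eq_op_valued_boxconv_compo} and \ref{lem:4_eq_op_valued_boxconv_mult}: it is what is needed to extract both $m = I(F\circ pI)$ and $p = G\circ m$, and thereby to rewrite the abstract fixed-point equation for $S_m$ in a form matching the one satisfied by $S_f\circ (GI)$.
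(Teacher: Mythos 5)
Your proof is correct. Its first half coincides with the paper's argument: from $f\boxconv g = g\circ(f\boxconvred g)$ you obtain $S_{f\boxconv g}=S_g\cdot\bigl(S_{f\boxconvred g}\circ(IS_g)\bigr)$ and, via Corollary \ref{cor:other_expr_S_f-1IS_f}, reduce the theorem to the key identity $S_{f\boxconvred g}=S_f\circ(GI)$; you also extract $p=G\circ m$ (in your notation $m=f\boxconvred g$, $p=f\boxconvredred g$) exactly as the paper does for its $H_2=G\circ(IH_1)$, by playing the compositional factorisation of $f\boxconv g$ against the multiplicative one. Where you genuinely diverge is in how the key identity is then verified. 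The paper substitutes $H_2=G\circ(IH_1)$ into $f\circ(H_2I)$ and composes with $IS_f$ on the left and $IS_{h_1}$ on the right, reading off $S_f\circ(GI)=S_{h_1}$ after cancelling $GI$; it never invokes the fixed-point lemma here. You instead derive the closed form $m=I\cdot(F\circ(pI))$ from the two factorisations of $g\boxconvline f$, feed it into Lemma \ref{lem:fixed_point_S_f} to get $S_m\cdot(F\circ(GI\cdot S_m))=1$, check that $S_f\circ(GI)$ satisfies the same equation, and conclude by an order-by-order uniqueness argument which you justify correctly. Both routes consume all four identities of Lemmas \ref{lem:4_eq_op_valued_boxconv_compo} and \ref{lem:4_eq_op_valued_boxconv_mult}; the paper's is a shorter chain of direct manipulations, while yours isolates an explicit functional equation characterising $S_{f\boxconvred g}$, making the mechanism more transparent and echoing the proof of Lemma \ref{lem:fixed_point_S_f} itself. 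Your two left-cancellations (of $p$, which is invertible in $G_B^\inv$, and of $m$) are both legitimate; the latter can be justified either by your order-by-order argument using $F_0\in B^\times$ or by the absence of zero divisors for $\cdot$ noted in the paper, which is also what underlies the paper's analogous cancellation of $IH_1$.
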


\begin{proof}
Suppose that $f=IF$ and $g=IG$, for  $F ,G\in G_B^\inv$.
Let us write the following products 
$$
	h := f\boxconv g, 
	\quad 
	h_1 := f\boxconvred g, 	
	\quad 
	\overline h := f\boxconvline g, 
	\quad 
	H_2 := f\boxconvredred g. 
$$
We already noticed that the products $f\boxconv g$ and $f\boxconvred g$ are in $G_B^I$ (Lemma \ref{lem:2_boxconv_in_G^I}). Hence, we can write $h = IH$ and $h_1 = IH_1$. Lemma \ref{lem:4_eq_op_valued_boxconv_compo} then tells us that:
\begin{align}
	h 
	&= g\circ h_1 \label{eq:h_compo} \\
	\overline h 
	&= f \circ (H_2I). \label{eq:hb_compo}
\end{align}
As $f, g\in I\cdot \Mult[[B]]$, $h$ and $\overline h$ also factorize by Lemma \ref{lem:4_eq_op_valued_boxconv_mult}:
\begin{align}
	h 
	&= h_1H_2 \label{eq:h_mult} \\
	\overline h 
	&= H_2 h_1. \label{eq:hb_mult}
\end{align}

First observe that inverting \eqref{eq:h_compo} yields 
$$
	IS_h = (IS_{h_1}) \circ (IS_g) = IS_g \cdot S_{h_1}\circ(IS_g)
$$ 
and thus
$$
	S_h = S_g \cdot S_{h_1}\circ(IS_g). \quad\quad (\star)
$$
Then, the same equality \eqref{eq:h_compo} can be written as 
$$
	IH = (IG)\circ (IH_1) = IH_1 \cdot (G\circ (IH_1)). 
$$
But $IH=IH_1H_2$ by \eqref{eq:h_mult} so
$$
	H_2 = G \circ(IH_1).
$$
Therefore, by \eqref{eq:hb_mult}, $\overline h = H_2IH_1 = (G \circ(IH_1)) \cdot I H_1$ so
$$
	\overline h = (GI)\circ (IH_1).
$$
Now \eqref{eq:hb_compo} can be turned into $(GI)\circ (IH_1) = f \circ \big(G \circ(IH_1) \cdot I\big)$, and then, composing on the left by $IS_f$, 
$$
	(IS_f) \circ(GI)\circ (IH_1) = G \circ(IH_1) \cdot I.
$$
Composing on the right by $(IS_{h_1})$ gives
$$
	(IS_f) \circ(GI) = G I S_{h_1}.
$$
But the left-hand side equals $GI\cdot (S_f \circ(GI))$, so
$$
	S_f \circ(GI) = S_{h_1}.
$$
Going back to $(\star)$, we can now say that
$$
	S_h = S_g \cdot S_f \circ(GI)\circ(IS_g).
$$
And we get the desired equality by Corollary \ref{cor:other_expr_S_f-1IS_f}.

\end{proof}

Note the double use of \eqref{eq:h_compo}, which may indicate that there should be a more direct proof, using only one time each equality.


\section{Operator-valued free probability}
\label{sec:meaning_free_proba}


\subsection{The twisted factorisation of the S-transform}
\label{ssec:twisted_fact_S_transf}

Suppose $B$ is a unital algebra over the field $\mathbb{K}$ of zero characteristic. We recall some definitions about operator-valued free probability. See \cite{MingoSpeicher2017} for more details and background.

\begin{defn}\cite{MingoSpeicher2017}
\label{def:op_valued_free_proba1}
\begin{enumerate}
	\item A \textit{unital $B$-algebra} is a unital $K$-algebra $\mathcal A$, that is a bi-module over $B$, and such that for all $a_1,a_2\in \mathcal A, b_1, b_2\in B$, 
	\begin{align*}
		(b_1 \cdot a_1) \cdot b_2 &= b_1 \cdot (a_1 \cdot b_2)\\
		b_1 \cdot (a_1 \cdot a_2) &= (b_1 \cdot a_1) \cdot a_2 \\
		(a_1 \cdot a_2) \cdot b_1 &= a_1 \cdot (a_2 \cdot b_1) \\
		(a_1\cdot b_1)\cdot a_2 &= a_1 \cdot (b_1 \cdot a_2),
	\end{align*}
	where all products and actions are denoted by $\cdot$. In other words, we ask that any product $x_1x_2x_3 = x_1\cdot x_2\cdot x_3$ is uniquely defined, for $x_1, x_2, x_3\in B \cup \mathcal A$. 

	\item A \textit{$B$-valued probability} space is a pair $(\mathcal A, \bb E)$, where $\mathcal A$ is a unital $B$-algebra, and $\bb E : \mathcal A \to B$ is a unital linear map, $\bb E(1) = 1$, such that for all $x, y\in B$ and for all $a \in \mathcal A$, 
		
	\begin{equation}
	\label{eq:Bbimod}
		\bb E(xay) = x \bb E(a) y.
	\end{equation}
	We say that $\bb E$ is $B$-balanced.

	Elements $a \in \mathcal A$ are called \textit{(non-commutative) random variables}. 
	Their distribution is usually given by their \textit{moment series} $M^a\in \Mult[[B]]$ defined by
	$$
		M^a(x_1, \ldots, x_n) = \bb E(a x_1 a x_2 a\cdots a x_n a).
	$$
	\item (Freeness)
	A collection of $B$-subalgebras $(\mathcal A_i)_{i\in I}$ of $\mathcal A$ is said to be \textit{free} if $\bb E(a_1 \cdots a_n) = 0$ when the following conditions holds:
	\begin{itemize}
		\item $n\ge 1$,
		\item $\bb E(a_i) = 0$ for $1\le i\le n$,
		\item $a_i \in \mathcal A_{j_i}$ for $1\le i\le n$,
		\item $j_i \neq j_{i+1}$ for $1\le i< n$.
	\end{itemize}
	A collection of elements $(a_i)_{i\in I} \subseteq \mathcal A$ is said to be free if their respective generated $B$-subalgebras are free. In this paper, for simplicity we shall focus on the case of two free elements $a,b \in \mathcal A$.
\end{enumerate}
\end{defn}

\begin{rmk}
	In the definition of a unital $B$-algebra, note that the first rule among the four is included in the notion of $B$-bi-module axioms, but not the three others. We emphasize that $B$ is not assumed to be a subset of $\mathcal A$. However, if $B$ is a sub-algebra of $\mathcal A$, then $\mathcal A$ is automatically a $B$-algebra.
\end{rmk}

Next we define operator-valued cumulants in terms of operator-valued moment-cumulant relations using tree-indexed series of multilinear functions (Definition \ref{def:treenesting}). 

\begin{defn}
\label{def:op_valued_free_proba2}
\begin{enumerate}
\item
	The series of operator-valued cumulants $\kappa=(\kappa_n)_{n\ge 0}$ with $\kappa_n:\mathcal A^{\otimes n}\to B$ is defined by requiring that for all $n\ge 0$, $a_1, \ldots, a_n\in \mathcal A$,
	\begin{equation}
	\label{eq:def_of_cumulants}
		\bb E(a_1 \cdots a_n) = \sum_{\tau\in Y_n} \kappa_\tau (a_1, \ldots, a_n).
	\end{equation}
	
\item		
	If $a\in \mathcal A$, define its corresponding cumulant series $K^a\in \Mult[[B]]$ by
	$$
		K^a_n(x_1, \ldots, x_n) := \kappa(a, x_1 a , \ldots, x_n a).
	$$
	We also define $k^a := I K^a \in G_B^I$:
	$$
		k^a_n(x_1, \ldots, x_n) := x_1\kappa(a, x_2 a , \ldots, x_n a)
							= \kappa(x_1 a, x_2 a , \ldots, x_n a).
	$$
	Finally, when $\bb E(a) \in B^\times$, we define the S-transform of the operator-valued random variable $a$ by
	$$
		S_a = S_{k^a},
	$$
	which is well-defined since $\kappa(a) = \bb E(a) \in B^\times$ implying $K^a\in G_B^\inv$ and $k^a\in G_B^I$.
\end{enumerate}
\end{defn}

\begin{rmk}
\label{rmk:cumulants}
\begin{enumerate}
\item
	Note that in the definition of cumulants \eqref{eq:def_of_cumulants}, the notation of $\kappa_\tau$ is based on Remark \ref{rmk:def_of_f_tau} and Definition \ref{def:treenesting}. As $\kappa_{\smtrightcomb n} = \kappa_n$ by definition, equality \eqref{eq:def_of_cumulants} can be rewritten as
	\[
		\kappa_n(a_1, \ldots, a_n) = \bb E(a_1 \cdots a_n) - \sum_{\substack{\tau\in Y_n \\ \tau \neq \smtrightcomb n}} \kappa_\tau(a_1, \ldots, a_n).
	\]
	Since $\kappa_\tau$ is defined from the maps $\kappa_k, k<n$ for $\tau\neq \trightcomb n$, this shows inductively that $\kappa$ is well-defined.
\item
	Note that $\kappa$ inherits the $B$-balancedness \eqref{eq:Bbimod} from $\bb E$. More precisely, for $a_1, \ldots, a_n\in \mathcal A$ and $x_0, \ldots, x_n \in B$, we have
	\begin{equation}
		\label{eq:kappa_balanced}
		\kappa(x_0a_1x_1, a_2x_2, \ldots, a_nx_n) = x_0\kappa(a_1, x_1a_2, \ldots, x_{n-1}a_n) x_n.
	\end{equation}
\end{enumerate}
\end{rmk}

\medskip

In the next sections, we fix a $B$-valued probability space $(\mathcal A, \bb E)$. The following theorem is the operator-valued version of the well-known statement that "freeness $\iff$ mixed cumulants vanish".

\begin{thm}{\rm{\cite[Thm.~9.4]{speicher_notes_op_valued}}}
\label{thm:freeness_mixed_cum_vanish}
Elements $a,b\in \mathcal A$ are free if and only if for all $n\ge 1$, $c_1, \ldots, c_n \in \{a, b\}$ such that the $c_i$ are not all equal, and for all $x_1, \ldots, x_{n-1} \in B$, we have 
$$
	\kappa_n (c_1, x_1c_2,  \ldots, x_{n-2}c_{n-1}, x_{n-1}c_n) = 0.
$$
\end{thm}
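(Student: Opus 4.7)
I would prove both directions by strong induction on $n$, using the moment-cumulant formula \eqref{eq:def_of_cumulants}, the bijection $\varphi:Y\to\NCP$ of Theorem \ref{def:varphi}, and the $B$-balancedness \eqref{eq:kappa_balanced}. The base case $n=2$ follows from the two-tree expansion
\[
\bb E(c_1 x_1 c_2)=\kappa_2(c_1,x_1 c_2)+\bb E(c_1)\,x_1\,\bb E(c_2),
\]
combined with the centering trick $c_i\mapsto c_i-\bb E(c_i)\cdot 1_{\mathcal A}$ and the $n=2$ instance of freeness, which forces $\bb E(c_1 x_1 c_2)=\bb E(c_1)x_1\bb E(c_2)$ whenever $c_1\neq c_2$.

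For the forward direction, I would first reduce a general mixed tuple $(c_1,\ldots,c_n)$ to the alternating centered sub-case by $B$-multilinearity of $\kappa_n$ together with the standard vanishing property of operator-valued cumulants of length $\ge 2$ on scalar arguments: replacing each $c_i$ by $(c_i-\bb E(c_i)\cdot 1_{\mathcal A})+\bb E(c_i)\cdot 1_{\mathcal A}$ and expanding multilinearly, every scalar contribution is absorbed via \eqref{eq:kappa_balanced} into neighbouring $B$-coefficients, producing strictly shorter mixed cumulants killed by the induction hypothesis. Once reduced to $(c_1,\ldots,c_n)$ with $c_i\neq c_{i+1}$ and $\bb E(c_i)=0$, freeness forces $\bb E(c_1 x_1 c_2\cdots x_{n-1} c_n)=0$; equation \eqref{eq:def_of_cumulants} together with the observation from Remark \ref{rmk:cumulants} that the right-comb tree contributes exactly $\kappa_n$ then yields
\[
\kappa_n(c_1,x_1 c_2,\ldots,x_{n-1}c_n)=-\sum_{\tau\in Y_n\setminus\{\smtrightcomb n\}}\kappa_\tau(c_1,x_1 c_2,\ldots,x_{n-1}c_n).
\]
Transporting each $\tau$ via $\varphi$ to a noncrossing partition $P\in\NCP_n$ with at least two blocks, the term $\kappa_\tau$ decomposes as a nested product of factors $\kappa_{|V|}$ indexed by the blocks $V\in P$; if some $V$ is colour-mixed its factor vanishes by the induction hypothesis, and otherwise $P$ is monochromatic, in which case the elementary combinatorial fact that every monochromatic noncrossing partition of a strictly alternating $\{a,b\}$-sequence contains a singleton block produces a factor $\kappa_1(c_i)=\bb E(c_i)=0$. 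Thus every term on the right vanishes and $\kappa_n=0$.

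The reverse direction uses the same combinatorial dichotomy on the defining equation of freeness. Given $a_1,\ldots,a_n\in\mathcal A$ centered with $a_i\in\mathcal A_{j_i}$ and $j_i\neq j_{i+1}$, I would expand each $a_i$ as a $B$-linear combination of monomials in the corresponding generator, apply \eqref{eq:def_of_cumulants}, and observe that every tree term either carries a mixed sub-cumulant (vanishing by hypothesis, extended via $B$-linearity and \eqref{eq:kappa_balanced}) or, being monochromatic, a singleton $\kappa_1(a_i)=\bb E(a_i)=0$. Summing gives $\bb E(a_1\cdots a_n)=0$, as required.

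\textbf{Main obstacle.} The heart of the argument is the combinatorial dichotomy for $P=\varphi(\tau)$ — mixed block versus monochromatic with an unavoidable singleton — which drives both directions simultaneously; the singleton lemma itself is a short induction on the outermost block containing the index $1$. The main technical delicacy is the forward-direction reduction from a general mixed cumulant to the alternating centered shape, which requires careful orchestration of $B$-multilinearity, \eqref{eq:kappa_balanced}, and the vanishing of $\kappa_n$ on $B$-valued arguments for $n\ge 2$.
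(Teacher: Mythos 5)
The paper itself does not prove this theorem: it is imported verbatim from Speicher's lecture notes \cite{speicher_notes_op_valued}, so your proposal can only be measured against the standard argument. Your skeleton --- induction on $n$, inversion of the moment-cumulant relation \eqref{eq:def_of_cumulants}, centering via the vanishing of $\kappa_n$ ($n\ge 2$) on scalar entries, and the dichotomy ``some block is colour-mixed'' versus ``a monochromatic noncrossing partition of an alternating word contains a singleton'' --- is indeed the backbone of that proof, and the singleton lemma you isolate (via interval blocks) is the right combinatorial heart of the alternating centered case.

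There are, however, two genuine gaps. First, in the forward direction your reduction to the alternating case does not work: writing $c_i=(c_i-\bb E(c_i)1_{\mathcal A})+\bb E(c_i)1_{\mathcal A}$ removes scalar parts but leaves the colour pattern untouched, so a cumulant such as $\kappa_3(a,x_1a,x_2b)$ is still non-alternating after centering, and for such words neither $\bb E(c_1x_1c_2\cdots x_{n-1}c_n)=0$ nor the term-by-term vanishing of the $\kappa_\tau$ holds (for the centered word $a,a,b,b$ the partition $\{\{1,2\},\{3,4\}\}$ is monochromatic with no singleton and its contribution is nonzero; it cancels against a piece of the moment, which your dichotomy cannot see). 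Handling consecutive repeats requires an extra ingredient, in the standard proof the formula for cumulants with products as entries, $\kappa_{n-1}(a_1,\dots,a_ia_{i+1},\dots,a_n)=\sum_{\pi\vee\sigma=1_n}\kappa_\pi(a_1,\dots,a_n)$, applied to a pair of equal-coloured neighbours together with the observation that every nontrivial $\pi$ in that sum contains a mixed block; this device is absent from your plan. Second, in the reverse direction, after expanding each centered $a_i\in\mathcal A_{j_i}$ into monomials in its generator, a singleton block of the resulting long word contributes $\kappa_1(a)=\bb E(a)$ or $\kappa_1(b)=\bb E(b)$, which are \emph{not} zero in general --- only the $a_i$ are centered, not the generators --- and the letter sequence of the long word is not alternating, only the group sequence is. Already for $n=2$ the identity $\bb E\big((a-\bb E(a))(b-\bb E(b))\big)=0$ follows from $\kappa_2(a,b)=0$ only after a cancellation between $\kappa_1(a)\kappa_1(b)$ and the cross terms of the centering, not term by term as you assert. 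The clean route is to apply the moment-cumulant formula to the $a_i$ themselves and first upgrade the hypothesis from vanishing of mixed cumulants of the generators to vanishing of mixed cumulants of arbitrary elements of the generated $B$-subalgebras --- which again rests on the product formula. Until that formula (or equivalent bookkeeping) is supplied, neither direction closes.
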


We are now interested in products $ab\in \mathcal A$, with fixed free elements $a,b\in \mathcal A$. Let us first give a formula for the cumulants of the $\mathcal A$-product $ab$ as a function of the cumulants of $a$ and of $b$. This formula motivates the definition of one of the boxed convolution operation given in Definition \ref{def:modboxedconv}.

\begin{prop}
\label{prop:formula_cum_product_free_vars}
If $a, b\in \mathcal A$ are free, then $k^{ab} = k^a \boxconv k^b$. Explicitly, 
$$
	k^{ab}_n(x_1, \ldots, x_n) = \sum_{\tau\in Y_n} (k^a\cup k^b)_{R(\tau)}(x_1, 1, x_2, 1, \ldots, 1, x_n, 1).
$$
\end{prop}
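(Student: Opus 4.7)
My plan is to verify the equality via the uniqueness of cumulants. By Remark \ref{rmk:cumulants}(1), the cumulant sequence $k^{ab}$ is the unique multilinear series satisfying the recursive moment--cumulant relation
\[
\bb E(x_1 ab \cdots x_n ab) = \sum_{\tau \in Y_n} k^{ab}_\tau(x_1, \ldots, x_n),
\]
so setting $\widetilde k := k^a \boxconv k^b$, it suffices to prove the same identity with $k^{ab}$ replaced by $\widetilde k$.

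I would compute the left-hand side by rewriting $x_1 ab \cdots x_n ab$ as a product of $2n$ alternating atoms $x_1 a, b, x_2 a, b, \ldots, x_n a, b$ and invoking the moment--cumulant relation of Definition \ref{def:op_valued_free_proba2} to get $\sum_{\sigma \in Y_{2n}} \kappa_\sigma(x_1 a, b, x_2 a, b, \ldots, x_n a, b)$. By Theorem \ref{thm:freeness_mixed_cum_vanish}, only those $\sigma$ whose internal vertices each receive monochromatic arguments (all $a$-atoms or all $b$-atoms) contribute. Under the bijection $\varphi: Y \to \NCP$, these $\sigma$ correspond exactly to noncrossing partitions of the interlaced set $\{1, \tilde 1, \ldots, n, \tilde n\}$ of the form $P \cup Q$ with $P \in \NCP_n$ on the $a$-positions, $Q \in \NCP_n$ on the $b$-positions, and $Q \le K(P)$. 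On the right-hand side, expanding $\widetilde k_\tau$ via Definition \ref{def:treenesting} with the boxed-convolution formula \eqref{convol1} inserted at each vertex yields a nested sum: an outer sum over $\tau \in Y_n$ and, for each vertex $v \in V(\tau)$, an inner sum over sub-trees $\rho_v \in Y$. Using the tree-substitution identity of Lemma \ref{lem:compo_multilinear_series_trees} (in the operadic form of Remark \ref{emk:operad}), this nested sum collapses into a single sum over planar binary trees with $2n$ internal vertices of a prescribed combinatorial shape.

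The decisive combinatorial step is then to identify the $2n$-vertex trees arising from the right-hand side with the surviving $\sigma$ from the left-hand side, i.e., with trees whose $\varphi$-image is a monochromatic noncrossing partition $P \cup Q$ satisfying $Q \le K(P)$. Proposition \ref{prop:Krewerstree} guides this identification by pinning down the extremal case $Y^b_{2n} = R(Y_n)$, corresponding to $Q = K(P)$; the nested sub-tree substitutions from the boxed convolution should then produce the finer refinements $Q \le K(P)$ needed to recover the full sum. The main obstacle will be organizing this bijection rigorously while keeping track of the $B$-module structure encoded by the interlaced arguments $(x_1, 1, x_2, 1, \ldots, x_n, 1)$ appearing in $(k^a \cup k^b)_{R(\tau)}$, so that the monochromatic cumulant evaluations agree on both sides term by term.
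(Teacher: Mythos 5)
Your plan follows the paper's proof essentially step for step: expand $\bb E(x_1 ab\cdots x_n ab)$ into $2n$-fold cumulants of the alternating arguments, discard mixed terms by Theorem \ref{thm:freeness_mixed_cum_vanish} so that only the trees in $Y^{be}_{2n}$ (equivalently, partitions $P\cup Q$ with $Q\le K(P)$) survive, rewrite those via $(k^a\cup k^b)_\sigma$, and match the result against the nested expansion of $k^a\boxconv k^b$ through the decomposition $Y^{be}_{2n}=\bigsqcup_{\tau\in Y_n}\Pi(\tau)$ of Lemmas \ref{lem:Y^bp_disjoint_union} and \ref{lem:extract_sum_Y^bp_convol}. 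The only cosmetic differences are that you verify the moment--cumulant recursion for the candidate $k^a\boxconv k^b$ and invoke uniqueness rather than extracting the termwise identity from the equality of the two sums, and that the substitution step actually requires the two-colored Lemma \ref{lem:compo_multilinear_series_trees_bis} rather than Lemma \ref{lem:compo_multilinear_series_trees}.
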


\begin{proof}
We devote Section \ref{ssec:formula_cum_prod_free_vars_trees} to the proof of this statement. The formula is also stated in \cite{T-transf}, using a different formalism.
\end{proof}

Theorem \ref{thm:S_transf_boxconv} then implies immediately the twisted factorisation of the S-transform:

\begin{thm}
\label{thm:S_transf_mult_free_proba}
If $a,b\in \mathcal A$ are free and $\bb E(a)$ and $\bb E(b)$ are invertible in $B$, then
\begin{equation}
\label{eq:twiestedmultipl}
	S_{ab} = S_b \cdot S_a \circ (S_b^{-1}IS_b).
\end{equation}
\end{thm}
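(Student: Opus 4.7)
The plan is to combine the two main results of the preceding subsections in an essentially formal way: Proposition~\ref{prop:formula_cum_product_free_vars} expresses the cumulant series of a product of free variables as a boxed convolution, and Theorem~\ref{thm:S_transf_boxconv} computes the S-transform of a boxed convolution. Chaining these two statements should immediately yield \eqref{eq:twiestedmultipl}.

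First I would verify that all the hypotheses are in place. The assumption $\bb E(a), \bb E(b) \in B^\times$ is exactly the condition $\kappa_1(a), \kappa_1(b) \in B^\times$, hence $K^a, K^b \in G_B^{\mathrm{inv}}$ and $k^a = IK^a,\ k^b = IK^b \in G_B^I$, so that $S_a = S_{k^a}$ and $S_b = S_{k^b}$ are defined in the sense of Definition~\ref{def:S_transform}. Furthermore, Lemma~\ref{lem:2_boxconv_in_G^I} ensures $k^a \boxconv k^b \in G_B^I$, so after applying Proposition~\ref{prop:formula_cum_product_free_vars} one automatically gets $k^{ab} \in G_B^I$, making $S_{ab}$ well-defined.

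Then I would apply Proposition~\ref{prop:formula_cum_product_free_vars} to obtain $k^{ab} = k^a \boxconv k^b$, and feed this into Theorem~\ref{thm:S_transf_boxconv} with $f = k^a$ and $g = k^b$, which gives
$$S_{ab} = S_{k^a \boxconv k^b} = S_{k^b} \cdot \bigl(S_{k^a} \circ (S_{k^b}^{-1} I S_{k^b})\bigr) = S_b \cdot \bigl(S_a \circ (S_b^{-1} I S_b)\bigr).$$
The \emph{main obstacle} has in fact already been overcome upstream: the combinatorial content (i.e., the tree/noncrossing-partition bijection $\varphi$ together with Proposition~\ref{prop:Krewerstree} translating freeness plus the Kreweras complement into the $\boxconv$ structure) is what makes Proposition~\ref{prop:formula_cum_product_free_vars} true, and the algebraic manipulation of the factorizations in Lemmas~\ref{lem:4_eq_op_valued_boxconv_compo} and~\ref{lem:4_eq_op_valued_boxconv_mult} is what makes Theorem~\ref{thm:S_transf_boxconv} true. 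At this stage, the only thing left is to match the invertibility hypothesis on the expectations with the requirement $f, g \in G_B^I$, which is immediate.
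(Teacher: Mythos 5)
Your proposal is correct and is exactly the paper's argument: the theorem is stated there as an immediate consequence of chaining Proposition~\ref{prop:formula_cum_product_free_vars} ($k^{ab}=k^a\boxconv k^b$) with Theorem~\ref{thm:S_transf_boxconv}, and your verification that the invertibility of $\bb E(a)$ and $\bb E(b)$ places $k^a,k^b$ in $G_B^I$ matches the well-definedness remark in Definition~\ref{def:op_valued_free_proba2}.
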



\subsection{The U-transform and special cases of the twisted factorisation}
\label{ssec:special_cases_U_transf}

Since in the scalar-valued case, the extra composition by $S_b^{-1}IS_b$ does not appear in \eqref{eq:twiestedmultipl}, a natural question to ask is when this term vanishes. First note that it has some properties itself.

\begin{defn}
For $f=IF\in G_B^I$, define its U-transform to be 
\begin{equation}
\label{eq:Utransform}
	U_f := S_f^{-1}IS_f.
\end{equation}
For an element $a\in \mathcal A$, we define also its U-transform in terms of its S-transform: $U_a := U_{k^a} = S_a^{-1}IS_a$.
\end{defn}

Observe that Theorem \mbox{\ref{thm:S_transf_boxconv}\strut} and Theorem \mbox{\ref{thm:S_transf_mult_free_proba}\strut} can be rewritten as 
$$
	S_{f\boxconv g} = S_g\cdot S_f \circ U_g
$$ 
respectively  
$$
	S_{ab} = S_b \cdot S_a \circ U_b.
$$

\noindent It is interesting to note that the U-transform of an element in $\mathcal A$ can be defined directly in terms of its moment series.

\begin{lem}
\label{lem:other_expr_U_transf}
For $a\in \mathcal A$, its U-transform has the following expressions:
\begin{align*}
	U_a = S_a^{-1}IS_a 
		&= (K^aI)\circ (IK^a)^{\circ-1} \\
		&= (M^aI)\circ (IM^a)^{\circ-1}.
\end{align*}
\end{lem}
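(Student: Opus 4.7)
Both identities have the form $(FI)\circ(IF)^{\circ -1}$ for an appropriate $F$, so the plan is to invoke Corollary~\ref{cor:other_expr_S_f-1IS_f} twice. Applied to $f = k^a = IK^a$ (so $F = K^a$), it gives $S_a^{-1}\cdot I\cdot S_a = (K^aI)\circ (IK^a)^{\circ -1}$, which is the first equality for free. Applied to $f = IM^a$ (which lies in $G_B^I$ since $M^a_0 = \mathbb E(a)\in B^\times$), it gives $(M^aI)\circ (IM^a)^{\circ -1} = S_{IM^a}^{-1}\cdot I\cdot S_{IM^a}$. It therefore remains to show $S_{IM^a}^{-1}\cdot I\cdot S_{IM^a} = S_a^{-1}\cdot I\cdot S_a$, i.e.\ to relate the two S-transforms $S_{IM^a}$ and $S_a$.

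The bridge will be the moment-cumulant relation, phrased as a composition identity of series. I will first show by induction on $|\tau|$, using the $B$-balancedness \eqref{eq:kappa_balanced}, that $\kappa_\tau(x_1 a,\ldots,x_n a) = k^a_\tau(x_1,\ldots,x_n)$ for every $\tau \in Y$; summing over $\tau \in Y_n$ and applying the definition of cumulants \eqref{eq:def_of_cumulants} then gives $(IM^a)_n(x_1,\ldots,x_n) = \mathbb E(x_1 a\cdots x_n a) = \sum_{\tau\in Y_n} k^a_\tau(x_1,\ldots,x_n)$. Via the right-comb decomposition of binary trees (Lemma~\ref{lem:cat_pair_other_decomp}), this sum is recognizable as the expansion of a composition, yielding the series identity
\[
	(IK^a)\circ \bigl((1 + IM^a)\,I\bigr) = IM^a.
\]

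Composing both sides on the right by $(IM^a)^{\circ -1}$ and using $(IK^a)^{\circ -1} = IS_a$ turns this into $\bigl((1+IM^a)\,I\bigr)\circ (IM^a)^{\circ -1} = IS_a$. Expanding the left-hand side with the right-distributivity of composition over multiplication (Lemma~\ref{lem:rightaction}), the additivity of composition in the outer variable, and the elementary identities $(IM^a)\circ(IM^a)^{\circ -1}=I$ and $I\circ g=g$, it reduces to $(1+I)\cdot (IM^a)^{\circ -1} = (1+I)\,I\,S_{IM^a}$. Since $I$ commutes under $\cdot$ with itself and with the unit $1$, it commutes with $1+I$, so $(1+I)\,I = I\,(1+I)$; the injectivity of left-multiplication by $I$ in $\mathrm{Mult}[[B]]$ then lets me cancel the leftmost $I$ in $I\,(1+I)\,S_{IM^a} = I\,S_a$, yielding $S_a = (1+I)\,S_{IM^a}$, equivalently $S_{IM^a} = (1+I)^{-1}\,S_a$ (well-defined as $(1+I)_0 = 1 \in B^\times$).

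Substituting this into $S_{IM^a}^{-1}\cdot I\cdot S_{IM^a}$ produces $S_a^{-1}\,(1+I)\,I\,(1+I)^{-1}\,S_a$, which collapses to $S_a^{-1}\cdot I\cdot S_a$ by the same commutation $(1+I)\,I = I\,(1+I)$, completing the second equality. I expect the main obstacle to lie in the passage from the tree sum $\sum_\tau k^a_\tau$ to the composition $(IK^a)\circ ((1+IM^a)\,I)$: one has to track carefully that the trailing factor $x_{j_i}$ after each $k^a_{\tau_i}$ in the right-comb expansion of a tree is exactly the effect of postmultiplying by $I$, and that the constant term $1$ in $1+IM^a$ is what lets a spine vertex have an empty left subtree ($\tau_i = |$), so that the composition formula correctly indexes all of $Y_n$ rather than only trees whose spine subtrees are non-empty.
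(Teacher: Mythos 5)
Your proof is correct, and for the second equality it takes a genuinely different route from the paper. The paper handles $S_a^{-1}IS_a = (K^aI)\circ (IK^a)^{\circ-1}$ exactly as you do (Corollary \ref{cor:other_expr_S_f-1IS_f}), but for $(M^aI)\circ (IM^a)^{\circ-1}$ it simply cites from \cite{speicher_short_proof, dykema2007strans} the pair of twisted moment--cumulant relations $M^aI = (K^aI)\circ(I+IM^aI)$ and $IM^a = (IK^a)\circ(I+IM^aI)$, after which the claim is a one-line cancellation of $(I+IM^aI)\circ(I+IM^aI)^{\circ-1}$. You instead derive the single relation $(IK^a)\circ\bigl((1+IM^a)I\bigr) = IM^a$ (note $(1+IM^a)I = I+IM^aI$, so it is the same identity) from the paper's own tree-indexed definition of cumulants, via $\kappa_\tau(x_1a,\ldots,x_na)=k^a_\tau(x_1,\ldots,x_n)$ and the right-comb decomposition of Lemma \ref{lem:cat_pair_other_decomp}; you then avoid the second (left-handed) relation entirely by applying Corollary \ref{cor:other_expr_S_f-1IS_f} to $IM^a$ and showing $S_{IM^a}=(1+I)^{-1}S_a$, so that conjugation by $S_{IM^a}$ and by $S_a$ agree on $I$ because $(1+I)I=I(1+I)$. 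Your approach costs more algebra but buys self-containedness: since the paper defines $\kappa$ by the tree formula \eqref{eq:def_of_cumulants} rather than the standard partition formula, invoking the literature's moment--cumulant relations implicitly relies on the equivalence of the two definitions, whereas your derivation establishes the needed identity directly within the paper's framework. All the individual steps check out: the injectivity of left multiplication by $I$ holds (evaluate $(Ig)_n$ at $x_1=1$ to recover $g_{n-1}$), $1+I$ is invertible for $\cdot$ since $(1+I)_0=1$, and the compositional inverses you use exist precisely under the standing hypothesis $\bb E(a)\in B^\times$ needed for $S_a$ to be defined.
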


\begin{proof}
The equality $S_a^{-1}IS_a = (K^aI)\circ (IK^a)^{\circ-1}$ follows immediately from Corollary \ref{cor:other_expr_S_f-1IS_f}. It remains to show that $(K^aI)\circ (IK^a)^{\circ-1} = (M^aI)\circ (IM^a)^{\circ-1}$. According to \cite{speicher_short_proof} (see also \cite{dykema2007strans}), we have the following equalities:
\begin{align*}
	M^a 
	&= K^a\circ (I + I M^a I) \cdot (1+IM^a) \\
	&= (1+M^aI) \cdot K^a\circ (I + I M^a I) 
\end{align*}
which rewrite as 
\begin{align*}
	M^a I 
	&= (K^a I)\circ (I + I M^a I) \\
	I M^a 
	&= (IK^a)\circ (I + I M^a I) 
\end{align*}
hence the result.
\end{proof}

We can now provide an answer to the question of when the factorisation simplifies:

\begin{prop}
Suppose $a,b\in \mathcal A$ are free and both $\bb E(a)$ and $\bb E(b)$ are invertible in $B$. If $K^a$ is constant or $M^b I = I M^b$, then $S_{ab} = S_b \cdot S_a$.
\end{prop}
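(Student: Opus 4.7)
My plan is to start from the twisted factorisation of Theorem \ref{thm:S_transf_mult_free_proba}, namely $S_{ab} = S_b \cdot S_a \circ U_b$ with $U_b := S_b^{-1} I S_b$, and to show that under either hypothesis the composition $S_a \circ U_b$ collapses to $S_a$. This reduces the problem to two independent (and short) verifications, one for each assumption.

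For the hypothesis $M^b I = I M^b$, the argument is essentially immediate from Lemma \ref{lem:other_expr_U_transf}. That lemma gives
\[
    U_b = (M^b I) \circ (I M^b)^{\circ -1}.
\]
Under the assumption $M^b I = I M^b$, the left factor equals the series being inverted, so $U_b = (I M^b) \circ (I M^b)^{\circ -1} = I$. Then $S_a \circ U_b = S_a \circ I = S_a$, and the factorisation simplifies to $S_{ab} = S_b \cdot S_a$.

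For the hypothesis that $K^a$ is constant, I would first unpack the definitions: saying $K^a$ is constant means $K^a_n = 0$ for $n \ge 1$, so $k^a = I K^a$ has $k^a_0 = 0$, $k^a_1(x) = x\, \mathbb{E}(a)$, and $k^a_n = 0$ for $n \ge 2$. A direct computation of $(k^a)^{\circ -1}$ from the composition formula \eqref{compo} (only the term $l=1$ survives when composing on the left with $k^a$) gives $(k^a)^{\circ -1}(x) = x\, \mathbb{E}(a)^{-1}$ in degree one and zero in higher degrees, from which $S_a$ has only the degree-zero part $S_{a,0} = \mathbb{E}(a)^{-1}$. Then for any $h \in \mathrm{Mult}[[B]]$ with $h_0 = 0$ (in particular $h = U_b$, since $(U_b)_0 = 0$), the composition formula \eqref{compo} yields $(S_a \circ h)_n = \sum_{l \ge 0,\ k_1 + \cdots + k_l = n} S_{a,l}(h_{k_1}, \ldots, h_{k_l})$; only the term $l = 0$ (hence $n = 0$) contributes, giving $S_a \circ h = S_a$. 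Again we conclude $S_{ab} = S_b \cdot S_a$.

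There is no real obstacle: both cases amount to showing that the twisting term $U_b$ is "absorbed" by $S_a$, either because $U_b$ itself equals $I$ (case 2) or because $S_a$ is constant and therefore insensitive to precomposition with any series vanishing in degree zero (case 1). The only point requiring minor care is verifying $(U_b)_0 = 0$, which follows from the multiplicative definition of $U_b = S_b^{-1} I S_b$ since the middle factor $I$ has zero degree-zero part.
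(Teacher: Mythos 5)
Your proof is correct and follows essentially the same route as the paper: apply Theorem \ref{thm:S_transf_mult_free_proba} and show $S_a \circ U_b = S_a$ in each case, using Lemma \ref{lem:other_expr_U_transf} to get $U_b = I$ when $M^bI = IM^b$, and the constancy of $S_a$ when $K^a$ is constant. You merely spell out the details (that $S_a$ is constant when $K^a$ is, and that a constant series absorbs precomposition) which the paper asserts without computation.
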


\begin{proof}
If $K^a$ is constant, so is $S_a$. Therefore $S_a \circ U_b = S_a$. If $M^b I = I M^b$, then by Lemma \ref{lem:other_expr_U_transf}, $U_b = (I M^b)\circ (M^b I)^{\circ-1} = I$, so $S_a\circ U_b = S_a$. In both cases, according to Theorem \ref{thm:S_transf_mult_free_proba}, $S_{ab} = S_b\cdot S_a \circ U_b = S_b\cdot S_a$.
\end{proof}

\begin{rmk}
We have not been able to identify a counterexample for the following reverse implication: if $S_{ab} = S_b \cdot S_a$, then either $K^a$ is constant or $M^b I = I M^b$. It amounts to finding a solution for $f = f\circ g$ with $f\in \Mult[[B]]$ non-constant and $g \in G_B^\dif - \{I\}$, where $f$ plays the role of $S_a$, and $g$ the role of $(M^bI)\circ (IM^b)^{\circ-1}$.
\end{rmk}

We note that the U-transform has additional structure with respect to composition and the boxed convolution product defined in \eqref{convol1}:

\begin{lem}
Suppose $f,g \in G_B^I$. Then
$$
	U_{f\boxconv g} = U_f \circ U_g.
$$
Suppose $a,b\in \mathcal A$ free and both $\bb E(a)$ and $\bb E(b)$ are invertible in $B$. Then $U_{ab} = U_a \circ U_b.$
\end{lem}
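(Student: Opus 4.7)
The plan is a direct computation combining Theorem \ref{thm:S_transf_boxconv} with the right-distributivity of composition over multiplication (Lemma \ref{lem:rightaction}). The key observation is that $U_f = S_f^{-1} \cdot I \cdot S_f$ is a product of three factors, so composing $U_f$ on the right with anything distributes nicely. Before anything else, I would check that $U_g \in G_B^\dif$ so that the compositions $S_f \circ U_g$, $S_f^{-1} \circ U_g$, and $U_f \circ U_g$ make sense: the constant term of $U_g$ vanishes because $I_0 = 0$ and multiplication in $\Mult[[B]]$ has $(h \cdot k)_0 = h_0 k_0$; the linear term $(U_g)_1(x) = (S_g)_0^{-1}\, x\, (S_g)_0$ is invertible since $(S_g)_0 \in B^\times$.

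Setting $h := f\boxcon g$, Theorem \ref{thm:S_transf_boxconv} gives $S_h = S_g \cdot (S_f \circ U_g)$. Right-distributivity applied to $1 = (S_f \cdot S_f^{-1}) \circ U_g = (S_f \circ U_g) \cdot (S_f^{-1} \circ U_g)$ yields $(S_f \circ U_g)^{-1} = S_f^{-1} \circ U_g$, so
\[
    S_h^{-1} = (S_f^{-1} \circ U_g) \cdot S_g^{-1}.
\]
Therefore
\[
    U_h = S_h^{-1} \cdot I \cdot S_h = (S_f^{-1} \circ U_g) \cdot S_g^{-1} \cdot I \cdot S_g \cdot (S_f \circ U_g) = (S_f^{-1} \circ U_g) \cdot U_g \cdot (S_f \circ U_g),
\]
where in the last step I collapsed the middle three factors using the very definition $U_g = S_g^{-1} \cdot I \cdot S_g$.

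To recognize this as $U_f \circ U_g$, I apply right-distributivity once more, now to the triple product $U_f = S_f^{-1} \cdot I \cdot S_f$:
\[
    U_f \circ U_g = (S_f^{-1} \cdot I \cdot S_f) \circ U_g = (S_f^{-1} \circ U_g) \cdot (I \circ U_g) \cdot (S_f \circ U_g).
\]
Since $I$ is the identity for $\circ$, we have $I \circ U_g = U_g$, so the right-hand side coincides with the expression obtained above for $U_h$. This proves $U_{f\boxcon g} = U_f \circ U_g$.

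The second assertion is then immediate: by Proposition \ref{prop:formula_cum_product_free_vars}, $k^{ab} = k^a \boxcon k^b$, and both $k^a$ and $k^b$ lie in $G_B^I$ because $\bb E(a), \bb E(b)$ are invertible; applying the first part with $f = k^a$ and $g = k^b$ gives $U_{ab} = U_{k^{ab}} = U_{k^a} \circ U_{k^b} = U_a \circ U_b$. There is no real obstacle here -- the computation is just a double application of Lemma \ref{lem:rightaction} -- but the suggestive step is spotting that $U_f$, being a product of three series, can be pulled through composition on the right in one stroke, which is precisely what matches the expansion of $S_h^{-1} \cdot I \cdot S_h$.
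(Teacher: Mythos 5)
Your proof is correct and follows essentially the same route as the paper's: both expand $U_{f\boxcon g} = S_{f\boxcon g}^{-1}\cdot I\cdot S_{f\boxcon g}$ using Theorem \ref{thm:S_transf_boxconv}, collapse the middle factors to $U_g$, and reassemble via the right-distributivity of $\circ$ over $\cdot$ to recognize $(S_f^{-1}\cdot I\cdot S_f)\circ U_g$. Your version merely makes explicit two steps the paper leaves implicit (that $(S_f\circ U_g)^{-1}=S_f^{-1}\circ U_g$ and that $U_g\in G_B^{\dif}$ so the compositions are defined), which is a welcome addition rather than a deviation.
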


\begin{proof}
A simple computations shows that
\begin{align*}
	U_{f\boxconv g} &= S_{f\boxconv g}^{-1} \cdot I \cdot S_{f\boxconv g} \\ 
	&= (S_g \cdot S_f \circ U_g)^{-1} \cdot I \cdot S_g \cdot S_f \circ U_g \\ 
	&= S_f^{-1} \circ U_g\cdot S_g^{-1} \cdot I \cdot S_g \cdot S_f \circ U_g \\ 
	&= S_f^{-1} \circ U_g\cdot U_g \cdot S_f \circ U_g \\ 
	&= (S_f^{-1} \cdot I \cdot S_f) \circ U_g \\
	&= U_f \circ U_g.
\end{align*}
The second statement is a simple consequence of the first since $k^{ab} = k^a\boxconv k^b$. 
\end{proof}

\begin{rmk}
Note, however, that the relation $U_{ab} = U_a \circ U_b$ is not very informative. For example, if the algebra $B$ is commutative, then $U_a=I$ for all $a\in \mathcal A$.
\end{rmk}

One can define a left-version of the S-transform.

\begin{defn}
Let $f=IF\in G_B^I$. We define $S'_F$
$$
	S'_FI = (FI)^{\circ-1}.
$$
\end{defn}

\begin{lem}
We have the following relations
\begin{align*}
	S_F 
	&= S'_F \circ U_F \\ 
	U_F^{\circ-1} 
	&= S'_FIS_F^{\prime-1}.
\end{align*}
\end{lem}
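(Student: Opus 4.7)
The plan is to derive both identities using Corollary \ref{cor:other_expr_S_f-1IS_f}, which rewrites $U_F$ as $(FI)\circ (IF)^{\circ-1}$, together with the right-distributivity of composition over multiplication (Lemma \ref{lem:rightaction}) and the defining relation $S'_F\cdot I = (FI)^{\circ-1}$.

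For the first identity, I would start from $(S'_F I)\circ (FI) = I$ and compose on the right with $(IF)^{\circ-1}$; by the corollary this yields
\[
	(S'_F I)\circ U_F \;=\; (IF)^{\circ-1} \;=\; IS_F.
\]
Expanding the left-hand side by Lemma \ref{lem:rightaction} gives $(S'_F\circ U_F)\cdot (I\circ U_F) = (S'_F\circ U_F)\cdot U_F$, while the right-hand side equals $S_F\cdot U_F$ directly from $U_F = S_F^{-1} I S_F$. Equating the two,
\[
	\bigl(S'_F\circ U_F - S_F\bigr)\cdot U_F \;=\; 0,
\]
and since $U_F\neq 0$ (its linear part $x\mapsto F_0^{-1} x F_0$ is invertible) and $(\Mult[[B]],\cdot)$ has no zero divisors, cancellation on the right yields $S'_F\circ U_F = S_F$.

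For the second identity, with the first in hand, I would compute $(S'_F\cdot I\cdot S_F^{\prime-1})\circ U_F$ by two applications of Lemma \ref{lem:rightaction}:
\[
	\bigl(S'_F\cdot I\cdot S_F^{\prime-1}\bigr)\circ U_F \;=\; (S'_F\circ U_F)\cdot U_F\cdot (S_F^{\prime-1}\circ U_F).
\]
The first factor equals $S_F$ by the identity just proved; for the last factor, composing $S_F^{\prime-1}\cdot S'_F = 1$ with $U_F$ on the right and again using right-distributivity gives $(S_F^{\prime-1}\circ U_F)\cdot S_F = 1$, so $S_F^{\prime-1}\circ U_F = S_F^{-1}$. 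The product then collapses to $S_F\cdot U_F\cdot S_F^{-1} = I$ by the very definition $U_F = S_F^{-1} I S_F$. Hence $S'_F\cdot I\cdot S_F^{\prime-1}$ is a left composition-inverse of $U_F$, and because $U_F\in G_B^{\dif}$ and $(G_B^{\dif},\circ)$ is a group, it coincides with $U_F^{\circ-1}$.

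The only delicate point is the cancellation step used in the first identity: one must check that each of $S'_F I$, $FI$, $U_F$, and $IS_F$ sits in the appropriate subset ($G_B^{\inv}$ or $G_B^{\dif}$) so that the compositions and the zero-divisor argument are all valid, and that $U_{F,1}$ is indeed invertible so that $U_F\ne 0$ and one-sided inverses suffice. Once this bookkeeping is settled, the rest is purely formal algebraic manipulation.
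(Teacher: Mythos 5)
Your proof is correct, and it is precisely the kind of ``simple algebraic manipulation'' the paper leaves to the reader: both identities follow from the defining relation $S'_FI=(FI)^{\circ-1}$, Corollary~\ref{cor:other_expr_S_f-1IS_f}, the right-distributivity \eqref{rightaction}, and the cancellation step, which is legitimately justified by the paper's observation that $\cdot$ has no zero divisors together with $U_F\neq 0$ (equivalently, one notes $U_F\in G_B^\dif$ since its linear part is conjugation by the invertible element $F_0$). The only nitpick is that $(U_F)_1$ is $x\mapsto F_0xF_0^{-1}$ rather than $x\mapsto F_0^{-1}xF_0$, which of course does not affect the invertibility you need.
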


\begin{lem}
Suppose $a,b\in \mathcal A$ free, $\bb E(a)$ and $\bb E(b)$ invertible in $B$. Then
\begin{align*}
	S_{ab} 
	&= S_b \cdot S_a\circ U_b = (S'_b \cdot S_a) \circ U_b \\ 
	S'_{ab} 
	&= S'_b \circ U_a^{\circ-1}\cdot S'_a = (S'_b \cdot S_a) \circ U_a^{\circ-1}.
\end{align*}
\end{lem}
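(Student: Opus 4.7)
The first equality of the first line is exactly the twisted factorisation from Theorem \ref{thm:S_transf_mult_free_proba}. For the second equality, the plan is to substitute $S_b = S'_b \circ U_b$ (from the lemma relating the left and right S-transforms just before the statement) and apply the right-distributivity of composition over multiplication from Lemma \ref{lem:rightaction}:
$$
	S_b \cdot S_a \circ U_b = (S'_b \circ U_b)\cdot (S_a \circ U_b) = (S'_b \cdot S_a)\circ U_b.
$$

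For the second line, I would first establish the rightmost equality, then split it into the displayed middle form. The defining relation $S_{ab} = S'_{ab}\circ U_{ab}$ can be inverted with respect to composition (noting that $U_{ab}\in G_B^{\dif}$ since the preceding lemma provides $U_{ab}^{\circ-1}$ explicitly), giving $S'_{ab} = S_{ab}\circ U_{ab}^{\circ-1}$. From the lemma stating $U_{ab} = U_a\circ U_b$, we get $U_{ab}^{\circ-1} = U_b^{\circ-1}\circ U_a^{\circ-1}$. Plugging in the form $S_{ab} = (S'_b\cdot S_a)\circ U_b$ already proven above and using associativity of composition:
$$
	S'_{ab} = \bigl((S'_b\cdot S_a)\circ U_b\bigr)\circ\bigl(U_b^{\circ-1}\circ U_a^{\circ-1}\bigr) = (S'_b\cdot S_a)\circ U_a^{\circ-1}.
$$

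For the middle equality, the plan is to expand once more by right-distributivity (Lemma \ref{lem:rightaction}) and use $S_a = S'_a\circ U_a$:
$$
	(S'_b\cdot S_a)\circ U_a^{\circ-1} = (S'_b\circ U_a^{\circ-1})\cdot (S_a\circ U_a^{\circ-1}) = (S'_b\circ U_a^{\circ-1})\cdot S'_a,
$$
since $S_a\circ U_a^{\circ-1} = S'_a\circ U_a\circ U_a^{\circ-1} = S'_a$.

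There is no real obstacle here: the whole argument is a direct symbolic manipulation built from Theorem \ref{thm:S_transf_mult_free_proba}, the right-distributivity of Lemma \ref{lem:rightaction}, and the preceding relations $S_F = S'_F\circ U_F$ and $U_{ab} = U_a\circ U_b$. The only care needed is to keep track of the convention that $\circ$ binds tighter than $\cdot$ and to ensure that all composition inverses used actually exist, which is guaranteed because the relevant elements live in the group $G_B^{\dif}$.
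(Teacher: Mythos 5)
Your proposal is correct and is exactly the kind of direct symbolic verification the paper has in mind: the text only remarks that "both lemmas can be verified by simple algebraic manipulations," and your argument supplies those manipulations using precisely the intended ingredients (Theorem \ref{thm:S_transf_mult_free_proba}, the right-distributivity \eqref{rightaction}, $S_F = S'_F\circ U_F$, and $U_{ab}=U_a\circ U_b$). All the compositional inverses you invoke do exist since $U_a, U_b, U_{ab}\in G_B^{\dif}$, so there is nothing to add.
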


\noindent Both lemmas can be verified by simple algebraic manipulations.


\subsection{A formula for the cumulants of a product of free variables}
\label{ssec:formula_cum_prod_free_vars_trees}

The goal of this section is to prove Proposition \ref{prop:formula_cum_product_free_vars}, which states that for $a,b \in \mathcal A$ free, the cumulant $k^{ab}$ factorizes as $k^{ab} = k^a \boxconv k^b$. In other words, an expression of $k^{ab}$ is given, which depends on the cumulants $k^a$ and $k^b$. First, by definition of cumulants, we have
\begin{align}
	\bb E(x_1 a b x_2 \cdots x_n a b) &= \sum_{\tau\in Y_n} k^{ab}_\tau(x_1, \ldots, x_n). \label{eq:def_k^ab}
\end{align}

Here we should emphasize the use of planar binary trees on the righthand side. 

A first step in the proof is to derive an expression of $\bb E(x_1 a b x_2 \cdots x_n a b)$ depending on $k^a$ and $k^b$. One can write, by definition of cumulants,
$$
	\bb E(x_1 a b x_2 \cdots x_n a b) = \sum_{\tau\in Y_{2n}} \kappa_\tau(x_1a, b, x_2a, b, \ldots, x_n a, b).
$$
We will see that some of the trees in the sum actually have no contribution (Lemma \ref{lem:not_split_zero}), and that the other ones can be expressed with $k^a$ and $k^b$ (Lemma \ref{lem:kappa_Ybp_equals_cup}). We will obtain the desired equality in Lemma \ref{lem:other_expr_Exab}. We first need to define some more notation on binary trees.

\begin{defn}
Define recursively the sets of trees
\begin{align*}
	Y^{be}_0 &:= \{\ | \ \}, \\ 
	Y^{bo}_{2n+1} &:= \{ \tovunder{\tau_1}{\tau_2}, \tau_i \in Y^{be}_{2k_i}, k_1+k_2 = n\} \qquad \text{ for }n\ge 0, \\ 
	Y^{be}_{2n} &:= \{ \tovunder{\tau_1}{\tau_2}, \tau_1 \in Y^{bo}_{2k_1+1}, \tau_2 \in Y^{be}_{2k_i}, k_1+k_2 = n-1\} \qquad \text{ for }n\ge 1. \\ 
\end{align*}
Denote also $Y^{be} := \bigcup_{n\ge 0} Y^{be}_{2n}$ and $Y^{bo} := \bigcup_{n\ge 0} Y^{bo}_{2n+1}$. More concisely,
\begin{align}
	Y^{be} &= \tovunder{Y^{bo}}{\, Y^{be}} \cup \{|\}, 	\label{eq:rec_def_Ybe}\\
	Y^{bo} &= \tovunder{Y^{be}}{\, Y^{be}}. 			\label{eq:rec_def_Ybo}
\end{align}
\end{defn}

The grading of elements in $Y^{be}$ and $Y^{bo}$ is the usual one, i.e., by number of vertices. Indeed, if $\tau\in Y^{be}_{2n}$ and $\tilde\tau \in Y^{bo}_{2n+1}$, then $|\tau| = 2n$ respectively $|\tilde\tau| = 2n+1$. Note that for positive $n$, 
$$
	Y^{be}_{2n} = \Big\{\ \tovovunder{\tau_1}{\tau_2}{\tau_3} \ , \tau_i \in Y^{be}_{2k_i}, k_1+k_2+k_3 = n-1 \Big\}.
$$

\begin{defn}
\label{def:treesplitting}
Let $\tau \in Y$ and number its vertices according to the left-to-right order. We say that the tree $\tau$ \textit{splits} if all right arms of $\tau$ have elements of the same parity, i.e., either all odd or all even.
\end{defn}

\begin{rmk}
\label{rmk:def_Ybe_Ybo}
\begin{enumerate}
	\item Going back to Definition \ref{def:R_and_Y^b}, we see that $Y^b_{2n} \subseteq Y^{be}_{2n}$.
	\item The set $Y^{be}$ is closed under the over and under operations, $/$ and $\backslash$. 
	Therefore if $\sigma\in Y_n$, $\tau_1, \ldots, \tau_n\in Y^{be}$, then $\sigma\circ(\tau_1, \ldots, \tau_n)\in Y^{be}$.
	\item It is also true that 
	\begin{align*}
		Y^{b{\color{blue}e}} 
		&= \toverrightcomb{\scriptstyle Y^{b{\color{red}o}}}{\scriptstyle Y^{b{\color{red}o}}}{\scriptstyle Y^{b{\color{red}o}}} &
		Y^{b{\color{red}o}} 
		&= \toverrightcomb{\scriptstyle Y^{b{\color{blue}e}}}{\scriptstyle Y^{b{\color{red}o}}}{\scriptstyle Y^{b{\color{red}o}}}
	\end{align*}
	with the possibility of having any number $n \ge 0$ of appearances of $Y^{b{\color{red}o}}$, and only one appearance of $Y^{be}$ in the diagram of $Y^{bo}$.
\end{enumerate}
\end{rmk}

\begin{lem}
\label{lem:splits_iff_Ybplus}
Let $\tau\in Y_n$, $n\ge 0$. Then $\tau$ splits (Definition \ref{def:treesplitting}) if and only if $\tau \in Y^{be}\cup Y^{bo}$. In particular, 
$$
	\varphi(Y^{be}_{2n}) = \Big\{ P\in \mathrm{NCP}_{2n} \mid P = Q_1 \cup Q_2, Q_1, Q_2 \in \mathrm{NCP}_n \Big\}.
$$
\end{lem}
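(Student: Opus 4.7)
The plan is to prove the equivalence $\tau\in Y^{be}\cup Y^{bo}\Leftrightarrow\tau$ splits by strong induction on $|\tau|$, using the right-comb decomposition given in Remark~\ref{rmk:def_Ybe_Ybo}(3). The base case $\tau=|$ is vacuous. For the inductive step, I would write a non-empty $\tau$ as a right-comb $\tau = \mathrm{RC}(\tau_1, \ldots, \tau_k)$, so that the spine consists of $k$ vertices at positions $p_i := |\tau_1|+\cdots+|\tau_i|+i$, each $\tau_i$ hanging as the left subtree at the $i$-th spine vertex. The key observation is that the right arms of $\tau$ are precisely (i) the spine $\{p_1,\ldots,p_k\}$, and (ii) the right arms of each $\tau_i$, shifted by the constant $p_i - |\tau_i| - 1$; no right arm of $\tau$ can cross between the spine and a $\tau_i$, or between different $\tau_i$'s, because the root of each $\tau_i$ is a left child in $\tau$.

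From this it follows that $\tau$ splits iff each $\tau_i$ splits (constant shifts preserve uniform parity of a set of labels) and the spine $\{p_1,\ldots,p_k\}$ has uniform parity. Since $p_{i+1}-p_i = |\tau_{i+1}|+1$, the spine condition is equivalent to $|\tau_i|$ being odd for every $i\geq 2$. By the inductive hypothesis, the first condition reads $\tau_i \in Y^{be}\cup Y^{bo}$; combined with the parity constraint, and noting that $Y^{bo}$ contains only odd-sized trees while $Y^{be}$ contains only even-sized trees, this forces $\tau_i \in Y^{bo}$ for $i\geq 2$ while leaving $\tau_1 \in Y^{be}\cup Y^{bo}$ free. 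This is exactly the right-comb characterisation of $Y^{be}\cup Y^{bo}$ in Remark~\ref{rmk:def_Ybe_Ybo}(3), completing the induction.

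For the "in particular" statement, the explicit description of $\varphi$ in Remark~\ref{def:varphi_explicit} identifies the blocks of $\varphi(\tau)$ with the right arms of $\tau$. Hence $\tau$ splits iff every block of $\varphi(\tau)$ has uniform parity, which for $P\in \mathrm{NCP}_{2n}$ is equivalent to $P = Q_1 \cup Q_2$ for some $Q_1, Q_2 \in \mathrm{NCP}_n$ (namely the relabeled restrictions of $P$ to odd and even positions, which are automatically noncrossing since restrictions of noncrossing partitions are noncrossing). Since $Y^{bo}$ contains only odd-sized trees, $(Y^{be}\cup Y^{bo})\cap Y_{2n} = Y^{be}_{2n}$, yielding the claimed description of $\varphi(Y^{be}_{2n})$.

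The main technical point to verify carefully is the right-arm decomposition in the inductive step, i.e., that the right arms of $\tau$ are exhausted by the spine together with the shifted right arms of each $\tau_i$; once this is in place, the rest of the argument is a direct unfolding of the recursive definitions of $Y^{be}$, $Y^{bo}$ and of "splits".
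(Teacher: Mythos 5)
Your proof is correct, and it takes a mildly but genuinely different route from the paper's. The paper peels off a single vertex: it writes $\tau = \tovunder{\sigma}{\rho}$, observes that the root lies in the same right arm as the last vertex (so its position $r=|\sigma|+1$ must have the parity of $|\tau|$), deduces that $\tau$ splits iff $|\rho|$ is even, $|\sigma|$ has parity opposite to $|\tau|$, and both subtrees split, and then matches this against the binary recursions \eqref{eq:rec_def_Ybe}--\eqref{eq:rec_def_Ybo}. You instead peel off the entire rightmost spine at once, writing $\tau=\mathrm{RC}(\tau_1,\dots,\tau_k)$, identify the right arms of $\tau$ as the spine together with the (shifted) right arms of the $\tau_i$, and translate the spine's uniform parity into ``$|\tau_i|$ odd for $i\ge 2$'', which you then match against the right-comb characterisation in Remark \ref{rmk:def_Ybe_Ybo}(3) rather than the defining recursions. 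The two inductions carry the same content -- the paper's ``root shares a right arm with the last vertex'' is exactly the statement that the spine is a single right arm -- but your version makes the structure of all right arms explicit, which buys you two things: the parity bookkeeping is arguably more transparent (each arm is either a shifted arm of some $\tau_i$ or the spine itself), and the ``in particular'' statement about $\varphi(Y^{be}_{2n})$ falls out immediately since blocks of $\varphi(\tau)$ are precisely right arms; the paper leaves that second part implicit. The only dependency worth flagging is your use of Remark \ref{rmk:def_Ybe_Ybo}(3), which the paper states without proof, but it follows by iterating the binary definitions along the spine, so nothing is lost.
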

\begin{proof}
Let us proceed by induction on the size of $\tau$. It is obviously true for $|\tau|=0$. Then, let us write 
$$
	\tau = \tovunder{\sigma}{\rho}.
$$ 
Number the vertices of $\tau$ according to the left-to-right order, and denote by $r$ the root's number. The root is in the same right arm as the last vertex, numbered $|\tau|$. So if $\tau$ splits, then $r$ is of the same parity as $|\tau|$, so $|\rho|=|\tau|-r$ is even, and $|\sigma| = |\tau|-|\rho|-1$ is of different parity than $|\tau|$.  Moreover, if $\tau$ splits, $\rho$ and $\sigma$ also split. We now note that these properties characterize the splitting of $\tau$: $\tau$ splits if and only if $|\rho|$ is even, $|\sigma|$ is of different parity than $|\tau|$, and $\sigma$ and $\rho$ split. The result follows from the recursive definitions \eqref{eq:rec_def_Ybe} and \eqref{eq:rec_def_Ybo}.
\end{proof}

Only the trees that split are relevant in our calculation:

\begin{lem}
\label{lem:not_split_zero}
Let $\tau\in Y_{2n}$ be such that it does not split. Then for all $x_1, y_1, \ldots, x_n, y_n \in B$,
$$
	\kappa_\tau(x_1a, y_1b, \ldots, x_na, y_nb) = 0
$$
\end{lem}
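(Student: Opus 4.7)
The plan is to prove a slightly generalized statement by induction on $|\sigma|$: for any $\sigma \in Y_m$, any $c_1, \ldots, c_m \in \{a, b\}$, and any $\beta_1, \ldots, \beta_m \in B$, if some right arm of $\sigma$ contains vertices with non-uniform $c$-values, then $\kappa_\sigma(\beta_1 c_1, \ldots, \beta_m c_m) = 0$. Specializing to $m = 2n$, $c_{2i-1} = a$, $c_{2i} = b$, $\beta_{2i-1} = x_i$, $\beta_{2i} = y_i$ recovers the lemma, since under this specialization the right arms of $\sigma$ with mixed $c$-values are exactly those with vertices of mixed parity in the left-to-right order, i.e., exactly the witnesses to $\sigma$ not splitting.

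For the induction step, decompose $\sigma = \toverrightcomb{\sigma_1}{\sigma_2}{\sigma_k}$ and let $j_1 < \cdots < j_k = m$ denote the positions of the outer right-spine vertices. Unrolling one step of the recursive definition of $\kappa_\sigma$ gives
\[
\kappa_\sigma(\beta_1 c_1, \ldots, \beta_m c_m) = \kappa_k\bigl(\alpha_1 \beta_{j_1} c_{j_1},\ \ldots,\ \alpha_k \beta_{j_k} c_{j_k}\bigr),
\]
where $\alpha_i := \kappa_{\sigma_i}(\beta_{j_{i-1}+1} c_{j_{i-1}+1}, \ldots, \beta_{j_i-1} c_{j_i-1}) \in B$. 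A key structural fact is that each $\sigma_i$ is a full subtree of $\sigma$, so every right arm of $\sigma$ is either the outer spine itself or lies entirely inside some single $\sigma_i$; hence the hypothesis forces at least one of two cases.

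In case (B), some $\sigma_i$ has an internal right arm with mixed $c$-values: the inductive hypothesis applied to $\sigma_i$ yields $\alpha_i = 0$, so the whole outer call vanishes by multilinearity. In case (A), the outer spine itself has mixed $c_{j_i}$'s: setting $\gamma_i := \alpha_i \beta_{j_i} \in B$, the outer call becomes $\kappa_k(\gamma_1 c_{j_1}, \ldots, \gamma_k c_{j_k})$, and repeated application of the $B$-balancedness identity \eqref{eq:kappa_balanced} to slide each $\gamma_i$ ($i \ge 2$) to the right end of the preceding slot, together with factoring $\gamma_1$ outside, rewrites this as $\gamma_1 \cdot \kappa_k(c_{j_1}, \gamma_2 c_{j_2}, \ldots, \gamma_k c_{j_k})$, which vanishes by Theorem \ref{thm:freeness_mixed_cum_vanish} since the $c_{j_i}$'s are not all equal.

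The only nontrivial bookkeeping is in case (A): one has to carefully reshape the outer $\kappa_k$ call into the exact form appearing in Theorem \ref{thm:freeness_mixed_cum_vanish} by repeated application of $B$-balancedness. This is not conceptually difficult but is notation-heavy. The structural observation that right arms partition cleanly between the outer spine and the $\sigma_i$'s, the vacuous base case $\sigma = |$, and the multilinearity argument in case (B) are all immediate.
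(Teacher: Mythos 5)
Your proof is correct and follows essentially the same route as the paper's: decompose $\tau$ along the right spine, kill the term via Theorem \ref{thm:freeness_mixed_cum_vanish} when the spine carries mixed types, and otherwise recurse into a subtree $\tau_i$ that fails to split and conclude by multilinearity. Your explicit strengthening of the induction hypothesis to arbitrary $\{a,b\}$-labelings is a clean way to handle the odd-length, $a$-terminated argument lists arising in the recursive calls, which the paper's proof passes over implicitly.
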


\begin{proof}
Let us write
$$
	\tau = \toverrightcomb{\tau_1}{\tau_2}{\tau_m}.
$$ 
If there are even and odd elements in the right arm containing the root, then 
$$
	\kappa_\tau(x_1a, y_1b, \ldots, x_na, y_nb) = \kappa_m(\ldots, \kappa_{\tau_i}(\ldots) a, \ldots, \kappa_{\tau_j}(\ldots) b, \ldots) = 0
$$
for some $1\le i,j\le m$. The expression is zero thanks to Theorem \ref{thm:freeness_mixed_cum_vanish}. Otherwise, one of the $\tau_i$ does not split so $\kappa_{\tau_i}(x_{j_{i-1}+1}a, y_{j_{i-1}+1}b, \ldots, x_{j_i}a) = 0$ by induction. Therefore
$$
	\kappa_\tau(x_1a, y_1b, \ldots, x_na, y_nb) 
	= \kappa_m\Big(\kappa_{\tau_i}(x_1a, y_1b, \ldots, x_{j_1}a) y_{j_1} b, \ldots, 
	\underset{=0}{\underbrace{\kappa_{\tau_i}(x_{j_{i-1}+1}a, y_{j_{i-1}+1}b, \ldots, x_{j_i}a)}} y_{j_i} b, \ldots\Big) = 0.
$$

\end{proof}

We can now write $\kappa_\tau(x_1a, y_1b, \ldots, x_na, y_nb)$ as a function of $k^a$ and $k^b$, when $\tau$ splits. We will only need this for $\tau\in Y^{be}$, but it is nice to see that there is a similar expression for $\tau\in Y^{bo}$, and it also helps the inductive proof.

\begin{lem}
\label{lem:kappa_Ybp_equals_cup}
If $\tau \in Y^{be}_{2n}$, $\sigma \in Y^{bo}_{2n+1}$, then for all $x_1, y_1, \ldots, x_n, y_n, x_{n+1} \in B$,
\begin{align*}
	\kappa_\tau(x_1a, y_1b, \ldots, x_na, y_nb) &= (k^a\cup k^b)_\tau(x_1, y_1, \ldots, x_n, y_n), \\
	\kappa_\sigma(x_1a, y_1b, \ldots, y_nb, x_{n+1}a) &= (k^b\cup k^a)_\sigma(x_1, y_1, \ldots,x_n, y_n, x_{n+1}).
\end{align*}

\end{lem}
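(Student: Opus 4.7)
The plan is to prove both identities by mutual induction on the number of vertices, combining the right-comb decompositions of $\tau\in Y^{be}$ and $\sigma\in Y^{bo}$ recorded in Remark \ref{rmk:def_Ybe_Ybo}(3) with the recursive formulas for $\kappa_\tau$ and $(f\cup g)_\tau$ from Definition \ref{def:treenesting}. The bridge from $\kappa$ to the series $k^a,k^b$ is the $B$-balancedness of $\kappa$ (Remark \ref{rmk:cumulants}(2)), which yields $\kappa_m(z_1 a,\ldots,z_m a)=k^a_m(z_1,\ldots,z_m)$ and its analog for $b$. Both base cases are immediate: $\tau=|$ gives $1=1$, while $\sigma=\tO$ reduces to the defining equality $\kappa_1(x_1 a)=k^a_1(x_1)$.

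Before launching the induction I will record a symmetry: interchanging $a\leftrightarrow b$ (and hence $k^a\leftrightarrow k^b$) in the second identity yields the ``swapped'' variant
\begin{equation*}
    \kappa_\sigma(y_1 b,x_1 a,y_2 b,\ldots,x_n a,y_{n+1} b) = (k^a\cup k^b)_\sigma(y_1,x_1,\ldots,x_n,y_{n+1}) \quad\text{for }\sigma\in Y^{bo}_{2n+1},
\end{equation*}
which will be invoked when the induction descends into a subtree whose arguments alternate in the opposite orientation.

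For the first identity, I will write $\tau=\mathrm{RC}(\tau_1,\ldots,\tau_m)$ with $\tau_i\in Y^{bo}_{2k_i+1}$. Since $\tau$ splits and ends at even position $2n$, every right-arm vertex sits at an even position and therefore carries a $b$-labeled argument $y_{q_i} b$. Each $\tau_i$ then sees its arguments in the \emph{original} orientation (starting and ending with $a$), so the induction hypothesis for the second identity applies and replaces $\kappa_{\tau_i}(\ldots)$ by $(k^b\cup k^a)_{\tau_i}(\ldots)$. Substituting into the right-comb formula for $\kappa_\tau$ and using $B$-balancedness to absorb the $b$-factors, the outer $\kappa_m$ becomes $k^b_m$, which is precisely the right-hand side of Definition \ref{def:treenesting}(2) for $(k^a\cup k^b)_\tau$.

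For the second identity, write $\sigma=\mathrm{RC}(\sigma_1,\ldots,\sigma_m)$ with $\sigma_1\in Y^{be}$ and $\sigma_i\in Y^{bo}$ for $i\ge 2$. Now the right-arm positions are all odd, so each right-arm vertex is fed an $a$-argument $x_{q_i} a$. The leftmost subtree $\sigma_1$ receives its arguments in the original orientation, so the first identity applies directly. The main (mild) obstacle is that the subtrees $\sigma_i$ for $i\ge 2$ begin at an even position and thus receive their arguments in the \emph{swapped} orientation $(b,a,\ldots,b)$; for these I will invoke the swapped form of the second identity recorded above. Once every subtree cumulant has been rewritten, $B$-balancedness converts the outer $\kappa_m$ into $k^a_m$, which matches the formula for $(k^b\cup k^a)_\sigma$ from Definition \ref{def:treenesting}(2).
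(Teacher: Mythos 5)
Your proposal is correct and follows essentially the same route as the paper: a mutual induction on the two identities using the right-comb decompositions $Y^{be}=\mathrm{RC}(Y^{bo},\ldots,Y^{bo})$ and $Y^{bo}=\mathrm{RC}(Y^{be},Y^{bo},\ldots,Y^{bo})$, with the inner subtree cumulants rewritten via the induction hypothesis and the outer $\kappa_m$ absorbed into $k^b_m$ (resp.\ $k^a_m$). Your explicit upfront statement of the $a\leftrightarrow b$ swapped variant of the second identity is exactly the point the paper handles with its closing remark that the second equality is used inductively with the roles of $a$ and $b$ exchanged.
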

\begin{proof}
We work by induction on both statements at the same time, and rely on item 3 of Remark \ref{rmk:def_Ybe_Ybo}. For the first one, we can write write 
$$
	\tau = \toverrightcomb{\tau_1}{\tau_2}{\tau_k}
$$ 
with $\tau_i\in Y^{bo}$. Therefore, by induction
\begin{align*}
	\lefteqn{\kappa_\tau(x_1a, y_1b, \ldots, x_na, y_nb)}\\ 
	&= \kappa_k\Big(\kappa_{\tau_1}(x_1a, y_1b, \ldots, y_{j_1-1}b, x_{j_1}a) y_{j_1}b, \ldots, \kappa_{\tau_k}(x_{j_{k-1}+1}a, y_{j_{k-1}+1}b \ldots, y_{n-1}b, x_na) y_nb\Big) \\ 
	&= k^b\Big(\kappa_{\tau_1}(x_1a, y_1b \ldots, y_{j_1-1}b, x_{j_1}a) y_{j_1}, \ldots, \kappa_{\tau_k}(x_{j_{k-1}+1}a, y_{j_{k-1}+1}b \ldots, y_{n-1}b, x_na) y_n\Big) \\
	&= k^b\Big((k^b\cup k^a)_{\tau_1}(x_1, y_1, \ldots, y_{j_1-1}, x_{j_1}) y_{j_1}, \ldots, (k^b\cup k^a)_{\tau_k}(x_{j_{k-1}+1}, y_{j_{k-1}+1}, \ldots, y_{n-1}, x_n) y_n\Big) \\ 
	&= (k^a\cup k^b)_\tau(x_1, y_1, \ldots, x_n, y_n)
\end{align*}

For the second statement, we can again write 
$$
	\sigma = \toverrightcomb{\tau}{\sigma_1}{\sigma_k}
$$ 
with this time $\tau\in Y^{be}$, $\sigma_i\in Y^{bo}$, and by induction
\begin{align*}
	\kappa_\sigma&(x_1a, y_1b, \ldots, y_nb, x_{n+1}a)\\
	&= \kappa_{k+1}\Big(\kappa_{\tau}(x_1a, y_1b, \ldots, x_{j_1-1}a, y_{j_1-1}b) x_{j_1}a, \kappa_{\sigma_1}(y_{j_1}b, x_{j_1+1}a, \ldots, x_{j_2-1}a, y_{j_2-1}b)x_{j_2}a, \ldots \\ 
	& \hspace{230pt} \ldots, \kappa_{\sigma_k}(y_{j_{k+1}}b, x_{j_{k+1}+1}a, \ldots, x_na, y_nb) x_{n+1}a\Big) \\ 
	&= k^a\Big(\kappa_{\tau}(x_1a, y_1b, \ldots, x_{j_1-1}a, y_{j_1-1}b) x_{j_1}, \kappa_{\sigma_1}(y_{j_1}b, x_{j_1+1}a, \ldots,x_{j_2-1}a, y_{j_2-1}b)x_{j_2}\ldots \\ 
	&\hspace{230pt} \ldots, \kappa_{\sigma_k}(y_{j_{k+1}}b, x_{j_{k+1}+1}a,\ldots, x_na, y_nb) x_{n+1}\Big) \\
	&= k^a\Big((k^a\cup k^b)_{\tau}(x_1, y_1, \ldots, x_{j_1-1}, y_{j_1-1}) x_{j_1}, (k^a\cup k^b)_{\sigma_1}(y_{j_1}, x_{j_1+1}, \ldots, x_{j_2-1}, y_{j_2-1})x_{j_2},\ldots \\ 
	&\hspace{230pt} \ldots, (k^a\cup k^b)_{\sigma_k}(y_{j_{k+1}}, x_{j_{k+1}+1}, \ldots, x_n, y_n) x_{n+1}\Big) \\ 
	&= (k^b\cup k^a)_{\sigma}(x_1, y_1, \ldots, y_n, x_{n+1}).
\end{align*}

Note that we use the second equality inductively, but with the roles of $a$ and $b$ exchanged.
\end{proof}

We can finally get the second expression for $\bb E(x_1 a b x_2 \cdots x_n a b)$ using only $k^a$ and $k^b$:

\begin{lem}
\label{lem:other_expr_Exab}
For $x_1, \ldots, x_n\in B$:
$$
	\bb E(x_1 a b x_2 \cdots x_n a b) = \sum_{\sigma\in Y^{be}_{2n}} (k^a\cup k^b)_\sigma(x_1, 1, x_2, 1, \ldots, x_n, 1).
$$
\end{lem}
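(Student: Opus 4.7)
The plan is to combine the three preceding lemmas in a straightforward way. First I would expand the left-hand side using the definition of operator-valued cumulants. Since each $x_i \in B$ acts on $\mathcal{A}$, the element $x_i a$ lies in $\mathcal{A}$, so I can regard $x_1 a b x_2 a b \cdots x_n a b$ as an $\mathcal{A}$-product of $2n$ factors $(x_1 a)(b)(x_2 a)(b)\cdots (x_n a)(b)$. Applying \eqref{eq:def_of_cumulants} gives
\[
\bb E(x_1 a b x_2 \cdots x_n a b) = \sum_{\tau \in Y_{2n}} \kappa_\tau(x_1 a, b, x_2 a, b, \ldots, x_n a, b).
\]

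Next I would prune the sum. By Lemma \ref{lem:not_split_zero}, every $\tau \in Y_{2n}$ that does not split contributes $0$. By Lemma \ref{lem:splits_iff_Ybplus}, a tree in $Y_{2n}$ that splits must lie in $Y^{be} \cup Y^{bo}$; but elements of $Y^{bo}$ have an odd number of vertices, so $Y^{bo} \cap Y_{2n} = \emptyset$. Hence the only surviving trees are those in $Y^{be}_{2n}$, giving
\[
\bb E(x_1 a b x_2 \cdots x_n a b) = \sum_{\tau \in Y^{be}_{2n}} \kappa_\tau(x_1 a, b, x_2 a, b, \ldots, x_n a, b).
\]

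Finally I would apply Lemma \ref{lem:kappa_Ybp_equals_cup} with $y_1 = \cdots = y_n = 1$ to each remaining term to obtain
\[
\kappa_\tau(x_1 a, b, \ldots, x_n a, b) = (k^a \cup k^b)_\tau(x_1, 1, x_2, 1, \ldots, x_n, 1),
\]
which yields the claimed identity. There is no serious obstacle here: the entire content is a bookkeeping assembly of the cumulant definition, the vanishing-of-non-split-trees lemma, the characterisation of splitting trees by $Y^{be} \cup Y^{bo}$, and the explicit $(k^a \cup k^b)_\tau$ formula already proved in Lemma \ref{lem:kappa_Ybp_equals_cup}. The only minor point worth flagging is the parity argument that eliminates $Y^{bo}$ from the $2n$-vertex sum, which is immediate from the gradings $Y^{be}_{2n}$ and $Y^{bo}_{2n+1}$.
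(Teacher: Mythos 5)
Your proposal is correct and follows exactly the same route as the paper's proof: expand via the cumulant definition over $Y_{2n}$, discard non-splitting trees by Lemma \ref{lem:not_split_zero} and the characterisation in Lemma \ref{lem:splits_iff_Ybplus} (with the parity observation ruling out $Y^{bo}$), and then apply Lemma \ref{lem:kappa_Ybp_equals_cup} with all $y_i=1$. The only difference is that you spell out the parity step eliminating $Y^{bo}$, which the paper leaves implicit.
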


\begin{proof}
By definition of cumulants
$$
	\bb E(x_1 a b x_2 \cdots x_n a b) = \sum_{\tau\in Y_{2n}} \kappa_\tau(x_1a, b, x_2a, b, \ldots, x_n a, b).
$$
From Lemmas \ref{lem:splits_iff_Ybplus} and \ref{lem:not_split_zero}, only the $\tau \in Y^{be}_{2n}$ have a non-zero contribution:
$$
	\bb E(x_1 a b x_2 \cdots x_n a b) = \sum_{\tau\in Y^{be}_{2n}} \kappa_\tau(x_1a, b, x_2a, b, \ldots, x_n a, b).
$$
It finally suffices to rewrite $\kappa_\tau(x_1a, b, x_2a, b, \ldots, x_n a, b)$ as $(k^a\cup k^b)_\tau(x_1, 1, x_2, 1, \ldots, x_n, 1)$ thanks to Lemma \ref{lem:kappa_Ybp_equals_cup}.
\end{proof}

Therefore, combining this with equality \eqref{eq:def_k^ab} gives
\begin{align}
	\sum_{\tau\in Y_n} k^{ab}_\tau(x_1, \ldots, x_n) 
	&= \sum_{\sigma\in Y^{be}_{2n}} (k^a\cup k^b)_\sigma(x_1, 1, x_2, 1, \ldots, x_n, 1). \label{eq:sum_kab_ka_kb}
\end{align}

It remains now to extract the expression for $k^{ab}_n$ from this equality, this will take a bit more work. The idea is that each term $k^{ab}_\tau(x_1, \ldots, x_n) $ in the sum on the left-hand-side corresponds to a sum over a subset of $Y^{be}_{2n}$, that will be called $\Pi(\tau)$, and defined in Definition \ref{def:Pi_tau_subset_Y^be}. Let us first give a lemma that extends Lemma \ref{lem:compo_multilinear_series_trees} to series based on two multilinear series $f,g \in \Mult[[B]]$. Once again, the substitution by right-planted trees behaves well with respect to $(f\cup g)_\tau$.


\begin{lem}
\label{lem:compo_multilinear_series_trees_bis}
Let $f, g\in \mathrm{Mult}[[B]]$ and $x_1, y_1, \ldots, x_n, y_n \in B$. Then, with the notation of omitted ordered indices (as in the proof of Lemma \ref{lem:compo_multilinear_series_trees}),
\begin{enumerate}
	\item If $\rho\in Y^{be}_{2k}$, $\tover{\sigma_1}, \ldots, \tover{\sigma_{2k}}\in Y^{be}/\tO$ with $2n = \sum |\tover{\sigma_i}|$, then
	\begin{align*}
		\lefteqn{(f\cup g)_{\rho \circ \big(\smtover{\sigma_1}, \ \ldots, \ \smtover{\sigma_{2k}} \big)}(x_1, y_1,  \ldots, x_n, y_n)}\\
		&= (f\cup g)_\rho\Big((f\cup g)_{\sigma_1}(x, y, \ldots, x, y)x, (g\cup f)_{\sigma_2}(y, x, \ldots, y, x)y, ~\ldots \\
		&\quad	 
		\ldots~, (f\cup g)_{\sigma_{2k-1}}(x, y, \ldots, x, y)x, (g\cup f)_{\sigma_{2k}}(y, x, \ldots, y, x)y \Big).
	\end{align*}
	\item If $\rho\in Y^{bo}_{2k-1}$, $\tover{\sigma_1}, \ldots, \tover{\sigma_{2k-1}}\in Y^{be}/\tO$ with $2n-1 = \sum |\tover{\sigma_i}|$, then
	\begin{align*}
		\lefteqn{(f\cup g)_{\rho \circ \big(\smtover{\sigma_1}, \ \ldots, \ \smtover{\sigma_{2k-1}} \big)}(x_1, y_1,  \ldots, y_{n-1}, x_n)}\\
		&=  (f\cup g)_\rho\Big((g\cup f)_{\sigma_1}(x, y, \ldots, x, y)x, (f\cup g)_{\sigma_2}(y, x, \ldots, y, x)y, ~
		 \ldots ~, (g\cup f)_{\sigma_{2k-1}}(x, y, \ldots, x, y)x\Big).
	\end{align*}
\end{enumerate}
\end{lem}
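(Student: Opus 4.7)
The plan is to prove (1) and (2) together by simultaneous strong induction on $|\rho|$, mirroring the template of Lemma \ref{lem:compo_multilinear_series_trees}. The base cases $\rho = |$ (for case (1)) and $\rho = \tO$ (for case (2)) are direct: in the first both sides equal $1 \in B$; in the second $\tover{\sigma_1}$ has a one-piece right-comb and its $(f \cup g)$-evaluation unfolds directly to the claimed $g_1((g \cup f)_{\sigma_1}(\ldots) x_n)$. For the inductive step I would use the right-comb decomposition $\rho = \mathrm{RC}(\rho_1, \ldots, \rho_j)$ together with item (3) of Remark \ref{rmk:def_Ybe_Ybo}, which pins down the type of each piece: in case (1) every $\rho_i \in Y^{bo}$, while in case (2) $\rho_1 \in Y^{be}$ and $\rho_2, \ldots, \rho_j \in Y^{bo}$.

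The operadic substitution distributes over the right-comb decomposition, so $\rho \circ (\tover{\sigma_1}, \ldots)$ is itself a right comb. Its $i$-th piece is, up to the under-graft by the $\sigma/\tO$ attached at the comb-root vertex of $\rho_i$, the composition $\rho_i \circ (\tover{\sigma_\bullet})$ on the sub-block of $\sigma_\bullet$'s indexed by the vertices of $\rho_i$. Applying the recursive definition of $(f \cup g)$ on a right comb unfolds the outer computation into $g_j(\ldots)$ in case (1) and $f_j(\ldots)$ in case (2); each slot is of the form $(g \cup f)$ applied to the corresponding inner piece, multiplied on the right by the value $z \in \{x, y\}$ attached to the comb-root vertex of $\rho_i$ in the composed tree. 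Invoking the inductive hypothesis (the \emph{other} statement) on each such inner $(g \cup f)$-expression then rewrites it as $(g \cup f)_{\rho_i}(\ldots)$ acting on arguments that alternate between $(g \cup f)_{\sigma_\bullet}$ and $(f \cup g)_{\sigma_\bullet}$, each trailed by the appropriate $x$ or $y$.

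The main obstacle is purely bookkeeping: one must verify that the alternation pattern $(f \cup g)_{\sigma_i}$ versus $(g \cup f)_{\sigma_i}$ in the final expression agrees with the parity of $i$ demanded by the statement, and that the trailing $x$- or $y$-values match. This amounts to checking that the swap $f \leftrightarrow g$ in the $\cup$-notation commutes with descending one level in the right-comb recursion, and that this swap is compatible with the $Y^{be} \leftrightarrow Y^{bo}$ alternation dictated by Remark \ref{rmk:def_Ybe_Ybo}. A subtle point worth emphasizing is that in case (2) the first piece $\rho_1 \in Y^{be}$ is handled by the case (1) inductive hypothesis while the remaining $\rho_i \in Y^{bo}$ invoke case (2); the manifest $f \leftrightarrow g$ symmetry of the \emph{cup} construction is exactly what guarantees the two applications assemble into a single consistent expression.
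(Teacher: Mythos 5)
Your overall strategy --- a simultaneous strong induction on $|\rho|$ driven by the right-comb decomposition and item (3) of Remark \ref{rmk:def_Ybe_Ybo} --- is exactly what the paper intends (its proof says only that one argues ``as in Lemma \ref{lem:compo_multilinear_series_trees} but with two inductions at the same time''), and your parity analysis of the $f\leftrightarrow g$ alternation is correct. The gap sits precisely in the step you dismiss as ``purely bookkeeping''. Write $\rho=\mathrm{RC}(\rho_1,\ldots,\rho_m)$ with spine positions $j_1<\cdots<j_m$. The $i$-th piece of the right-comb decomposition of the composed tree is not $\rho_i\circ(\tover{\sigma_\bullet})$ but
$\pi_i=\bigl(\rho_i\circ(\tover{\sigma_{j_{i-1}+1}},\ldots,\tover{\sigma_{j_i-1}})\bigr)/\sigma_{j_i}$:
the tree $\sigma_{j_i}$ substituted at the spine vertex lands \emph{inside} the piece. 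Hence the inductive hypothesis does not apply to the slot $(g\cup f)_{\pi_i}(\ldots)$ as it stands; you must first establish the over-graft factorisation $(g\cup f)_{\alpha/\beta}(\ldots)=(g\cup f)_{\alpha}(\ldots)\,(g\cup f)_{\beta}(\ldots)$ for the relevant argument blocks. This is the analogue of the unstated step $f_{\alpha}(\ldots)f_{\beta}(\ldots)=f_{\alpha/\beta}(\ldots)$ in the proof of Lemma \ref{lem:compo_multilinear_series_trees}, and it is not combinatorial bookkeeping: it requires pulling a left factor of $B$ out of the first slot of $f_k$ and $g_k$, i.e.\ it requires $f,g\in I\cdot\mathrm{Mult}[[B]]$ --- which is exactly why Lemma \ref{lem:compo_multilinear_series_trees} carries that hypothesis, while the present statement does not.

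To see that this is a genuine obstruction and not a presentational one, take $\rho=\tA\in Y^{be}_2$, $\sigma_1=|$, $\sigma_2=\tA$, so that $\rho\circ\bigl(\tO,\tover{\tA}\bigr)$ is the left comb on four vertices. Item (1) then asserts
$g_1\bigl(f_1\bigl(g_1(f_1(x_1)y_1)x_2\bigr)y_2\bigr)=(f\cup g)_{\tA}\bigl(x_1,\;(g\cup f)_{\tA}(y_1,x_2)\,y_2\bigr)=g_1\bigl(f_1(x_1)\,f_1(g_1(y_1)x_2)\,y_2\bigr)$,
which fails for general linear maps $f_1,g_1$ (e.g.\ $B$ a matrix algebra and $f_1$ the transpose); it holds once $h_1(bz)=b\,h_1(z)$ for $h\in\{f,g\}$. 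So your proof cannot close as written: you need either to add the hypothesis $f,g\in I\cdot\mathrm{Mult}[[B]]$ and prove the over-graft factorisation as a separate lemma (harmless for the paper's application, since $k^a,k^b\in G_B^I$), or to restrict to the situation actually used in Lemma \ref{lem:extract_sum_Y^bp_convol}, where the trees substituted at the spine positions are all $\tO$ and the factorisation step is vacuous. Everything else in your outline (base cases, the $Y^{be}/Y^{bo}$ alternation, the cross-invocation of the two statements) is sound.
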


\begin{proof}
It is similar to the proof of Lemma \ref{lem:compo_multilinear_series_trees} but with two inductions at the same time. We remark that it does not seem to have a nice operadic interpretation.
\end{proof}

\begin{defn}
\label{def:Pi_tau_subset_Y^be}
Define a map $\Pi : Y \to \mathcal P(Y^{be})$ (where $\mathcal P(Y^{be})$ is the power set, i.e., the set of subsets of $Y^{be}$) recursively by
\begin{align*}
	\Pi (~|~) &= \{~|~\} \\ 
	\Pi \Big(\toverrightcomb{\tau_1}{\tau_2}{\tau_k}\ \Big) &= \Big\{ \rho \circ (\tover{\sigma_1}, \ \tO, \ \ldots, \ \tover{\sigma_k}, \tO \ ), \ \rho \in Y^b_{2k}, \sigma_i \in \Pi(\tau_i) \Big\}.
\end{align*}
\end{defn}

\begin{rmk}
\begin{enumerate}
\item
	Note that in particular, for $n\ge 1$, we have
	\[
		\Pi \Big(\trightcomb n\ \Big) = Y^b_{2n}.
	\]
\item
	It is not obvious at all from the definition, but for $\tau \in Y$, we have, with $P = \varphi(\tau)$,
	$$
		\varphi(\Pi(\tau)) = \Big\{ P\cup K_P(Q) \in \NCP_{2n} \mid Q\in \NCP_n, Q\le P \Big\},
	$$
	where $K_P$ is the relative Kreweras complement. See \cite{nica_speicher_book} for more details on the relative Kreweras complement.
\end{enumerate}
\end{rmk}

\begin{lem}
\label{lem:Y^bp_two_decomp}
Every tree $\tau \in Y^{be}$ has two unique decompositions
\begin{align*}
	\tau &= \rho \circ (\tover{\sigma_1}, \ \tO, \ \ldots, \ \tover{\sigma_k}, \ \tO \ ) \\ 
	\tau &= \rho \circ (\ \tO, \tover{\sigma_1}, \ \ldots, \ \tO, \ \tover{\sigma_k} \ )
\end{align*}
with $\rho \in Y^b$ and $\sigma_1, \ldots, \sigma_k \in Y^{be}$. Note that $k$ can be different in the two decompositions.
\end{lem}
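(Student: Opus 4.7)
I would prove both decompositions by strong simultaneous induction on $|\tau|$. The base case $\tau = |$ is immediate with $\rho = |$, $k = 0$ in each. For the inductive step, the recursive definitions of $Y^{be}$ and $Y^{bo}$ yield the unique ternary decomposition $\tau = \tovunder{\tovunder{A}{B}}{C}$ with $A, B, C \in Y^{be}$, which provides the structural handle.

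For the first decomposition, the inductive hypothesis supplies the unique second decomposition $B = \rho^B \circ (\tO, \tover{\beta_1}, \ldots, \tO, \tover{\beta_{k_B}})$ and the unique first decomposition $C = \rho^C \circ (\tover{\gamma_1}, \tO, \ldots, \tover{\gamma_{k_C}}, \tO)$ with $\rho^B, \rho^C \in Y^b$. Setting $\rho = \trecb{\rho^B}{\rho^C} \in Y^b$, the vertices of $\rho$ in left-to-right order are the leftmost vertex $w$ (with empty left subtree in $\rho$), then the vertices of $\rho^B$, then the root of $\rho$, then the vertices of $\rho^C$. Because $|\rho^B|$ and $|\rho^C|$ are even, the concatenated sequence $(\tover{A}, \tO, \tover{\beta_1}, \tO, \ldots, \tover{\beta_{k_B}}, \tO, \tover{\gamma_1}, \tO, \ldots, \tover{\gamma_{k_C}}, \tO)$ satisfies the required alternating pattern. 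Substituting $\tover{A}$ at $w$ grafts $A$ as $w$'s new left subtree (since $L_w = |$ in $\rho$); the $\rho^B$-block reproduces $B$; the root of $\rho$ receives $\tO$ and is preserved; the $\rho^C$-block reproduces $C$. The resulting tree is $\tovunder{\tovunder{A}{B}}{C} = \tau$. Uniqueness follows by reversing the argument: given any such decomposition, $\rho \neq |$ decomposes uniquely as $\trecb{\rho^1}{\rho^2}$, which forces $\sigma_1 = A$, and then inductive uniqueness pins down $\rho^1, \rho^2$ as the skeletons of $B$ and $C$.

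The second decomposition is more subtle and is where the main obstacle lies. The construction rests on an auxiliary lemma: every $X \in Y^{bo}$ can be written uniquely as $X = \sigma \triangleleft \tovunder{|}{X_1}$ with $\sigma \in Y^{be}$ and $X_1 \in Y^{be}$, where $\tau \triangleleft T$ denotes the replacement of the leftmost leaf of $\tau$ by $T$. Structurally one locates the leftmost-descendant vertex $u$ of $X$ and sets $X_1$ equal to its right subtree and $\sigma$ equal to $X$ with $u$'s parent's left subtree replaced by $|$ (the degenerate case, where $A = |$ in $X = \tovunder{A}{B}$, is handled by $\sigma = |$ and $X_1 = B$). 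The technical core is showing that $\sigma \in Y^{be}$, equivalently that the modified left factor $\tilde A$ of $X$ lies in $Y^{bo}$; I would prove this by induction on $|A|$ using the recursive structure: in the base $A = \tA$, $\tilde A = \tO \in Y^{bo}$; in the recursive case $A = \tovunder{\tovunder{A'}{B'}}{C'}$, either $A' = |$ (giving $\tilde A = \tovunder{|}{C'} \in Y^{bo}$ directly) or $A' \neq |$ (giving $\tilde A = \tovunder{\tovunder{\tilde A'}{B'}}{C'}$, which lies in $Y^{bo}$ because by IH $\tilde A' \in Y^{bo}$, hence $\tovunder{\tilde A'}{B'} \in Y^{be}$).

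Equipped with this auxiliary lemma, the second decomposition of $\tau = \tovunder{\tovunder{A}{B}}{C}$ is assembled by applying it to $X = \tovunder{A}{B}$ to produce $\sigma, X_1 \in Y^{be}$, then applying IH to get the first decomposition of $X_1$ and the second decomposition of $C$. One sets $\rho = \trecb{\rho^{X_1}}{\rho^C} \in Y^b$ and concatenates the substitution data, inserting $\sigma$ at the middle position: the resulting $(\tO, \tover{\sigma'_1}, \ldots, \tO, \tover{\sigma'_{k'}})$ sequence reproduces $\tau$ because substituting $\tover{\sigma}$ at the root of $\rho$ grafts $X = \sigma \triangleleft \tovunder{|}{X_1}$ as its left subtree while keeping $C$ (coming from the $\rho^C$-block) as its right subtree. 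Uniqueness runs in reverse, combining uniqueness of the auxiliary decomposition of $X$ with inductive uniqueness of the decompositions of $X_1$ and $C$.
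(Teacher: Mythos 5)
Your proof is correct and follows the same strategy as the paper, whose own proof consists solely of the sentence ``By induction on the size $|\tau|$ of $\tau$.'' You have simply supplied the details that the paper omits: the simultaneous induction on both decompositions via the ternary splitting $\tau = \tovunder{\tovunder{A}{B}}{C}$ with $A,B,C \in Y^{be}$, the parity bookkeeping for the alternating substitution patterns, and the auxiliary unique factorisation of a $Y^{bo}$-tree by grafting at the leftmost leaf (whose verification that $\tilde A \in Y^{bo}$, and implicitly that $X_1 \in Y^{be}$, is indeed the only nontrivial point).
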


\begin{proof}
By induction on the size $|\tau|$ of $\tau$. 
\end{proof}

\begin{lem}
\label{lem:Y^bp_disjoint_union}
For $n\ge 1$, 
$$
	Y^{be}_{2n} = \bigsqcup_{\tau \in Y_n} \Pi(\tau).
$$
\end{lem}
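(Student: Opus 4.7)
The plan is to induct on $n$, pairing the recursive definition of $\Pi$ with Lemma \ref{lem:Y^bp_two_decomp}, which is the key tool. It is convenient first to observe that the statement also holds for $n = 0$: $Y^{be}_0 = \{|\} = \Pi(|)$ and $Y_0 = \{|\}$. This lets me freely apply the inductive hypothesis to sub-trees of any non-negative size. The base case $n = 1$ is also immediate, since $Y^{be}_2 = Y^b_2 = \Pi(\tO)$ and $Y_1 = \{\tO\}$.

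For the inductive step, I fix $\tau \in Y^{be}_{2n}$ and invoke the first half of Lemma \ref{lem:Y^bp_two_decomp} to obtain the unique decomposition
\[
	\tau = \rho \circ \bigl(\tover{\sigma_1}, \ \tO, \ \ldots, \ \tover{\sigma_k}, \ \tO\bigr),
\]
with $\rho \in Y^b_{2k}$ and each $\sigma_i \in Y^{be}_{2m_i}$. Counting internal vertices through the operadic substitution gives
\[
	2n \;=\; \sum_{i=1}^{k} |\tover{\sigma_i}| + k\cdot |\tO| \;=\; \sum_{i=1}^{k}(2m_i + 1) + k \;=\; 2\sum_{i=1}^{k} m_i + 2k,
\]
so $n = k + \sum_i m_i$, hence $k \ge 1$ and $m_i < n$. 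The inductive hypothesis provides a unique $\tilde{\tau}_i \in Y_{m_i}$ with $\sigma_i \in \Pi(\tilde{\tau}_i)$. Setting
\[
	\tilde{\tau} \;:=\; \toverrightcomb{\tilde{\tau}_1}{\tilde{\tau}_2}{\tilde{\tau}_k} \;\in\; Y_n
\]
(using the decomposition from Lemma \ref{lem:cat_pair_other_decomp} applied to the Catalan pair $Y$), the very definition of $\Pi$ then yields $\tau \in \Pi(\tilde{\tau})$, which proves existence.

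For disjointness, suppose $\tau \in \Pi(\tilde{\tau}) \cap \Pi(\tilde{\tau}')$ with $\tilde{\tau}, \tilde{\tau}' \in Y_n$ decomposing as $\toverrightcomb{\tilde{\tau}_1}{\tilde{\tau}_2}{\tilde{\tau}_k}$ and $\toverrightcomb{\tilde{\tau}'_1}{\tilde{\tau}'_2}{\tilde{\tau}'_{k'}}$. Unfolding the two $\Pi$-memberships exhibits $\tau$ in the shape of Lemma \ref{lem:Y^bp_two_decomp} in two ways; the uniqueness clause in that lemma forces $k = k'$, the two $\rho$'s to coincide, and $\sigma_i = \sigma'_i$ for each $i$. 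The inductive hypothesis applied to each $\sigma_i \in \Pi(\tilde{\tau}_i) \cap \Pi(\tilde{\tau}'_i)$ then gives $\tilde{\tau}_i = \tilde{\tau}'_i$, so $\tilde{\tau} = \tilde{\tau}'$.

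The main obstacle is essentially bookkeeping around Lemma \ref{lem:Y^bp_two_decomp}: once one matches the shape of the first decomposition there against the defining recursion of $\Pi$, the induction closes cleanly because each inserted pair $(\tover{\sigma_i}, \tO)$ contributes exactly $2(m_i + 1)$ internal vertices, which is precisely the doubling built into $Y^{be}_{2n}$ versus $Y_n$.
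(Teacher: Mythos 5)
Your proof is correct and follows the same route as the paper's: an induction on $n$ driven by the first (unique) decomposition in Lemma~\ref{lem:Y^bp_two_decomp}, matched against the defining recursion of $\Pi$. You simply spell out what the paper leaves implicit (the vertex count $n=k+\sum_i m_i$, the existence and the disjointness halves separately, and the convenient extension to $n=0$), all of which checks out.
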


\begin{proof}
It can be seen by an induction on $n$, using the first part of the previous lemma.
\end{proof}

Therefore it makes sense that the sum over $\sigma\in Y^{be}_{2n}$ in \eqref{eq:sum_kab_ka_kb} can be decomposed as a double sum over $\tau\in Y_n$ and $\sigma \in \Pi(\tau)$. It now only remains to show that the sum over $\Pi(\tau)$ correspond to $k^{ab}_\tau$ on the right-hand-side of \eqref{eq:sum_kab_ka_kb}.

\begin{lem}
\label{lem:extract_sum_Y^bp_convol}
Let $f,g,h \in\mathrm{Mult}[[B]]$, assume that for all $n\ge 0$, $x_1, \ldots, x_n \in B$,
\begin{equation}
	\sum_{\tau\in Y_n} h_\tau(x_1, \ldots, x_n) 
	= \sum_{\sigma\in Y^{be}_{2n}} (f\cup g)_\sigma(x_1, 1, x_2, 1, \ldots, x_n, 1). \label{eq:hyp_lem_extract_sum}
\end{equation}
Then for all $n\ge 0$, $\tau\in Y_n$, $x_1, \ldots, x_n\in B$, 
\begin{equation}
	h_\tau(x_1, \ldots, x_n) 
	= \sum_{\sigma \in \Pi(\tau)} (f\cup g)_\sigma(x_1, 1, x_2, 1, \ldots, x_n, 1). \label{eq:lem_extract_sum}
\end{equation}
In particular,
$$
	h_n(x_1, \ldots, x_n) = \sum_{\sigma \in Y_n} (f\cup g)_{R(\sigma)}(x_1, 1, x_2, 1, \ldots, x_n, 1)
$$
\end{lem}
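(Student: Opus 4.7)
The strategy is strong induction on $n$. Writing $A_\tau := h_\tau(x_1,\ldots,x_n)$ and $B_\tau := \sum_{\sigma \in \Pi(\tau)}(f\cup g)_\sigma(x_1,1,\ldots,x_n,1)$, I aim to show that $A_\tau = B_\tau$ for every $\tau \in Y_n$. The ``in particular'' conclusion then reduces to the case $\tau = \trightcomb n$, since $h_{\trightcomb n}=h_n$ and $\Pi(\trightcomb n) = Y^b_{2n} = R(Y_n)$ from the recursive definition of $\Pi$.

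The base case $n=0$ is immediate, as $Y_0 = \{|\}$, $\Pi(|) = \{|\}$, and both $h_{|}$ and $(f\cup g)_{|}$ equal $1$. For the inductive step with $n \ge 1$, I assume the conclusion at every level strictly less than $n$ and split on whether $\tau$ is the full right-comb. Consider first a non-comb tree $\tau = \toverrightcomb{\tau_1}{\tau_2}{\tau_k} \in Y_n$; the hypothesis $\tau \neq \trightcomb n$ is equivalent to $k<n$, and since $n = k + \sum_i |\tau_i|$ with $k \geq 1$ this forces $|\tau_i|<n$ for every $i$. By Definition \ref{def:Pi_tau_subset_Y^be}, each $\sigma \in \Pi(\tau)$ decomposes uniquely as $\rho \circ (\tover{\sigma_1},\tO,\ldots,\tover{\sigma_k},\tO)$ with $\rho \in Y^b_{2k} \subseteq Y^{be}_{2k}$ and $\sigma_i \in \Pi(\tau_i)$.

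The computation then proceeds in three moves. First, rewriting $\tO = \tover{|}$ and using $(g \cup f)_{|}=1$, part (1) of Lemma \ref{lem:compo_multilinear_series_trees_bis} gives
$$(f\cup g)_{\rho \circ (\tover{\sigma_1},\tover{|},\ldots,\tover{\sigma_k},\tover{|})}(x_1,1,\ldots,x_n,1) = (f\cup g)_\rho\bigl((f\cup g)_{\sigma_1}(\ldots)x_{j_1},\,1,\,\ldots,\,(f\cup g)_{\sigma_k}(\ldots)x_n,\,1\bigr).$$
Second, multilinearity of $(f\cup g)_\rho$ allows the sums over $\sigma_i \in \Pi(\tau_i)$ to be pushed inside the appropriate slots, and the inductive hypothesis at levels $|\tau_i|<n$ collapses each inner sum to $h_{\tau_i}(\ldots)$. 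Third, using $Y^b_{2k} = R(Y_k)$, the remaining outer sum $\sum_{\mu\in Y_k}(f\cup g)_{R(\mu)}(y_1,1,\ldots,y_k,1)$ equals $h_k(y_1,\ldots,y_k)$ by the inductive hypothesis applied at level $k<n$ to the comb $\trightcomb k$ (which is the ``in particular'' conclusion at that lower level). Substituting $y_i = h_{\tau_i}(\ldots)x_{j_i}$ identifies $B_\tau$ with $h_\tau(x_1,\ldots,x_n) = A_\tau$ via Definition \ref{def:treenesting}.

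The delicate remaining case is $\tau = \trightcomb n$, for which no proper subtree decomposition escapes the comb itself, so the preceding recursion gives nothing. Here I invoke the global hypothesis \eqref{eq:hyp_lem_extract_sum} at level $n$: its right-hand side equals $\sum_{\tau \in Y_n}B_\tau$ by Lemma \ref{lem:Y^bp_disjoint_union}, and cancelling the equalities $A_\tau = B_\tau$ already established for every $\tau \neq \trightcomb n$ isolates $A_{\trightcomb n} = B_{\trightcomb n}$, closing the induction. This ``recover the comb case by subtraction from the global identity'' step is the real content of the argument and also the main obstacle, since the hypothesis is used only once and only to pin down this last tree; everything else is a mechanical unfolding of definitions combined with Lemma \ref{lem:compo_multilinear_series_trees_bis}.
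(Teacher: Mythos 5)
Your proof is correct and follows essentially the same route as the paper's: strong induction on $n$, handling non-comb trees via the unique decomposition of $\Pi(\tau)$, Lemma \ref{lem:compo_multilinear_series_trees_bis} (with the even slots filled by $|$), multilinearity, and the inductive hypothesis both on the $\tau_i$ and, in its ``in particular'' form, at level $k<n$; then recovering the comb case by subtracting from the global hypothesis via Lemma \ref{lem:Y^bp_disjoint_union}. The only difference is that you run the chain of equalities for the non-comb case from the $\Pi(\tau)$-sum back to $h_\tau$ rather than the other way around, which is immaterial.
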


\begin{proof}
The proof is again an induction on $n$. Let us assume that the result is known for all $k<n$, we will first show that relation (\mbox{\ref{eq:lem_extract_sum}\strut}) holds for $\tau \neq \trightcomb n$, and then for $\tau = \trightcomb n$. If $\tau \neq \trightcomb n$, we can write $\tau = \toverrightcomb{\tau_1}{\tau_2}{\tau_k}$ with $k<n$ and $|\tau_i|<n$. Therefore by induction
\begin{align*}
	h_\tau(x_1, \ldots, x_n) 
	&= h_k(h_{\tau_1}(x_1, \ldots, x_{|\tau_1|})x_{|\tau_1|+1}, \ldots, h_{\tau_1}(x_{n-|\tau_k|}, \ldots, x_{n-1})x_n) \nonumber\\ 
	&{\mkern-80mu}= \sum_{\rho \in Y^b_{2k}} (f\cup g)_\rho \Big(h_{\tau_1}(x_1, \ldots, x_{|\tau_1|})x_{|\tau_1|+1}, 1, \ldots, h_{\tau_1}(x_{n-|\tau_k|}, \ldots, x_{n-1})x_n, 1\Big) \nonumber\\ 
	&{\mkern-80mu}= \sum_{\rho \in Y^b_{2k}} (f\cup g)_\rho \Big(\sum_{\sigma_1 \in \Pi(\tau_1)} (f\cup g)_{\sigma_1} (x_1, 1, \ldots, x_{|\tau_1|}, 1)x_{|\tau_1|+1}, 1, \ldots,\\
	&\hspace{5cm} \sum_{\sigma_k \in \Pi(\tau_k)} (f\cup g)_{\sigma_k} (x_{n-|\tau_k|}, 1, \ldots, x_{n-1}, 1) x_n, 1\Big) \nonumber\\ 
	&{\mkern-80mu}= \sum_{\rho \in Y^b_{2k}} \sum_{\sigma_i \in \Pi(\tau_i)} (f\cup g)_\rho \Big((f\cup g)_{\sigma_1} (x_1, 1, \ldots, x_{|\tau_1|}, 1)x_{|\tau_1|+1}, 1, \ldots, (f\cup g)_{\sigma_k} (x_{n-|\tau_k|}, 1, \ldots, x_{n-1}, 1) x_n, 1\Big) \nonumber\\ 
	&{\mkern-80mu}= \sum_{\rho \in Y^b_{2k}} \sum_{\sigma_i \in \Pi(\tau_i)} (f\cup g)_{\rho \circ \big(\smtover{\sigma_1}, \ \smtO, \ \ldots, \ \smtover{\sigma_k}, \ \smtO \ \big)}(x_1, 1 \ldots, x_n, 1) \nonumber\\
	&{\mkern-80mu}= \sum_{\sigma \in \Pi(\tau)} (f\cup g)_\sigma(x_1, 1, x_2, 1, \ldots, x_n, 1),
\end{align*}
where the second to last equality is a special case of Lemma \ref{lem:compo_multilinear_series_trees_bis}, with $y_i=1$ for all $i$ and $\sigma_j=|$ for $j$ even. Hence equality (\mbox{\ref{eq:lem_extract_sum}\strut}). Then, for $\tau = \trightcomb n$, it suffices to subtract 
$$
	\sum_{\tau\in Y_n, \tau \neq \smtrightcomb n} h_\tau(x_1, \ldots, x_n) 
	= \sum_{\tau\in Y_n, \tau \neq \smtrightcomb n} \ \sum_{\sigma \in \Pi(\tau)} (f\cup g)_\sigma(x_1, 1, x_2, 1, \ldots, x_n, 1)
$$
from both sides of the assumption \eqref{eq:hyp_lem_extract_sum}, and the desired equality appears thanks to Lemma \ref{lem:Y^bp_disjoint_union}. 
\end{proof}

Finally, Proposition \ref{prop:formula_cum_product_free_vars} follows directly from this last Lemma applied to equality \ref{eq:sum_kab_ka_kb}.


\appendix


\section{Other bijections arising from Catalan pairs}
\label{app:other_bij}

\allowdisplaybreaks 

\begin{defn}
\begin{enumerate}
	\item Let $\PT$ be the set of planar rooted trees (or ordered rooted trees), not including the empty tree. For $n\ge 0$, let $\PT_n$ be the set of trees $\tau\in \PT$ with $n$ \textit{edges}.
	
	\item Let $\ST\subseteq \PT$ be the subset of \textit{Schröder trees}, containing trees $t \in \PT$ such that each internal vertex has at least two children. For $\tau\in \ST$, we say that $\tau$ is a \textit{left (resp.~right) Schröder tree} if each leftmost (resp.~rightmost) child of each internal vertex is a leaf. Let $\LST$ (resp.~$\RST$) be the set of left (resp.~right) Schröder trees.
	
	\item Let $\NDPF$ be the set of non-decreasing parking functions, i.e., maps $a:[n]\to [n]$ that are non-decreasing and such that $a(i) \le i$.
\end{enumerate}
\end{defn}

\begin{defn}
Let us define Catalan pairs over $Y$, $\NCP$, $\PT$, $\RST$, $\LST$ and $\NDPF$ by defining their Catalan maps:
\allowdisplaybreaks 
\begin{equation*}
\def\arraystretch{1.4}
\begin{array}{lcccl}
 (Y)\hspace{60pt} & (\sigma, \tau) &\mapsto& \tovunder \sigma \tau &\hspace{150pt} \\
 (Y') & (\sigma, \tau) &\mapsto& \tovunder \tau \sigma \\
	 (\NCP_1) & (P, Q) & \mapsto & P * | \underline * Q \\
	 (\NCP_2) & (P, Q) & \mapsto & P \overline * | * Q \\
	 (\NCP_3) & (P, Q) & \mapsto & (| * P)  \overline * Q \\
	 (\NCP_4) & (P, Q) & \mapsto & (| \underline * P) * Q \\
	 (\NCP_5) & (P, Q) & \mapsto & P * (Q \overline * |) \\
	 (\NCP_6) & (P, Q) & \mapsto & P \underline * (Q * |) \\
	 (\NCP_7) & (P, Q) & \mapsto & (| \underline * K(P)) * Q \\
	 (\NCP_8) & (P, Q) & \mapsto & P * | \underline * K(Q) \\
	 (\PT_1) & (\sigma, \tau) & \mapsto & B_+(\sigma, \tau_1, \ldots, \tau_k) &\text{ where }\tau = B_+(\tau_1, \ldots, \tau_k) \\ 
	 (\PT_2) & (\sigma, \tau) & \mapsto & B_+(\sigma_1, \ldots, \sigma_k, \tau) &\text{ where }\sigma = B_+(\sigma_1, \ldots, \sigma_k) \\ 
	 (\RST_1) & (\sigma, \tau) & \mapsto & B_+(\sigma, \tau_1, \ldots, \tau_k, \onet) &\text{ where }\tau = B_+(\tau_1, \ldots, \tau_k, \onet) \\
	 (\RST_2) & (\sigma, \tau) & \mapsto & B_+(\sigma_1, \ldots, \sigma_k, \tau, \onet) &\text{ where }\sigma = B_+(\onet, \sigma_1, \ldots, \sigma_k) \\
	 (\LST_1) & (\sigma, \tau) & \mapsto & B_+(\onet, \sigma_1, \ldots, \sigma_k, \tau) &\text{ where }\sigma = B_+(\onet, \sigma_1, \ldots, \sigma_k) \\
	 (\LST_2) & (\sigma, \tau) & \mapsto & B_+(\onet, \sigma, \tau_1, \ldots, \tau_k) &\text{ where }\tau = B_+(\tau_1, \ldots, \tau_k, \onet) \\
	 (\NDPF) & (u, v) & \mapsto & (u_1, \ldots, u_k, k+1, v_1 + k, \ldots, v_l+k) & \text{ where } u = (u_1, \ldots, u_k), v = (v_1, \ldots, v_l)\\
\end{array}
\end{equation*}
Moreover, for every Catalan pair $(C,f)$, we can construct another Catalan pair $(C,f')$ with $f' : (x, y) \mapsto f(y,x)$, as it is done with $(Y)$ and $(Y')$.
\end{defn}

Then each pair of Catalan pairs gives rise to a bijection between the underlying sets. We can can construct most of the known maps like this.

\begin{ex}
\begin{itemize}
	\item $(Y) \to (\PT_1)$ gives Knuth's correspondence (or rotation map).
	\item $(Y) \to (\NCP_1)$ gives $\varphi$.
	\item $(Y) \to (Y')$ gives the mirroring map.
	\item $(Y) \to (\NCP_3)$ gives the bijection from Edelman in \cite{edelman_bij}.
	\item $(\PT_1) \to (\NCP_4)$ and $(\PT_2)\to (\NCP_1)$ give Prodinger's bijection \cite{prodinger}. 
	\item $(\PT_2) \to (\NCP_2)$ and $(\PT_1) \to (\NCP_3)$ give the bijection from Dershowitz and Zaks \cite{dershowitz_zaks}.
	\item $(\NCP_2) \to (\NCP_1)$, $(\NCP_3) \to (\NCP_4)$ and $(\NCP_5) \to (\NCP_6)$ give the Kreweras complement.
	\item $(\PT_1) \to (\RST_1)$ and $(\PT_2) \to (\RST_2)$ are "add a rightmost child to every internal vertex".
	\item $(\PT_2) \to (\LST_1)$ and $(\PT_1) \to (\LST_2)$ are "add a leftmost child to every internal vertex".
	\item $(\RST_1) \to (\NCP_1)$, $(\RST_2)\to (\NCP_6)$, $(\LST_1)\to (\NCP_3)$ and $(\LST_2)\to (\NCP_2)$ are the "gaps" maps considered by Arizmendi and Celestino in \cite{arizmendi_celestino_schroder}, and they can be extended to a map $\ST \to \NCP$.
	\item $(\NCP_7) \to (\PT_1)$ and $(\NCP_8) \to (\PT_2)$ give the map from Bernardi in \cite{bernardi}.
\end{itemize}
\end{ex}

\begin{coroll}
The following diagrams commute (all maps are bijections):
$$
\begin{tikzcd}
\PT \arrow[r, "\rot"] \arrow[d, "R"] & Y \arrow[d, "\varphi"] \\
\RST \arrow[r, "\gaps"] & \NCP
\end{tikzcd}
\hspace{30pt}
\begin{tikzcd}
\PT \arrow[r, "\rot"] \arrow[d, "L"] & Y \arrow[d, "\Edel"] \\
\LST \arrow[r, "\gaps"] & \NCP
\end{tikzcd}
\hspace{30pt}
\begin{tikzcd}
\PT \arrow[r, "\rot"] \arrow[d, "\Prod"] & Y \arrow[d, "\Edel"] \\
\NCP \arrow[r, <-, "K"] & \NCP
\end{tikzcd}$$
\end{coroll}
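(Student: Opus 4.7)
The plan is to invoke Lemma \ref{lem:isom_Cat_pairs}: two Catalan pairs admit a unique isomorphism between them. For each of the three diagrams, I would equip the four underlying sets at the corners with Catalan pair structures (drawn from the Definition preceding the Example), so that every displayed arrow is identified, via the Example, with the unique isomorphism of Catalan pairs between the chosen source and target Catalan pairs. The two compositions around the diagram are then compositions of Catalan pair isomorphisms between the same source and target Catalan structures, and hence coincide by Lemma \ref{lem:isom_Cat_pairs}.

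Concretely, for the first diagram I equip $\PT$, $Y$, $\RST$ and $\NCP$ with the structures $(\PT_1)$, $(Y)$, $(\RST_1)$ and $(\NCP_1)$; then $\rot$, $\varphi$, $R$ and $\gaps$ are the unique isomorphisms $(\PT_1)\to(Y)$, $(Y)\to(\NCP_1)$, $(\PT_1)\to(\RST_1)$ and $(\RST_1)\to(\NCP_1)$ respectively, so that both paths around the diagram compute the unique isomorphism $(\PT_1)\to(\NCP_1)$. For the third diagram, I equip $\PT$, $Y$ and the two copies of $\NCP$ with $(\PT_1)$, $(Y)$, $(\NCP_4)$ and $(\NCP_3)$; then $\rot$, $\Edel$, $\Prod$ and $K$ are recognized as the unique isomorphisms $(\PT_1)\to(Y)$, $(Y)\to(\NCP_3)$, $(\PT_1)\to(\NCP_4)$ and $(\NCP_3)\to(\NCP_4)$, and both paths give the unique isomorphism $(\PT_1)\to(\NCP_4)$. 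The second diagram is handled analogously, using the structures $(\PT_2)$, $(Y)$, $(\LST_1)$ and $(\NCP_3)$, with the identifications of $\rot$, $L$, $\Edel$ and $\gaps$ taken from the Example.

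The main obstacle is the bookkeeping: each specific bijection (such as $R$, $L$, $K$ or $\Prod$) serves as a Catalan pair isomorphism between only a matched pair of Catalan structures (once the source structure is fixed, the target structure is determined), and the Example often lists several possible identifications for the same bijection. Consequently, for each diagram one must sift through the available identifications to find a consistent choice of structures at all four corners so that every arrow is simultaneously recognized as a Catalan pair isomorphism. Once this matching has been carried out, the commutativity of each diagram follows at once from the uniqueness statement in Lemma \ref{lem:isom_Cat_pairs}.
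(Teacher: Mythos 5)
Your overall strategy is exactly the one the paper intends: each arrow is recognized, via the Example, as the unique isomorphism of Catalan pairs between suitably chosen structures on its source and target, and commutativity then follows from the uniqueness statement of Lemma \ref{lem:isom_Cat_pairs}. For the first and third squares your bookkeeping is correct and every identification you invoke is among those listed: both paths around the first square compute the unique isomorphism $(\PT_1)\to(\NCP_1)$, and both paths around the third compute the unique isomorphism $(\PT_1)\to(\NCP_4)$.

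The second square is where the argument has a genuine gap. You place the structure $(\PT_2)$ at the top-left corner and then treat $\rot$ as the unique Catalan isomorphism $(\PT_2)\to(Y)$. This identification is not in the Example, which exhibits $\rot$ only as the isomorphism between $(\PT_1)$ and $(Y)$, and it cannot be added for free: a fixed bijection $\phi:\PT\to Y$ is a Catalan isomorphism onto $(Y,\mathrm{Cat}_Y)$ from at most one Catalan structure on $\PT$, namely the pullback $\phi^{-1}\circ \mathrm{Cat}_Y\circ(\phi\times\phi)$, and the Catalan maps of $(\PT_1)$ and $(\PT_2)$ already differ on $\PT_0\times\PT_1$ (one produces the cherry, the other the path). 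Hence $\rot$ cannot simultaneously be the isomorphism $(\PT_1)\to(Y)$ and $(\PT_2)\to(Y)$. Moreover, the problem is not cured by re-sifting the available identifications: if you instead put $(\PT_1)$ at the top-left (as $\rot$ requires), then the Example identifies $L$ only as $(\PT_1)\to(\LST_2)$ and $\gaps$ only as $(\LST_2)\to(\NCP_2)$, so the bottom path lands in $(\NCP_2)$, while $\Edel$ forces $(\NCP_3)$ at the bottom-right corner; and $(\NCP_2)\neq(\NCP_3)$, since their Catalan maps already differ on the pair $(|,\emptyset)$, giving the one-block and the two-singleton partition of $[2]$ respectively. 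So no labelling of the second square drawn solely from the listed identifications makes all four arrows Catalan isomorphisms at once, and its commutativity requires an additional input --- either a further identification of one of the four maps as a Catalan isomorphism between another pair of structures, or a direct verification --- which your proposal does not supply.
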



\section{An operadic point of view}
\label{app:operad_pt_view}

In this appendix we give an alternative definition of the maps $f_\tau$ via an operad morphism from $\overline Y$ to the set $\End_{T(B)}$, that we define below. It follows the same spirit as the operad morphism $\NCP \to \End_B$ in \cite{T-transf}, and in a way generalizes it, since the gap-insertion operad structure on $\NCP$ \cite{gap-insertion-operad} can be seen as a suboperad of $\overline Y$.

We note that an operadic viewpoint on noncrossing partition is not new, and appears for example in the amalgamation of free probability in \cite{guionnet}.

\begin{defn}
Consider the (non unitary) tensor algebra $T(B) := \oplus_{n\ge 1} B^{\otimes n}$. Elements of $T(B)^{\otimes k}$ will be written $x_1\otimes x_2 \otimes \cdots \otimes x_{n_1} | x_{n_1+1} \otimes \cdots \otimes x_{n_2}| \cdots | x_{n_{k-1}+1} \otimes \cdots \otimes x_{n_k}$. Consider the endomorphism operad $\End_{T(B)}$, where the $k$-th level are $\bb K$-linear maps $T(B)^{\otimes k} \to T(B)$, and composition is the natural one: Given $\mathcal F:T(B)^{\otimes k} \to T(B)$, $\mathcal G_1, \ldots \mathcal G_k \in \End_{T(B)}$ with $\mathcal G_i:T(B)^{\otimes j_i} \to T(B)$, the composition $\mathcal F\circ(\mathcal G_1, \ldots, \mathcal G_k) : T(B)^{\otimes (j_1 + \cdots + j_k)} \to T(B)$ is given by 
$$
	\mathcal F\circ(\mathcal G_1, \ldots, \mathcal G_k) (x_1| \ldots| x_{j_1 + \cdots + j_k}) 
	:= \mathcal F\Big( \mathcal G_1(x_1| \ldots |x_{j_1})~| \cdots|~ \mathcal G_k(x_{j_1 + \cdots + j_{k-1}+1}| \ldots |x_{j_1 + \cdots + j_k}) \Big)
$$

$T(B)$ is given a left-$B$-module structure by multiplication on the left on the first factor: $b(x_1\otimes \cdots \otimes x_n) = (bx_1)\otimes x_2 \otimes \cdots \otimes x_n$. Given a morphism of $B$-modules $f:T(B)\to B$, we can consider the operad morphism $\Phi_f : \overline Y \to \End_{T(B)}$ defined by 
\begin{align*}
	\Phi_f(\tA) &= \Big(u|v \mapsto f(u)v \Big) \\
	\Phi_f(\tB) &= \Big(u|v \mapsto u\otimes v \Big)
\end{align*}
It can be shown that the map $\Phi_f$ is well defined. Indeed, the operad $\overline Y$ is isomorphic to the duplicial operad (see \cite{frabetti}, page 390). Let us recall that the latter has a presentation with two generators, corresponding to $\tA$ and $\tB$ in $\overline Y$, and three relations, namely
\begin{align*}
	\tA \circ(\tA, \tO) &= \tA \circ(\tO, \tA), \\ 
	\tB \circ(\tB, \tO) &= \tB \circ(\tO, \tB), \\ 
	\tB \circ(\tA, \tO) &= \tA \circ(\tO, \tB).
\end{align*}
Moreover, one can compute the three relations in the operad $\End_{T(B)}$:
\begin{align*}
	\Phi_f(\tA) \circ (\Phi_f(\tA), \Phi_f(\tO)) &= \Phi_f(\tA) \circ (\Phi_f(\tO), \Phi_f(\tA)) &&: \Big(u|v|w \mapsto f(u)f(v)w \Big), \\ 
	\Phi_f(\tB) \circ (\Phi_f(\tB), \Phi_f(\tO)) &= \Phi_f(\tB) \circ (\Phi_f(\tO), \Phi_f(\tB)) &&: \Big(u|v|w \mapsto u\otimes v\otimes w \Big), \\ 
	\Phi_f(\tB) \circ (\Phi_f(\tA), \Phi_f(\tO)) &= \Phi_f(\tA) \circ (\Phi_f(\tO), \Phi_f(\tB)) &&: \Big(u|v|w \mapsto f(u)v\otimes w \Big),
\end{align*}
which therefore guarantee that $\Phi_f$ is well defined.

Note that if $f\in I\cdot \Mult[[B]]$, then $f$ can be seen as a morphism of $B$-modules $T(B)\to B$, with $f(x_1\otimes \cdots \otimes x_n) = f_n(x_1, \ldots, x_n)$. Indeed, it is left-$B$-linear since 
\begin{align*}
	f(b(x_1\otimes \cdots \otimes x_n)) 
	&= f((bx_1)\otimes\cdots\otimes x_n) \\
	&= f_n(bx_1, \ldots, x_n) = bx_1 f_n(1, \ldots, x_n) \\
	&= b f_n(x_1, \ldots, x_n) = bf(x_1\otimes\cdots\otimes x_n)
\end{align*}
for $b, x_1, \ldots, x_n \in B$.
\end{defn}

\begin{lem}
\label{lem:f_tau_operad_definition}
For $f\in I\cdot \mathrm{Mult}[[B]]$, $\tau\in Y_n$, and the word $u = x_1\otimes \cdots\otimes x_n \in T(B)$, we have
$$
	f_\tau(x_1, \ldots, x_n) = f(\Phi_f(\tau)(x_1|\cdots | x_n)).
$$
\end{lem}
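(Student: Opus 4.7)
The plan is to proceed by induction on $|\tau|$. Because $\Phi_f(\tau)(x_1|\cdots|x_n)$ is a tensor in $T(B)$ whose precise form enters the inductive step (not merely its image under $f$), I would first prove a stronger claim: writing $\tau$ in right-comb form $\tau = \toverrightcomb{\tau_1}{\tau_2}{\tau_k}$ with $j_i := \sum_{l=1}^i(|\tau_l|+1)$,
\[
\Phi_f(\tau)(x_1|\cdots|x_n) \;=\; \bigotimes_{i=1}^{k}\bigl(f_{\tau_i}(x_{j_{i-1}+1},\ldots,x_{j_i-1})\,x_{j_i}\bigr)\;\in\;B^{\otimes k}\subset T(B).
\]
Once this is established, the lemma follows immediately: $f$ acts on $B^{\otimes k}$ as $f_k$, and comparing with the recursive definition of $f_\tau$ yields $f(\Phi_f(\tau)(x_1|\cdots|x_n)) = f_\tau(x_1,\ldots,x_n)$.

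The base case $\tau = \tO$ is trivial, since $\Phi_f(\tO)=\mathrm{id}$ and the right-hand side reduces to $f_{|}()\cdot x_1 = x_1$. For $|\tau|\ge 2$, I would decompose $\tau = \tovunder{\tau_1}{\rho}$ and translate this into operadic expressions in $\overline Y$: when $\rho = |$ one has $\tau = \tA\circ(\tau_1,\tO)$, and when $\rho\neq|$ (so $\rho = \toverrightcomb{\tau_2}{\cdots}{\tau_k}$) one has $\tau = \tA\circ\bigl(\tau_1,\tB\circ(\tO,\rho)\bigr)$, both verified by a direct vertex-substitution calculation in $\overline Y$. Applying the operad-morphism property of $\Phi_f$ in the second case gives
\[
\Phi_f(\tau)(x_1|\cdots|x_n) \;=\; f\bigl(\Phi_f(\tau_1)(x_1|\cdots|x_{|\tau_1|})\bigr)\cdot\bigl(x_{|\tau_1|+1}\otimes\Phi_f(\rho)(x_{|\tau_1|+2}|\cdots|x_n)\bigr).
\]
Then the inductive hypothesis (the lemma's conclusion for $\tau_1$ and the strengthened tensor claim for $\rho$), combined with the left-$B$-module rule $b\cdot(x\otimes y)=(bx)\otimes y$ in $T(B)$, rearranges the right-hand side into the claimed tensor product $\bigotimes_{i=1}^{k}\bigl(f_{\tau_i}(\ldots)\,x_{j_i}\bigr)$. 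The case $\rho = |$ is analogous and simpler.

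The subtle point is the operadic identity $\tovunder{\tau_1}{\rho} = \tA\circ\bigl(\tau_1,\tB\circ(\tO,\rho)\bigr)$, which requires care with the convention that substituting a tree at a vertex attaches the vertex's original left and right children at the leftmost and rightmost leaves of the substituted tree. An equivalent route that bypasses this bookkeeping is to \emph{define} $\Psi:\overline Y\to\End_{T(B)}$ via the tensor formula above, check that $\Psi$ coincides with $\Phi_f$ on the generators $\tO,\tA,\tB$, and verify that $\Psi$ satisfies the three duplicial relations recalled in the appendix; uniqueness of the operad morphism determined by these generators then forces $\Psi = \Phi_f$, and applying $f$ yields the lemma.
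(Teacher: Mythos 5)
Your proposal is correct and follows essentially the same route as the paper: the paper's proof likewise establishes the strengthened claim that $\Phi_f(\tau)(x_1|\cdots|x_n)$ equals the tensor $\bigotimes_{i=1}^k f_{\tau_i}(\ldots)x_{j_i}$ by induction using the operad-morphism property, and only then applies $f$. The sole (cosmetic) difference lies in the operadic decomposition driving the induction: the paper first computes $\Phi_f$ on right combs and on $\tover{\sigma}=\tA\circ(\sigma,\tO)$ and then substitutes all branches at once via $\tau=\trightcomb{k}\circ\big(\tover{\tau_1},\ldots,\tover{\tau_k}\big)$, whereas you peel off one branch at a time via $\tau=\tA\circ\big(\tau_1,\tB\circ(\tO,\rho)\big)$.
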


\begin{proof}
It is easy to see by induction that 
$$
	\Phi_f\Big(\trightcomb n\Big)(x_1|\cdots | x_n) = x_1\otimes\cdots \otimes x_n.
$$
Moreover, for $\tau\in Y$, $\tover\tau = \tA \circ (\tau, \tO)$ so
\begin{equation}
\label{eq:phi_f_tover_simplification}
	\Phi_f(\tover \tau)(x_1|\cdots | x_n) 
	= \Phi_f(\tA)\Big(\Phi_f(\tau)(x_1|\cdots | x_{n-1}) ~\Big|~ \Phi_f(\tO)(x_n) \Big) 
	= f(\Phi_f(\tau)(x_1|\cdots | x_{n-1}))x_n.
\end{equation}
Then, for $\tau = \toverrightcomb{\tau_1}{\tau_2}{\tau_k} = \trightcomb k \circ (\tover{\tau_1}, \ldots, \tover{\tau_k})$, 
\begin{align*}
	\Phi_f(\tau)(x_1|\cdots | x_n) 
	&= \Phi_f\big(\trightcomb k\big)\Big(\Phi_f(\tover{\tau_1})(x_1|\cdots |x_{j_1}) ~\Big|~ 
	\cdots ~\Big|~ \Phi_f(\tover{\tau_k})(x_{j_{k-1}+1}| \cdots |x_n)\Big)\\ 
	&= \Phi_f(\tover{\tau_1})(x_1|\cdots |x_{j_1}) \cdots \Phi_f(\tover{\tau_k})(x_{j_{k-1}+1}|\cdots |x_n) \\ 
	&= f(\Phi_f(\tau_1)(x_1|\cdots | x_{j_1-1}))x_{j_1} \otimes \cdots \otimes f(\Phi_f(\tau_k)(x_{j_{k-1}+1}|\cdots |x_{n-1})x_n. 
\end{align*}

So, if inductively $f(\Phi_f(\tau_i)(x_{j_{i-1}+1}|\cdots | x_{j_i-1})) = f_{\tau_i}(x_{j_{i-1}+1}, \ldots, x_{j_i-1})$, then 
\begin{align*}
	f\Big(\Phi_f(\tau)(x_1|\cdots | x_n)\Big) 
	&= f\Big( f_{\tau_1}(x_1, \ldots, x_{j_1-1})x_{j_1} \otimes \cdots \otimes f_{\tau_k}(x_{j_{k-1}+1}, \ldots, x_{n-1})x_n \Big) \\ 
	&= f_k\Big( f_{\tau_1}(x_1, \ldots, x_{j_1-1})x_{j_1} , \ldots, f_{\tau_k}(x_{j_{k-1}+1}, \ldots, x_{n-1})x_n \Big) \\ 
	&= f_\tau(x_1, \ldots, x_n).
\end{align*}
\end{proof}

We can prove again Lemma \ref{lem:compo_multilinear_series_trees}, restated here:

\vspace{7pt}
\noindent\textbf{Lemma \ref{lem:compo_multilinear_series_trees}.} \textit{Let $\tau \in Y_k$, $\sigma_1 / \tO = \tover{\sigma_1}, \ldots, \sigma_k / \tO = \tover{\sigma_k}\in Y/\tO$. Let $n = \sum_{i=1}^k |\sigma_i / \tO|$. Then for $f\in I\cdot\mathrm{Mult}[[B]]$ and for all $x_1, \ldots, x_n\in B$, 
$$
	f_{\tau\circ\big(\smtover{\sigma_1},~ \ldots, \smtover{\sigma_k}\big)}(x_1, \ldots, x_n)
	= f_\tau\Big(f_{\sigma_1}(x_1, \ldots, x_{|\sigma_1|})x_{|\sigma_1|+1}, \ldots, 
	~ f_{\sigma_k}(x_{n-|\sigma_k|}, \ldots, x_{n-1})x_n\Big).
$$}
\vspace{7pt}

\begin{proof}
Let us use the notation of omitted indices. Then
\begin{align*}
	&~~f_{\tau\circ\big(\smtover{\sigma_1},~ \ldots, \smtover{\sigma_k}\big)}(x_1, \ldots, x_n) \\
	&= f_{\tau\circ\big(\smtover{\sigma_1},~ \ldots, \smtover{\sigma_k}\big)}(x, \ldots, x) \\
	&= f(\Phi_f(\tau\circ\big(\smtover{\sigma_1},~ \ldots, \smtover{\sigma_k}\big))(x | \cdots | x)) && \text{by Lemma }\ref{lem:f_tau_operad_definition}\\ 
	&= f\Big(\Phi_f(\tau)\circ \big(\Phi_f(\tover{\sigma_1}),~ \ldots, \Phi_f(\tover{\sigma_k})\big))(x | \cdots | x)\Big) && \text{as }\Phi_f \text{ is an operad morphism}\\ 
	&= f\Big(\Phi_f(\tau)\Big(\Phi_f(\tover{\sigma_1})(x| \cdots | x)~| \cdots|~ \Phi_f(\tover{\sigma_k})(x| \cdots | x)\Big)\Big) \\ 
	&= f\Big(\Phi_f(\tau)\Big(f(\Phi_f(\sigma_1)(x| \cdots | x))x~| \cdots|~ f(\Phi_f(\sigma_k)(x| \cdots | x))x\Big)\Big) && \text{by equality }\eqref{eq:phi_f_tover_simplification}\\
	&=f_\tau\Big(f_{\sigma_1}(x, \ldots, x)x, \ldots, 
	~ f_{\sigma_k}(x, \ldots, x)x\Big) && \text{by Lemma }\ref{lem:f_tau_operad_definition}.
\end{align*}
\end{proof}

\section{Conflict of interest and data availability statements}

On behalf of all authors, the corresponding author states that there is no conflict of interest.

\medskip

Data availability statement: does not apply.



\begin{thebibliography}{abcdsfgh}
\bibitem{anderson}
Greg W Anderson, Alice Guionnet, and Ofer Zeitouni.
\textit{An introduction to random matrices.}
Cambridge university press, 2010.

\bibitem{arizmendi_celestino_schroder} 
Octavio Arizmendi and Adrian Celestino. 
\textit{Monotone Cumulant-Moment Formula and Schröder Trees}. 
SIGMA 18 (2022), 073, 22 pages. 
\href{https://arxiv.org/abs/2111.02179}{(arXiv)}.

\bibitem{bengsch} 
Anis Ben Ghorbal and Michael Sch\"urmann, 
\textit{Non-commutative notions of stochastic independence}. 
Mathematical Proceedings of the Cambridge Philosophical Society 133  (2002), 531-561.

\bibitem{bernardi} 
Olivier Bernardi. 
\textit{Bijective counting of tree-rooted maps and shuffles of parenthesis systems}. 
The Electronic Journal of Combinatorics 14 (2007), R9.
\href{https://arxiv.org/abs/math/0601684v1}{(arXiv)}.

\bibitem{dershowitz_zaks} 
Nachum Dershowitz and Shmuel Zaks. 
\textit{Ordered trees and noncrossing partitions}. 
Discrete Mathematics 62 (1986), 215-218. 
\href{https://www.sciencedirect.com/science/article/pii/0012365X86901202}{(online)}.

\bibitem{disanto_catalan_pairs}
Filippo Disanto, Luca Ferrari, Renzo Pinzani, and Simone Rinaldi.
\textit{Catalan pairs: A relational-theoretic approach to Catalan numbers Advances in Applied Mathematics.}
45.4:505-517 (October 2010).

\bibitem{dykema2006strans} 
Ken Dykema. 
\textit{On the S-transform over a Banach algebra}. 
Journal of Functional Analysis 231 (2006), 90-110.
\href{https://arxiv.org/abs/math/0501083}{(online)}.

\bibitem{dykema2007strans} 
Ken Dykema. 
\textit{Multilinear function series and transforms in free probability theory}. 
Advances in Mathematics 208 (2007), 351-407.
\href{https://arxiv.org/abs/math/0504361}{(online)}.

\bibitem{monotone_free_bool_cum_shuffle} 
Kurusch Ebrahimi-Fard and Frédéric Patras. 
\textit{Monotone, free, and boolean cumulants: a shuffle algebra approach}. 
Advances in Mathematics 328 (2018), 112-132. 
\href{https://arxiv.org/abs/1701.06152v2}{(arXiv)}.

\bibitem{gap-insertion-operad} 
Kurusch Ebrahimi-Fard, Loic Foissy, Joachim Kock, and Frédéric Patras,
\textit{Operads of (noncrossing) partitions, interacting bialgebras, and moment-cumulant relations},
Advances in Mathematics 369 (2020), 107170.
\href{https://arxiv.org/abs/1907.01190}{(arXiv)}.

\bibitem{T-transf}
Kurusch Ebrahimi-Fard, Nicolas Gilliers.
\textit{On the twisted factorisation of the T-transform}.
\href{https://arxiv.org/abs/2105.09639}{(arXiv)}

\bibitem{edelman_bij} 
Paul Edelman. 
\textit{Multichains, noncrossing partitions and trees}. 
Discrete Mathematics 40 (1982), 171-179.
\href{https://www.sciencedirect.com/science/article/pii/0012365X82901182}{(online)}.

\bibitem{frabetti} 
Alessandra Frabetti. 
\textit{Groups of tree-expanded series}. 
Journal of Algebra 319 (2008), 377-413. 
\href{https://arxiv.org/abs/math/0703263}{(arXiv)}.

\bibitem{guionnet}
Alice Guionnet, Vaughn F.R. Jones, and Dimitri Shlyakhtenko.
\textit{Random matrices, free probability, planar algebras and subfactors}.
“Quanta of maths”, pp. 201–239, Clay Math. Proc., 11, Amer. Math. Soc., Providence, RI, 2010.

\bibitem{knuth}
Donald E.~Knuth, 
The art of computer programming. Vol.~1. 
Fundamental algorithms,
Addison-Wesley, Reading, MA, 1997.

\bibitem{loday2002}
Jean-Louis Loday.
\textit{Arithmetree}
Journal of Algebra 258 (2002), 275-309.
\href{https://arxiv.org/abs/math/0112034}{(arXiv)}.

\bibitem{MingoSpeicher2017}
James Mingo and Roland Speicher. 
Free Probability and Random Matrices.
Springer New York, NY, 2017.

\bibitem{muraki} 
Naofumi Muraki, 
\textit{The five independences as natural products}.
Infinite Dimensional Analysis, Quantum Probability and Related Topics 6 (2003), 337-371.

\bibitem{nica_speicher_book} 
Alexandru Nica and Roland Speicher. 
Lecture on the Combinatorics of free probability. 
Cambridge University Press, 2006. 
\href{https://www.uni-saarland.de/fileadmin/upload/lehrstuhl/speicher/Nica-Speicher.pdf}{(online)}.

\bibitem{novelli_thibon_varphi}
Jean-Christophe Novelli and Jean-Yves Thibon.
\textit{Noncommutative Symmetric Functions and Lagrange Inversion II: Noncrossing partitions and the Farahat-Higman algebra}.
Advances in Applied Mathematics 140, (2022), 102396.
\href{https://arxiv.org/abs/2106.08257}{(arXiv)}.

\bibitem{prodinger} 
Helmut Prodinger. 
\textit{A correspondence between ordered trees and noncrossing partitions}. 
Discrete Mathematics 46 (1983), 205-206.
\href{https://www.sciencedirect.com/science/article/pii/0012365X83902558?via%3Dihub}{(online)}.

\bibitem{speicher1994multiplicative}
Roland Speicher. 
\textit{Multiplicative functions on the lattice of non-crossing partitions and free convolution}.
Mathematische Annalen 298 (1994), 611-628. 

\bibitem{speicher_short_proof} 
Roland Speicher. 
\textit{A Short Proof for the Twisted Multiplicativity Property of the Operator-Valued S-transform}. 
Complex Analysis and Operator Theory 16: 81  (2022).
\href{https://arxiv.org/abs/2207.03896}{(arXiv)}.

\bibitem{speicher_notes_op_valued} 
Roland Speicher. 
\textit{Non-Commutative Distributions}. 2019. 
\href{https://rolandspeicher.files.wordpress.com/2020/08/nc-distr-notes-speicher-1.pdf}{(Lecture notes)}.

\bibitem{Stanley2015}
Richard Stanley.
Catalan Numbers.
Cambridge University Press, 2015

\bibitem{VDN1992}
Dan Voiculescu, Kenneth Dykema, and Alexandru Nica. 
Free Random Variables.
Centre de Recherches Math\'ematiques Montr\'eal, 1992.

\bibitem{voiculescu1995}
Dan Voiculescu.
{\textit{Operations on certain non-commutative operator-valued random variables}}.
Ast\'erisque 232 (1995), 243-275.

\end{thebibliography}
\end{document}